\providecommand{\keywords}[1]{{\textit{Keywords:}} #1}
\providecommand{\AMS}[1]{{\textit{AMS Classification:} }#1}
\newcommand{\rrbrack}{\rrbracket}
\newcommand{\nrn}{\rightarrow+\infty}
\newcommand{\xrn}{\xrightarrow}
\newcommand{\ER}{\mathbb {R}}
\newcommand{\R}{\mathbb {R}}
\newcommand{\EN}{\mathbb {N}}
\newcommand{\PE}{\mathbb {P}}
\renewcommand{\P}{\mathbb {P}}
\newcommand{\ES}{\mathbb{E}}
\newcommand{\E}{\mathbb{E}}
\newcommand{\EMM}{L}
\newcommand{\ERR}{R}
\newcommand{\psg}{( }
\newcommand{\psd}{) }
\newcommand{\cqfd}{\Box}
\newcommand{\bqn}{\begin{equation}}
\newcommand{\bqne}{\begin{equation*}}
\newcommand{\eqn}{\end{equation}}
\newcommand{\eqne}{\end{equation*}}
\newcommand{\bx}{\bar X}
\newcommand{\gamzero}{\gamma_1}
\newtheorem{theorem}{\textnormal{\bf{T\scriptsize{HEOREM}}}}
\newtheorem{prop}{\textnormal{\bf{P\scriptsize{ROPOSITION}}}}
\newtheorem{lemme}{\textnormal{\bf{L\scriptsize{EMMA}}}}
\theoremstyle{definition}
\newtheorem{definition}{\textnormal{\bf{D}\scriptsize{EFINITION}}}
\theoremstyle{remark}
\newtheorem{Remarque}{\textnormal{\bf{R\scriptsize{EMARK}}}}
\author{Gilles Pag\`es~\thanks{UPMC,  Laboratoire de Probabilit\'es et Mod\`eles al\'eatoires, UMR~7599, case 188, 4, pl. Jussieu, F-75252 Paris Cedex 5, France. E-mail: \texttt{gilles.pages@upmc.fr}}  $\;$and 
Fabien Panloup~\thanks{Institut de Math\'ematiques de Toulouse, Universit\'e Paul Sabatier \& INSA Toulouse, 135, av. de Rangueil, F-31077 Toulouse Cedex 4, France. E-mail: \texttt{fabien.panloup@math.univ-toulouse.fr}}
}
\title{Weighted Multilevel Langevin  Simulation of Invariant Measures}
\begin{document}
\maketitle

\begin{abstract}
We investigate a weighted Multilevel Richardson-Romberg extrapolation for the ergodic approximation of invariant distributions of diffusions adapted from the one  introduced in~\cite{LEPA} for regular Monte Carlo simulation.  In a first result, we prove under  weak confluence assumptions on the diffusion,  that for any integer $R\ge2$,  the procedure allows us  to attain a rate $n^{\frac{R}{2R+1}}$  whereas the original algorithm convergence is at a weak rate $n^{1/3}$. Furthermore, this is achieved without any explosion of the asymptotic variance. In a second part, under stronger confluence assumptions and  with the help of some second order expansions of the asymptotic error, we go deeper in the study by optimizing the choice of the parameters involved by the method. In particular, for a given $\varepsilon>0$, we exhibit some semi-explicit parameters for which the number of iterations of the Euler scheme required to attain a  Mean-Squared Error lower than $\varepsilon^2$ is about $\varepsilon^{-2}\log(\varepsilon^{-1})$. \smallskip

\noindent Finally, we numerically this Multilevel Langevin estimator on several examples including the simple one-dimensional Ornstein-Uhlenbeck process but also on a high dimensional  diffusion  motivated by a statistical problem. These examples  confirm the theoretical  efficiency of the method.

\end{abstract}

\medskip
\noindent \keywords{Ergodic diffusion, Invariant measure, Multilevel, Richardson-Romberg, Monte Carlo.}

\medskip
\noindent \AMS{60J60, 37M25, 65C05.}
\section{Introduction}

%

Let $(X_t)_{t\in [0,T]}$ be the unique strong solution to the stochastic differential equation ($SDE$)
\[
dX_t = b(X_t)dt +\sigma(X_t)dW_t
\]
starting at $X_0$ where $W$ is a  standard $\R^q$-valued standard Brownian motion, independent of $X_0$, both defined on a probability space $(\Omega, {\cal A}, \PE)$, where $b:\R^d\to \R^d$ and $\sigma:\R^d \to {\cal M}(d,q, \ER)$ are  locally Lipschitz continuous functions {with at most linear growth}. 
The process $(X_t)_{t\ge 0}$ is a Markov process and we denote by $\PE_{\mu}$ its distribution starting from $X_0\sim \mu$. 
Let ${\cal L}$ denote  its infinitesimal generator, defined on twice differentiable functions $g:\R^d\to \R$ by
\[
{\cal L}g = (b|\nabla g) +\frac 12 {\rm Tr}\big(\sigma^*D^2g\,\sigma\big).
\]

As soon as there exists a continuously twice differentiable {\em Lyapunov} function $V:\R^d \to \R_+$ such that 
\begin{equation}\label{eq:Lyap}
\sup_{x\in \ER^d}{\cal L}V (x) <+\infty \quad \mbox{ and }\quad \varlimsup_{|x|\to +\infty} {\cal L}V(x) <0,
\end{equation}
then there exists an invariant probability measure $\nu$ for the diffusion in the sense that $X$ is a stationary process under $\PE_{\nu}$, so that $X_t \sim  \nu$ for every t$\,\in \ER_+$. Under appropriate (hypo-)ellipticity assumptions on $\sigma$ or global confluence assumptions ({on this topic, see $e.g.$~\cite{LPPIHP}}), this invariant measure $\nu$ is unique, hence ergodic. In particular,  
\[
\PE_{\nu}(d\omega)\mbox{-}a.s. \quad \nu_t(\omega,d\xi)=\frac 1t \int_0^t\delta_{X_s(\omega)}ds\stackrel{(\ER^d)}{\Longrightarrow} \nu
\]
where $\stackrel{(\ER^d)}{\Longrightarrow} $ denotes weak convergence of distributions on $\ER^d$ {(see $e.g.$~\cite{billingsley-ergodic} or~\cite{krengel-ergodic} for background)}.
We will assume that this uniqueness holds throughout the paper. Under additional assumptions, one shows that the diffusion is {\em stable} in the sense that 
\[
\forall\, x\!\in \ER^d, \;\PE_x(d\omega)\mbox{-}a.s. \quad \nu_t(\omega,d\xi)
\stackrel{(\ER^d)}{\Longrightarrow} \nu.
\]

This $\P_x$-$a.s.$ convergence is ruled by Bhattacharya's  CLT (see~\cite{bhatta82} for detailed assumptions), namely, if $f:\ER^d\rightarrow\ER$ is such that the Poisson equation $f-\nu(f)= -{\cal L}g$ admits a solution, then 
\begin{equation}\label{eq:1}
 \sqrt{t} \big(\nu_t(\omega,f)-\nu(f)\big) \stackrel{(\ER^d)}{\Longrightarrow}  {\cal N}\big(0, \sigma^2(f)\big)
\end{equation}
with $\sigma^2(f) = \int_{\ER^d} |\sigma^*\nabla g|^2 d\nu$ where $\sigma^*$ denotes the transpose matrix of $\sigma$. 

\medskip 
In a series of papers (see $e.g.$~\cite{LP1,LP2,lemaire2,PP1,PP3,panloup1}), the above properties have been exploited in order to compute by ergodic simulation integrals  $\int fd\nu = \E_{\nu}f(X_t)$ or, more generally, $\E_{\nu} F\Big((X_t)_{t\in [0,T]}\Big)$ where $F$ is a (path-dependent) functional defined  on the space ${\cal C}([0,T], \ER^d)$ {(see also~\cite{talay} or~\cite{piccioni-scarlatti} for other references on the topic or more recently~\cite{garcia-trillos})}.

The starting idea is to mimic~\eqref{eq:1}. First we  replace the diffusion $X$ by a discretization scheme with {\em decreasing step}.  To be more precise, we
 consider,  for a given non-increasing sequence of   steps $\gamma_n>0$, $n\ge 1$, the associated Euler scheme with decreasing step defined by
\begin{equation}\label{eq:EulerDec}
\bar X_{n+1}= \bar X_n  +\gamma_{n+1} b(\bar X_n) +\sigma(\bar X_n) \big(W_{\Gamma_n}-W_{\Gamma_{n-1}}\big), \; n\ge 0, \; \bar X_0=X_0,
\end{equation}
where $\Gamma_n = \gamma_1+\ldots+ \gamma_n$, $n\ge 1$.
Then we introduce  (for technical matter to be explained further on)  a {\em weight} sequence $(\eta_n)_{n\ge 1}$ and the related  $\eta$-weighted empirical (or occupation) measures of  the above Euler scheme, namely
\[
\nu^{\eta,\gamma}_n (\omega,dx) = \frac{1}{H_n} \sum_{k=1}^n \eta_{k} \delta_{\bar X_{k-1}(\omega)}.
\]
{The computation of $\nu^{\eta,\gamma}_n(f)$ can be performed  recursively, once noted that that
\begin{equation}\label{eq:crude}
\nu^{\eta,\gamma}_n (f)= \frac{\eta_n}{H_n} f(\bar X_n) + \Big(1-\frac {\eta_n}{H_n}\Big) \nu^{\eta,\gamma}_{n-1} (f), \; \nu^{\eta, \gamma}_0(f)=0.
\end{equation}
}
 It is clear that, in order to let the scheme explore the whole state space $\ER^d$ and to let  the empirical measures take into account new values as $n$ grows, we must require  the pair $(\eta_n, \gamma_n)_{n\ge 1}$ satisfies 
\begin{equation}\label{poidspastendinfty}
H_n := \eta_1 +\cdots +\eta_n \to +\infty \quad\mbox{and}  \quad \Gamma_n:= \gamzero  +\cdots +\gamma_n \to +\infty \quad \mbox{ as } n\to +\infty.
\end{equation}

When $\eta=\gamma$, the $\gamma$-empirical measure $\nu^{\gamma,\gamma}$ is the natural counterpart of $\nu_t$ and one expects that, under natural {\em mean-reverting} assumptions similar to~\eqref{eq:Lyap} (or slightly more stringent), $\PE_x(d\omega)$-$a.s.$ $\nu^{\eta,\gamma}_n (\omega,dx)\stackrel{(\ER^d)}{\Longrightarrow} \nu$ taking advantage of the fact that the step $\gamma_n \downarrow 0$.  
The major difference with  the above continuous time pointwise Birkhoff's ergodic theorem is that, provided $b$, $\sigma$  can be computed easily,  these random measures taken against a function $f$ (computable as well) can in turn  be simulated. This opens the way to simulation based ergodic methods to compute $\nu(f)$. Note that, though we will not go deeper in that direction,   when $\nu = h.\lambda_d$ is absolutely continuous such a method appears as a probabilistic numerical scheme for the resolution of  the stationary Fokker-Planck equation ${\cal L}^*h= 0$ by providing  the values of as many integrals $\int fh\lambda_d $ as required. 

\smallskip
Let us first recall one simple convergence result for the $a.s.$ weak convergence of the weighted empirical measures $(\nu^{\eta, \gamma}_n)_{n\ge 1}$ (see Theorem V.2 borrowed and slightly adapted from~\cite{lemairethese}).

\begin{prop}\label{prop:convergence} Assume $b$ and $\sigma$ satisfy the mean-reverting assumption\\

\smallskip
$\mathbf{(S)}$: There exists a positive ${\cal C}^2$-function $V:\ER^d\rightarrow\ER_+$ and  $\rho\!\in (0,+\infty)$  such that
$$
\lim_{|x|\rightarrow+\infty} \frac{V(x)}{|x|^\rho}=+\infty, \quad |\nabla V|^2\le C V\quad \textnormal{and }\quad \sup_{x\in\ER^d}\|D^2 V(x)\|<+\infty
$$
and  there exist some   real constants $C_{b}>0$, $\alpha>0$  and $\beta\ge 0$  such that:
\begin{align*} 
&\textit{(i)}\quad |b|^2\le C_b V, \quad {\rm Tr}(\sigma\sigma^*)(x)=o\big(V(x)\big)\quad\textnormal{as $|x|\rightarrow+\infty$.  }
&\textit{(ii)}\quad \psg\nabla V| b\psd \le \beta-\alpha V.
\end{align*}
Then $(SDE$) admits at least one invariant distribution $\nu$ and  for every $x\!\in \ER^d$ and $p>0$, $\sup_n \E_x V^p(\bar X_n)<+\infty$.\smallskip

 Assume $\nu $ is the unique invariant measure of $(SDE$). If the pair $(\eta_n, \gamma_n)_{n\ge 1}$ satisfies~\eqref{poidspastendinfty}
\begin{equation}\label{eq:poids-pas}
\sum_{n\ge 2}\frac{1}{H_n} \left(\frac{\eta_n}{\gamma_n}-\frac{\eta_{n-1}}{\gamma_{n-1}}\right)_{+}<+\infty \quad  \mbox{ and }\quad \sum_{n\ge1}\left(\frac{\eta_n}{H_n\sqrt{\gamma_n}}\right)^2 <+\infty
\end{equation}
then,  $\PE_x(d\omega)$-$a.s.$ $\;\nu^{\eta,\gamma}_n (\omega,dx)\stackrel{(\ER^d)}{\Longrightarrow} \nu$. 

Moreover, $ \PE_x\mbox{-} a.s. $, for every $\nu$-$a.s.$ continuous functions $\R^d \to \ER$ with $V$-polynomial growth, 
\begin{equation}\label{eq:asCvgnu}
\nu^{\eta, n}(\omega,f)\to \nu(f) \quad \mbox{ as } \quad n \to+\infty.
\end{equation}
\end{prop}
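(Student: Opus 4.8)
The plan is to prove the two assertions in turn, establishing first the uniform moment control (which also yields existence of $\nu$) and then the almost sure weak convergence, the latter reducing to a single functional identity.

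\textbf{Step 1 (Uniform $V^p$-moments and existence).} First I would obtain $\sup_n \E_x V^p(\bar X_n)<+\infty$ by a recursive Lyapunov argument carried directly on the Euler scheme~\eqref{eq:EulerDec}. Taylor-expanding $V^p$ along one step and taking the conditional expectation given the past, the first-order term produces $p\gamma_{n+1}V^{p-1}(\bar X_n)(\nabla V|b)(\bar X_n)$, dominated by $p\gamma_{n+1}V^{p-1}(\beta-\alpha V)$ thanks to $(ii)$, while the second-order contributions are controlled using $|\nabla V|^2\le CV$, $\sup_x\|D^2V(x)\|<+\infty$, $|b|^2\le C_bV$ and $\mathrm{Tr}(\sigma\sigma^*)=o(V)$. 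This yields, for $|x|$ large, a drift inequality $\E[V^p(\bar X_{n+1})\,|\,\mathcal{F}_n]\le V^p(\bar X_n)+\gamma_{n+1}\big(C_1-C_2V^p(\bar X_n)\big)$, from which uniform boundedness follows by a standard discrete Gronwall argument since $\gamma_n\downarrow 0$. Tightness of the empirical measures together with the identification of any limit point as $\mathcal{L}$-infinitesimally invariant then gives existence of $\nu$.

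\textbf{Step 2 (Reduction to a single functional identity).} For the convergence statement, the uniform $V^p$-bounds make the family $(\nu^{\eta,\gamma}_n(\omega,\cdot))_n$ almost surely tight, so it suffices to show that every weak limit point equals $\nu$. By the assumed uniqueness of the invariant measure, this reduces to proving that any limit point $\mu$ satisfies $\mu(\mathcal{L}g)=0$ for $g$ ranging over a class of test functions rich enough to characterize invariance. Hence the heart of the matter is the almost sure convergence
\[
\nu^{\eta,\gamma}_n(\mathcal{L}g)\longrightarrow 0\qquad\text{as } n\to+\infty,
\]
for sufficiently smooth $g$ with controlled growth.

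\textbf{Step 3 (Decomposition and the role of the two summability conditions).} To obtain this I would expand one step and write
\[
g(\bar X_k)-g(\bar X_{k-1})=\gamma_k\,\mathcal{L}g(\bar X_{k-1})+\Delta M_k+R_k,
\]
where $\Delta M_k$ gathers the martingale increments coming from $(\nabla g|\sigma\,\Delta W_k)$ and from the centred second-order term $(\sigma\Delta W_k)^*D^2g\,(\sigma\Delta W_k)-\gamma_k\mathrm{Tr}(\sigma^*D^2g\,\sigma)$, and $R_k$ is a remainder of conditional $L^1$-size $o(\gamma_k)$. Dividing by $\gamma_k$, weighting by $\eta_k$ and summing, the left-hand side becomes an Abel-type sum whose control requires exactly the first condition in~\eqref{eq:poids-pas}, the summability of $\frac{1}{H_n}\big(\frac{\eta_n}{\gamma_n}-\frac{\eta_{n-1}}{\gamma_{n-1}}\big)_+$. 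The martingale part $\frac{1}{H_n}\sum_{k}\frac{\eta_k}{\gamma_k}\Delta M_k$ is handled by Kronecker's lemma: setting $\tilde M_n=\sum_k\frac{\eta_k}{H_k\gamma_k}\Delta M_k$, its bracket increments are of size $\frac{\eta_k^2}{H_k^2\gamma_k}|\sigma^*\nabla g|^2$ up to lower order, so the second condition $\sum_n\big(\frac{\eta_n}{H_n\sqrt{\gamma_n}}\big)^2<+\infty$ together with the $V^p$-bounds makes $\tilde M_n$ an $L^2$-bounded, hence a.s. convergent, martingale, and Kronecker gives the vanishing of the normalized sum. Finally $\frac{1}{H_n}\sum_k\frac{\eta_k}{\gamma_k}|R_k|$ is of order $\frac{1}{H_n}\sum_k\eta_k\,\varepsilon_k\,(1+V^p(\bar X_{k-1}))$ with $\varepsilon_k\to 0$, which tends to $0$ by a Toeplitz/Cesàro argument, the $V^p$-moment bounds being what makes this vanishing almost sure rather than merely in mean.

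\textbf{Step 4 (Conclusion and the main obstacle).} Combining the three estimates yields $\nu^{\eta,\gamma}_n(\mathcal{L}g)\to 0$ almost surely; every weak limit point is therefore infinitesimally invariant and, by uniqueness, equal to $\nu$, which proves the weak convergence. The convergence~\eqref{eq:asCvgnu} for $\nu$-a.s. continuous functions with $V$-polynomial growth is then obtained by a truncation argument, the uniform $V^p$-integrability from Step~1 supplying the uniform integrability needed to pass from bounded continuous test functions to unbounded ones. In my view the main obstacle is twofold: first, ensuring that the class of $g$ for which Step~3 holds is large enough to characterize invariant measures (the Echeverria--Weiss type identification), which forces $g$ to have unbounded growth and thus makes the $V^p$-moment control indispensable throughout; and second, the careful bookkeeping in Step~3 that matches the two abstract conditions in~\eqref{eq:poids-pas} to, respectively, the Abel (drift) term and the martingale term with no spurious loss.
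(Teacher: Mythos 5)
The paper does not prove this proposition itself: it is quoted (``borrowed and slightly adapted'') from Lemaire's thesis, and your proposal reconstructs essentially the proof strategy of that reference (and of Lamberton--Pag\`es): a discrete Lyapunov drift inequality for the moments, an Echeverria--Weiss identification of the limit points via $\nu^{\eta,\gamma}_n({\cal L}g)\to 0$, and the splitting of $\frac{1}{H_n}\sum_k\frac{\eta_k}{\gamma_k}\Delta g(\bar X_k)$ into an Abel (drift) term governed by the first condition in~\eqref{eq:poids-pas} and a martingale term governed by the second one via Kronecker's lemma. The matching of the two summability conditions to these two terms is exactly right.

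One step is glossed over in your Step~2 and deserves to be made explicit: the bound $\sup_n\E_x V^p(\bar X_n)<+\infty$ does \emph{not} by itself yield the \emph{almost sure} tightness of the random measures $\nu^{\eta,\gamma}_n(\omega,\cdot)$; for that you need $\sup_n \nu^{\eta,\gamma}_n(\omega,V^{p'})<+\infty$ $\PE_x$-$a.s.$ for some $p'>0$. In the reference this is obtained by running the very same machinery of your Step~3 on the Lyapunov function itself: the drift inequality gives $\gamma_k V^{p}(\bar X_{k-1})\lesssim C\gamma_k + \big(V^{p}(\bar X_{k-1})-\E[V^{p}(\bar X_k)\,|\,{\cal F}_{k-1}]\big)$, and summing against $\eta_k/\gamma_k$, the telescoping part is controlled by the Abel condition while the compensating martingale has a.s. finite bracket thanks to the second condition in~\eqref{eq:poids-pas} together with $|\nabla V|^2\le CV$. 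With that supplement your argument is complete and coincides with the proof in the cited source; the truncation/uniform-integrability passage to test functions of $V$-polynomial growth in~\eqref{eq:asCvgnu} then rests on this same a.s. bound rather than on the expectation bound of Step~1.
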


\begin{Remarque} $\rhd$ By $V$-polynomial growth we mean that  $f= O(V^p)$  at infinity for some $p>0$.
 
\smallskip
\noindent $\rhd$ The condition $\mathbf{(S)}$ is stronger than~\eqref{eq:Lyap}. It  implies that there exists $\alpha'\!\in(0,+\infty)$ and $\beta \!\in \ER$ such that ${\cal L}V \le \beta' -\alpha' V$. In fact the  conclusions of the above proposition are also true for the continuous time occupation   measure $\nu_t(\omega)= \frac{1}{t}\int_0^t \delta_{X_s(\omega)}ds$ of the diffusion itself.

\smallskip
\noindent $\rhd$ The above result remains true under weaker Lyapunov assumptions of the following type: ${\cal L}V \le \beta' -\alpha' V^a$ with $a\in(0,1]$. For the sake of simplicity, we choose in this paper to state the results under $\mathbf{(S)}$ only but all what follows can be extended   to the weaker setting owing to additional technicalities (involving the control of the moments of the diffusion or of the Euler scheme~\eqref{eq:EulerDec}).  

\smallskip
\noindent $\rhd$ In the above proposition, the condition $\displaystyle \lim_{|x|\rightarrow+\infty} \frac{V(x)}{|x|^\rho}=+\infty$ can be relaxed into $\displaystyle \lim_{|x|\rightarrow+\infty} {V(x)}=+\infty$. For the sequel, the interest of this slightly reinforced assumption is to ensure that every function $f$ with polynomial growth has  a $V$-polynomial growth.
\end{Remarque}

\begin{definition} A pair  $(\eta_n, \gamma_n)_{n\ge 1}$ (with decreasing $\gamma_n)$ satisfying~\eqref{poidspastendinfty} and~\eqref{eq:poids-pas} is called an {\em averaging system}.
\end{definition} 

\noindent {\bf Examples.} If $\gamma_n = \gamzero n^{-a}$ and $\eta_n = \eta_1 n^{-c}$, then the pair $(\eta_n, \gamma_n)_{n\ge 1}$ is averaging as soon as $0<a<1$ and $0<c<1$.  In practice, we will extensively use that, furthermore, the  pairs of the form $(\gamma_n^{\ell}, \gamma_n)_{n\ge 1}$ are averaging for $\ell\!\in \{1,\ldots,\lceil \frac 1 a\rceil-1\}$ so that $a\ell < 1$. 

\medskip

\noindent The rate of convergence of $\nu^{\eta, \gamma}_n(f)$ toward $\nu(f)$ has also been elucidated and reads as follows (when $d=1$ and $\eta_n= \gamma_n$ for the sake of simplicity, keeping in mind that even in that setting, various averaging systems are involved): \smallskip

\noindent Set  $\Gamma^{(2)}_n = \sum_{k=1}^n \gamma_k^2$, $n\ge 1$. Assume  the Poisson Equation $f-\nu(f) =-{\cal L}g$ has a smooth enough solution and that $\displaystyle \frac{\Gamma_n^{(2)}}{\sqrt{\Gamma_n}} \to \widetilde \beta$, then  
\begin{eqnarray}\label{tclbasic}
\sqrt{\Gamma_n} \big(\nu^{\gamma, \gamma}_n(f) -\nu(f)\big) &\stackrel{(\ER)}{\longrightarrow} &{\cal N}\Big({\widetilde \beta} \nu(\Psi_2);\sigma_1^2(f) \Big)\quad \mbox{ if } \widetilde \beta \!\in [0,+\infty),\\
\frac{\Gamma_n}{\Gamma^{(2)}_n}\big(\nu^{\gamma, \gamma}_n(f) -\nu(f)\big) &\stackrel{a.s.}{\longrightarrow} &\nu(\Psi_2)\hskip 2,4cm  \mbox{ if } \widetilde \beta  =+\infty\label{tclbasic2}
\end{eqnarray}

\noindent with $\sigma_1^2(f)=\nu(| \sigma^*\nabla g|^2 \Big)=-2\nu(g.L g)$ and 
$$
\Psi_2(x):=\frac{1}{2} D^2g(x)b(x)^{\otimes 2}+\frac{1}{24} \ES[D^{(4)} g(x)(\sigma(x)U)^{\otimes 4}],\quad U\sim{\cal N}(0,I_q).
$$
When $\gamma_n = n^{-a}$ the unbiased $CLT$ ($\widetilde \beta =0$) holds for $a\!\in \big(\frac 13, 1\big]$,  the biased $CLT$  for $a = \frac 13$ and the biased convergence in probability for $a\!\in (0,\frac 13)$.

On can interpret this result as follows: if $(\gamma_n)$ decreases to $0$ fast enough ($\widetilde \beta =0$), the empirical measures $\nu^{\gamma,\gamma}_n$ behaves like the empirical measures $\nu_t$ of the diffusion. When  $(\gamma_n)$ goes to $0$ too slowly, there is a discretization effect which slows down the convergence of the empirical measure at rate $\frac{\Gamma_n}{\Gamma^{(2)}_n}$. The convergence then holds $a.s.$ (or at least in probability)  which confirms that what slows down the convergence is a bias term whose rate of decay is lower than $1/\sqrt{\Gamma_n}$. The top rate of convergence is obtained with a biased $CLT$.

We will see in Theorem~\ref{theo:CLT}  further on that, in fact,   there are many of these bias terms   which go to $0$ slower than the $CLT$ rate   for slowly decreasing steps. So killing these terms is a major issue to speed up such ergodic simulations (or Langevin Monte Carlo method) compared to  the regular Monte Carlo method.

\medskip 
The Multilevel paradigm has been  introduced by M. Giles in the  late 2000's  (2008, see~\cite{GIL}). Ever since, it  has been extensively adapted to various types of simulations (nested Monte Carlo, see~\cite{LEPA}, stochastic approximation~\cite{Frikha}) and dynamics (L\'evy driven diffusion, random maps, etc)  as a bias killer. The principle is  the following: assume that a quantity of interest to be computed does have  a representation as an expectation, say  $\E\, Y_0$, but that  the random variable  $Y_0$ cannot be simulated at a reasonable computational cost. Then one usually  approximates $Y_0$ by a family $(Y_h)_{h>0}$ of random vectors that can be simulated with a low complexity, relying on discretization schemes of the underlying dynamics. The typical situation is  the $Y_0 = f(X_T)$ or $F\big((X_t)_{t\in0,T]})$ where $(X_t)$ is a Brownian diffusion as above  and $Y_h = 
f(\bar X^n_T)$ or  $F\big((\bar X^n_t)_{t\in0,T]})$ where $(\bar X_t)_{t\in [0,T]}$ is a discretization scheme, say an  Euler or a Milstein scheme with step $h= \frac Tn\in {\mathbb H} = \big\{\frac{T}{m}, \, m\!\in \EN^*\big\}$. A multilevel estimator with depth $L\!\in \EN^*$  of $\E Y_0$ is designed by implementing a non-homogeneous Multilevel Monte Carlo (MLMC) estimator of size $N\!\in \EN^*$ of the form
\[
\frac{1}{N_1}\sum_{k=1}^{N_1}Y^{(1),k}_{{\mathbf h}} + \sum_{\ell =2}^L \frac{1}{N_{\ell}}\sum_{k=1}^{N_{\ell}}Y^{(\ell),k}_{\frac{{\mathbf h}}{M^{\ell-1}}}-Y^{(\ell),k}_{\frac{{\mathbf h}}{M^{\ell-2}}}
\]
where ${\mathbf h}\!\in \mathbb H$ is a fixed {\em coarse} step, $\big((Y^{(\ell),k}_h)_{h \in \mathbb H})\big)_{\ell=1,\ldots, L, k\ge 0}$ are independent copies of $(Y_h)_{h \in\mathbb H}$, $M\ge 2$,  is a  fixed integer and $N_1, \ldots,N_{_R}$ is an appropriate (optimized) allocation {\em policy} of the simulated paths across the levels $\ell$ such that $N_1+\cdots+N_{_R}=N$ (in practice, at a given level $\ell$, only $Y^{(\ell)}_{\frac{{\mathbf h}}{M^{\ell-1}}}$ and $Y^{(\ell)}_{\frac{{\mathbf h}}{M^{\ell-2}}}$ have to be simulated). The level $\ell=1$ is the {\em coarse} level whereas the levels $\ell\ge 2$ are the {\em refined} levels. Within a refined given level $\ell$, $Y^{(\ell),k}_{\frac{h}{M^{\ell-2}}} $ denotes the coarse scheme and $Y^{(\ell),k}_{\frac{h}{M^{\ell-1}}}$ the refined scheme. For some fixed $k$ and $\ell$, the random variables are ``consistent" in the sense that they have been simulated from the same underlying Brownian motion $W^{(\ell)}$. A   quantitative translation of this consistency is that $Y_h$ converges in (squared) quadratic norm to $Y_0$ at a $h^{\beta}$ rate, namely $\|Y_h-Y_{0}\|^2_2 \le V_1 |h|^{\beta} $, $h\!\in \mathbb H$. The parameter $\beta$ depends on $f$ or $F$ in a diffusion framework.  If $f$ or $F$  are locally Lipschitz continuous with polynomial growth (with respect to the sup norm as for $F$), $\beta = 1$. This parameter $\beta $ and the constant $V_1$ are key parameters to optimize the allocations of the paths to the various levels  (see~\cite{GIL, LEPA}).


Among other results,  M. Giles proved that if $\alpha=1$ and $\beta =1$ --~which is the standard situation in a diffusion discretized by its Euler scheme~-- when $Y_0= f(X_{_T})$, $Y_h= f(\bar X^n_{_T})$ (Euler scheme with step $h= \frac Tn$), $f$, 
$b$, $\sigma$ smooth enough (or  $\sigma$ uniformly elliptic if $f$ is simply Borel and bounded), the resulting complexity of  the optimized Multilevel Monte Carlo estimator  to attain a prescribed Mean Squared Error $\varepsilon^2$ behaves like $O\Big(\Big(\log(1/\varepsilon)/\varepsilon\Big)^{2}\Big)$ as $\varepsilon\to 0$. When $\beta >1$ (fast strong approximation like with the Milstein scheme), this rates attains $O\big(\varepsilon^{-2}\big)$ $i.e.$  the rate  of a (virtual)  unbiased simulation. The case $\beta<1$ provides even better improvements compared to a crude Monte Carlo simulation.

\smallskip
In a recent paper (see~\cite{LEPA}) a weighted version of  the above multilevel estimator has been devised to take advantage of a higher order expansion of the weak error (bias expansion) up to an order $R\!\in \EN^*$, namely
\[
\E \,Y_h = \sum_{r=1}^R c_r h^{\alpha r} + O\big( h^{\alpha (R+1)} \big),
\] 
still under the above quadratic convergence rate assumption. Then,  the so-called {\em Multilevel Richardson-Romberg estimator} (ML2R in short)  is still based on  the simulation of independent copies of 
$(Y_h)_{h \in\mathbb H}$ and reads
\[
\frac{\mathbf{W}^{(R)}_1}{N_1}\sum_{k=1}^{N_1}Y^{(1),k}_{{\mathbf h}} + \sum_{r =2}^R \frac{\mathbf{W}^{(R)}_r}{N_{r}}\sum_{k=1}^{N_{r}}Y^{(r),k}_{\frac{{\mathbf h}}{M^{r-1}}}-Y^{(r),k}_{\frac{h}{M^{r-2}}}
\]
where  the  $R$-tuple  $(\mathbf{W}^{(R)}_r)_{1\le r\le R}$ of weights has a closed form {\em entirely  determined by   $\alpha$,  and $M$} and not on $(Y_h)_{h\ge 0}$ (that means on the specific form of $f$, $b$, $\sigma$ in a diffusion framework).  For this weighted estimator,  the complexity is reduced {\em mutatis mutandis}  to $O\Big( \log(1/\varepsilon)/\varepsilon^2 \Big)$ in the setting $\beta =1$. When $\beta<1$ this   estimator  dramatically outperforms the above ``regular" multilevel method since it only differs from a (virtual) unbiased simulation by  a factor $\exp^{-\frac {1-\beta}{\alpha}\sqrt{\log(2) \log(1/\varepsilon)/2}}$   (when $M=2$) instead of $\varepsilon^{\frac{1-\beta}{\alpha}}$ with MLMC. The underlying idea for this weighted Multilevel method is to combine the multilevel paradigm with  a multistep Richardson-Romberg extrapolation introduced in~\cite{PAG}, hence its name.
We refer to~\cite{LEPA} for more precise results and proofs.

\medskip The aim of this paper is to transpose the weighted multilevel paradigm to the Langevin Monte Carlo simulation with decreasing step described above, with the issue that, in contrast with regular Monte Carlo simulation,  canceling the bias terms directly impacts the rate of convergence of the method by enlarging the range of step parameters for which a $CLT$ holds at rate $\sqrt{\Gamma_n}$ to coarser steps (so that $\Gamma_n$   goes faster to infinity where the  stationary regime takes places). So we will adapt the ML2R estimator to the occupation measure $\nu_n^{\gamma}= \nu_n^{\gamma,\gamma}$ introduced before. Like  in the regular  Monte Carlo setting, we introduce, for a function $f$,  a weighted estimator involving $\nu_n(f)$ and some correcting terms denoted by $\mu_n^{(r,M)}(f)$, $r=1,\ldots,R$  based on some pairs of coupled refined schemes (see~\eqref{eq:defcorterm} for details). Since the ergodic estimation of the invariant measure is based on only one path, the idea here  is to replace the allocation policy of realizations $N_1,\ldots,N_R$ of  the ML2R method by a {\em sizing}  policy $q_1,\ldots q_{_R}$ of the length of the coarse path (involved in $\nu_n(f)$) and those of the correcting sequences $\mu_n^{(r,M)}(f)$. 

\smallskip
In order to asymptotically kill the successive terms of the bias induced by the estimator, we will need some asymptotic expansions of the  $\nu_n$ and $\mu_n^{(r,M)}$ such as~\eqref{eq:nun11} and~\eqref{eq:tildenunrr} below. These expansions, which require the invertibility of the infinitesimal generator (or equivalently the existence of solutions to the Poisson equation) can be viewed as  the counterpart of the classical weak error/bias expansion $\ES[f(X_T)]-\ES[f(\bar{X}_T)]$ in finite horizon.  Concerning the strong convergence rate property which leads to the control of the variance of the corrective terms in the standard Multilevel method, its counterpart in our ergodic setting is the following mean confluence result which says that
 \[
 \frac{1}{\Gamma^{(2)}_n}\sum_{k=1}^n \gamma_k|X_{\Gamma_k}-\bar X_k|^2 \longrightarrow 0\quad a.s. \quad \mbox{ as } n\to +\infty.
 \] 
It  says that the $(\gamma^2,\gamma)$-empirical measure of the couple $(X,\bar X)$ concentrates on the diagonal of $\ER^d$ at rate $o\Big(\frac{\Gamma_n}{\Gamma_n^{(2)}}\Big)$. Such a property holds $e.g.$ when the diffusion itself is exponentially confluent (typically a mean-reverting Ornstein-Uhlenbeck process) under an exponential confluence property  which holds under $(\mathbf{S})$.  

 Throughout the proofs, we will work in one dimension for notational convenience. The extension to the multidimensional case  would only   generate  technicalities.\medskip 

\textbf{Outline.} The paper is organized as follows. We begin by introducing precisely the weighted empirical sequence built for the estimation of the invariant measure, called \textbf{{\bf ML2Rgodic}} and denoted by $\widetilde{\nu}_n^{R,{\bf W}}$. Then, our main results are divided in three parts. In Theorem~\ref{theo:CLT}, we obtain some CLTs for $\widetilde{\nu}_n^{R,{\bf W}}$: we show that the {\bf ML2Rgodic}-Algorithm with $R-1$ levels of corrections and an appropriate sequence $(\gamma_n)$ has an optimal rate of order $n^{\frac{R}{2R+1}}$ with an  asymptotic variance which is the same as the one of the original procedure. Then, in view of the optimization of the choices of the parameters, we exhibit in Theorem~\ref{L2theo} some first and second order asymptotic expansions of the Mean-Squared Error. Based on this result, we proceed to the optimization in Theorem~\ref{thm:optimiz} and provide some choices of the parameters involved by the algorithm which lead to a complexity of order 
$\varepsilon^{-2}\log(\frac{1}{\varepsilon})$ (instead of $\varepsilon^{-3}$ for the original procedure).  The main tools for the establishment of Theorems~\ref{theo:CLT} and~\ref{L2theo} appear in Sections~\ref{sec:proof1} and~\ref{sec:proof2}. Then,  the proofs of Theorems~\ref{theo:CLT},~\ref{L2theo} and~\ref{thm:optimiz} are achieved in Section~\ref{sec:proof3}. Finally, we end this paper by some numerical computations in Section~\ref{sec:numerics}.

\section{The Multilevel-Romberg Ergodic ({\bf ML2Rgodic}) procedure}
\subsection{Design of the {\bf ML2Rgodic} Langevin estimator}  We aim at adapting the multilevel paradigm to devise an ergodic estimator for the approximation of the invariant distribution. For a given integer $R\ge 2$, the idea is to modify the original procedure with the aim to kill the $R$ first terms of the expansion  of the discretization error without impacting too much  the simulation cost of simulation. 

Let $\gamma = (\gamma_n)_{n\ge 1}$ be a sequence of steps, and $M$ and $R$ be two integers such that $R\ge 2$ and $M\ge2$. First we consider an Euler scheme $\bar X^{(1)}=\bar X$ with decreasing step $\gamma$ associated to a standard Brownian motion $W^{(0)}= W$. We associate to this scheme  $R-1$ {\em independent}  coupled schemes $(\bar X^{(r)}, \,\bar Y^{(r,M)})$, $r=2,\ldots,R$, independent of $\bar X^{(0)}$   where 
\begin{itemize}
\item $\bar X^{(r)}$ is an Euler scheme with decreasing step $\gamma^{(r,M)}= \frac{\gamma}{M^{r-2}}$ (so that $\gamma^{(2,M)} = \gamma$) associated to a Brownian motion $W^{(r)}$.
\item $\bar Y^{(r,M)}$ is a  refined Euler scheme with decreasing step $\widetilde\gamma^{(r,M)}$ associated {\em  to the same}  Brownian motion $W^{(r)}$ where
\begin{equation}\label{eq:gamtildebb}
\forall \, m\in \{1,\ldots,M\},\quad \widetilde{\gamma}^{(r,M)}_{M(n-1)+m} = \frac{\gamma^{(r,M)}_n}{M}= \frac{\gamma_n}{M^{r-2}}, \; n\ge 1.
\end{equation}
\end{itemize}
Set, for every integers $\ell\ge 1$ and $r\ge 2$,
\bqn\label{eq:Gammalr}
\Gamma^{(\ell,r)}_n = \sum_{k=1}^n (\gamma^{(r,M)}_k)^{\ell}= M^{-(r-2)\ell} \sum_{k=1}^n \gamma^{\ell}_k =   M^{-(r-2)\ell}\Gamma^{(\ell)}_n
\eqn
where  $\Gamma_n^{(\ell)}=\Gamma_n^{(\ell,2)}=\sum_{k=1}^n\gamma_k^\ell$. Note that $\Gamma_n^{(\ell)} = \Gamma_n^{(\ell,2)}$. 

Then, we define for every $r=2,\ldots,R$ the sequence of  difference of the empirical measures of the  two schemes by
\begin{align}
\nonumber
 \mu^{(r,M)}_n(dx)&= \frac{1}{\Gamma^{(1,r)}_n} \sum_{k=1}^n \left(\left(\sum_{m=0}^{M-1}\widetilde \gamma^{(r)}_{M(k-1)+m} \delta_{\bar Y^{(r)}_{M(k-1)+m}}\right)- \gamma^{(r)}_k\delta_{\bar X^{(r)}_{k-1}}\right),\; n\ge 1,\\
 &=\frac{1}{ \Gamma^{(1,r)}_n} \sum_{k=1}^n \frac{\gamma_n}{M^{r-2}} \left(\frac{1}{M}\sum_{m=0}^{M-1}\delta_{\bar Y^{(r)}_{M(k-1)+m}}-\delta_{\bar X^{(r)}_{k-1}}\right),\; n\ge 1.\label{eq:defcorterm}
  \end{align}
The expected weak limit of $\mu^{(r,M)}_n(f)$ is $0$ as a difference of occupation measures of two Euler schemes with decreasing step. Thus, this empirical measure plays the role of a correcting term.

\smallskip  
Now, let $q_1,\ldots,q_R$ denote some positive real numbers, called {\em re-sizers} from now on,  satisfying
$$
\forall r\in\{1,\ldots,R\},\quad 0<q_r<1,\quad q_1+\ldots+q_R=1,
$$
and, for a given integer $n\ge 1$, set 
$$
n_r=\lfloor q_r n\rfloor, \; r=1,\ldots,R.
$$
 Let $f:\ER^d\rightarrow\ER$ be a smooth function, coboundary for the infinitesimal generator ${\cal L}$ (existence of solutions to the Poisson equation $f-\nu(f)= {\cal L}(g))$. Under some appropriate assumptions (including weak confluence)  we can prove in a sense made precise later on (see Propositions~\ref {prop:Multistep}$(b)$ and~\ref{prop:dvpcorrectiveterms}$(b)$) that the sequences $(\nu_{n_1}(f))_{n\ge 1}$ and 
$(\mu_{n_r}^{(r,M)}(f))_{n\ge 1}$ satisfy the following asymptotic generic type-expansions:

\begin{eqnarray}\label{eq:nun11}
\nu_{n_1}(f) &=& \nu(f) + \sum_{\ell=2}^{R+1} \frac{\Gamma_{n_1}^{(\ell)}}{\Gamma_{n_1}}\nu(\Psi_\ell) + \frac{{\mathbf{M}_n}}{\Gamma_n}+ o\Big(\frac{1}{\sqrt{\Gamma_{n_1}}}\wedge\frac{\Gamma_{n_1}^{(R+1)}}{\Gamma_{n_1}}\Big)\\
\label{eq:tildenunrr} \mu^{(r,M)}_{n_r}(f) & = &  \sum_{\ell=2}^{R+1}M^{(r-2)(1-\ell)}(M^{1-\ell}-1)\frac{\Gamma^{(\ell)}_{n_r}}{\Gamma_{n_r}}\nu(\Psi_\ell)+   o\Big(\frac{1}{\sqrt{\Gamma_{n_r}}}\wedge\frac{\Gamma_{n_r}^{(R+1)}}{\Gamma_{n_r}}\Big),
\end{eqnarray}
where $(\mathbf{M}_n)_{n\ge1}$ is a martingale and $(\Psi_\ell)_{\ell\ge 1}$ is  a sequence of functions made precise further on. At this stage, the reader can remark that there is no martingale term in the main part of the second expansion. This point, which is strongly linked with the weak confluence assumption $\mathbf{(C_w)}$ introduced below,  can be understood as follows: the martingale term induced by 
$\mu_n^{(r,M)}$ is asymptotically negligible against the one of $\nu_{n_1}(f)$.  In a rough sense, this means that if we build an appropriate combination of 
$\nu_{n_1}(f)$ and $\mu^{(r,M)}_{n_r}(f)$, $r=1,\ldots,R$, we will be able to kill the bias error without growing the asymptotic variance. But a numerical computation holds in a finite (non-asymptotic) setting so that this heuristic needs to be refined in practice. One of the objectives of the paper is thus to go deeper in the study of the expansion in order to be able to propose an efficient and potentially optimized method of approximation of the invariant distribution.

\medskip
\noindent \textbf{The {\bf ML2Rgodic}-algorithm:} As mentioned before, the first step toward our {\bf ML2Rgodic} estimator is to design  an appropriate combination of the formerly defined empirical measures in order to ``kill'' the bias. Furthermore, we require that this combination  does not depend  upon the size  $n$ of the estimator. We thus define a sequence of empirical measures denoted by $(\widetilde{\nu}_n^{(R,W)})_{n\ge1}$  by:
\begin{equation}\label{defML2Rgodic}
\widetilde{\nu}_n^{(R,{\bf W})}={\bf W}_1 \nu_{n_1}+\sum_{r=2}^R {\bf W}_r \mu_{n_r}^{(r,M)},\quad n\ge1,
\end{equation}
where ${\bf W}=({\bf W}_r)_{r=1}^R$ is a sequence of real numbers. For the sake of simplicity, we do not mention the dependency  of $\widetilde{\nu}_n^{(R,{\bf W})}$in $M$ and $\gamma$. Also, let us remark that the weights  ${\bf W}_r$ clearly depend on $R$ and will sometimes be  denoted ${\bf W}_r^{(R)}$ in order to recall this dependence when necessary. Let us now specify ${\bf W}$. First, by~\eqref{eq:nun11} and~\eqref{eq:tildenunrr}, one remarks that it is necessary to assume that  ${\bf W}_1=1$ in order to ensure the convergence towards $\nu$.

\medskip
Let us now consider the construction of ${\bf W}_2,\ldots,{\bf W}_R$. To this end, we   consider  from now on   step sequences with polynomial decay 
\begin{equation}\label{eq:step}
\gamma_k= \gamzero  k^{-a} \; \mbox{ with }\quad \gamzero >0,\; a\in(0,1).
\end{equation}
Then by plugging the expansions of the bias resulting from~\eqref{eq:nun11} and~\eqref{eq:tildenunrr} in the definition~\eqref{defML2Rgodic} of the  {\bf ML2Rgodic} estimator we derive that 
\begin{align*} 
\E&\big(\widetilde\nu^{(R,\mathbf W)}_n\big) =  {\mathbf W} _1\E\nu_{n_1}(f) + \sum_{r=2}^{R} \mathbf W_r\E\, \mu_{n_r}^{(r,M)}(f)\\
&=  {\mathbf W} _1 \nu (f) + \sum_{\ell=2}^{R+1} \left[  {\mathbf W} _1  \frac{\Gamma_{n_1}^{(\ell)}}{\Gamma_{n_1}}+\sum_{r=2}^R 
\mathbf W_r M^{(r-2)(1-\ell)}\big(M^{1-\ell}-1\big)  \frac{\Gamma_{n_r}^{(\ell)}}{\Gamma_{n_r}}
\right] \nu(\Psi_\ell) 
+o\Big(\frac{\Gamma^{(R+1)}_n}{\Gamma_n}\Big)  \\
&\approx {\mathbf W} _1 \nu (f) + \sum_{\ell=2}^{R+1} \frac{\Gamma_{n}^{(\ell)}}{\Gamma_{n}} \nu(\Psi_\ell) \left[  {\mathbf W} _1 q_1^{-a(\ell-1)}+\sum_{r=2}^R 
\mathbf W_r M^{(r-2)(1-\ell)}\big(M^{1-\ell}-1\big)   q_r^{-a(\ell-1)}
\right]\\
&\qquad +o\Big(\frac{\Gamma^{(R+1)}_n}{\Gamma_n}\Big)
\end{align*}
where the notation $\approx$ is used to keep in mind that one implicitly assumes that  
$   \frac{\Gamma_{n_r}^{(\ell)}}{\Gamma_{n_r}}-q_r^{-a(\ell-1)}  \frac{\Gamma_{n}^{(\ell)}}{\Gamma_{n}}$ is negligible (see further on the proof of Theorems~\ref{theo:CLT} and~\ref{L2theo}). Then as soon as the weights $(\mathbf{W}_r)_{1\le r\le R}$ are solutions to the linear system
\begin{equation}\label{eq:theSystem}
{\mathbf W} _1= 1,\quad {\mathbf W} _1 q_1^{-a(\ell-1)} +(M^{1-\ell}-1)\sum_{r=2}^R {\mathbf W} _r M^{-(r-2)(\ell-1)}q_r ^{-a(\ell-1)}= 0,\;\quad\ell=2,\ldots,R,
\end{equation}
the bias is ``killed'' up to order $R$ and reads
\[
\E\big(\widetilde\nu^{(R,\mathbf W)}_n\big) \approx  \frac{ 1-a}{1-a(R+1) }\,\gamzero ^R \,\nu(\Psi_{R+1}) \widetilde{\mathbf W}_{_{R+1}}n^{-aR} +o\Big(\frac{\Gamma^{(R+1)}_n}{\Gamma_n}\Big)
\]
where we set, more generally,
\begin{equation}\label{eq:Wtilde}
\widetilde{{\mathbf W}}_{_{R+i}}= {\mathbf W} _1 q_1^{-a(R+i)} +( M^{-R-i+1}-1)\sum_{r=2}^R {\mathbf W} _r  M^{-(r-2)(R+i-1)}q_r ^{-a(R+i)},\,i\ge 0.
\end{equation}
 The main difference at this stage with the regular weighted Multilevel estimator is that these weights {\em depend on} the re-sizers $q_r$ which will make a complete optimization  of these allocation parameters out of reach. 

In the following lemma  the linear system~\eqref{eq:theSystem} is solved. In short, it shows that  the  weights  are uniquely defined  provided the re-sizers $q_r$  satisfy $\frac{q_r}{ M^{r/a}}\neq \frac{q_s}{ M^{s/a}}$, $s\neq r$. Note that these weights depend on  the exponent $a$ (and the $(q_r)$) but not on $\gamzero $.

Another important point is that, by contrast with the regular weighted Multilevel Monte Carlo setting, this system in its general form {\em is not a regular Vandermonde system} though it shows some similarities. In fact it can be related to a sequence of $(R-1)\times(R-1)$-Vandermonde systems with closed solutions.  A notable exception to this situation occurs in the very special of  {\em uniform} re-sizers    $q_r = \frac 1R$, $r=1, \ldots,R$ where 
%
  we retrieve exactly the weights of the  regular Monte Carlo $ML2R$ introduced in~\cite{LEPA}.  For a given depth $R>1$, the closed form of  $({\bf W}_i)_{i=2}^R$ (keeping in mind that ${\bf W}_1=1$) is given by the following lemma.
  

\begin{lemme}\label{lem:WWtilde}  $(a)$ {\em General re-sizers}: If ${\bf q}:=(q_1,\ldots,q_R)\!\in {\cal S}_{_R}:=\big\{(x_1,\ldots,x_R)\!\in(0,+\infty)^R, \sum_{i=1}^R x_i=1\big\}$ and satisfies $\frac{q_r}{ M^{r/a}}\neq \frac{q_s}{M^{s/a}}$, $s\neq r$, then the above system~\eqref{eq:theSystem} has a unique solution given by

\begin{equation}\label{eq:Wr} {\mathbf W}^{(R)} _r = M^{r-2}\left(\frac{q_r}{q_{1}}\right)^a\sum_{k\ge 0} \frac{1}{M^k}    \prod_{s=2,s\neq r}^R\frac{1-M^{s-2-k}(q_s/q_1)^a}{1-M^{s-r}(q_s/q_r)^a}, \; r=2,\ldots,R.  
\end{equation}

Moreover, the coefficients $\widetilde{{\mathbf W}}^{(R)}_{_{R+i}}$, $i=1,2$, as defined in~\eqref{eq:Wtilde} read 
\begin{equation}\label{eq:Wtilde1}
\widetilde{{\mathbf W}}^{(R)}_{_{R+1}}= \frac{(1- M^{-R})}{q_1^{aR} }\sum_{k\ge 0}\frac{1}{ M^{kR}} \prod_{r=0}^{R-2}\Big(1- M^{k-r}\Big(\frac{q_1}{q_{r+2}}\Big)^a\Big).
\end{equation}
and
\begin{equation}\label{eq:Wtilde2}
\widetilde {\mathbf W}^{(R)}_{_{R+2}} = \frac{(1-M^{-(R+1)})}{q_1^{a(R+1)}}\sum_{k\ge 0} \frac{1}{ M^{k(R+1)}}\Big(1+\sum_{r=0}^{R-2}M^{k-r}\Big(\frac{q_1}{q_{r+2}}\Big)^a\Big) \prod_{r=0}^{R-2}\Big(1- M^{k-r}\Big(\frac{q_1}{q_{r+2}}\Big)^a\Big).
\end{equation}

\noindent $(b)$ {\em Uniform re-sizers}: If $q_r =\frac 1R$, $r=1,\ldots,R$,
the following simpler closed form holds for  the weights $\mathbf{W}^{(R)}_r$:
\begin{equation}\label{eq:quniform0}
\mathbf{W}^{(R)}_r =  \mathbf{w}^{(R)}_r+\cdots+ \mathbf{w}^{(R)}_{_R}, \; r=1,\ldots,R 
\end{equation}
with 
\begin{equation}\label{eq:quniform}
 \mathbf{w}^{(R)}_r = \prod_{s=1, s\neq r}^R\frac{ M^{-(s-1)}}{ M^{-(s-1)}- M^{-(r-1)}}=  \prod_{s=1, s\neq r}^R\frac{1}{1- M^{s-r}}, \; r=1,\ldots,R. 
\end{equation}
These  weights $( \mathbf{W}^{R}_r )_{r=1,\ldots,R}$, $R\ge 1$, are bounded $i.e.$ $\sup_{r=1,\ldots,R, R\ge 1}| \mathbf{W}^{(R)}_r | <+\infty$. Furthermore
\begin{equation}\label{eq:quniformtilde}
\widetilde {\mathbf W}^{(R)}_{_{R+1}}
=  (-1)^{R-1}  R^{aR}  M^{-\frac{R(R-1)}{2}}\quad\mbox{ and }\quad 
\widetilde {\mathbf W}^{(R)}_{_{R+2}}
=  (-1)^{R}  R^{a(R+1)}  M^{-\frac{R(R-1)}{2}}\frac{1-M^R}{1-M^{-1}}.
\end{equation}
\end{lemme}
\medskip
 The proof is postponed to  Appendix~\ref{App:A}.

\bigskip
\noindent {\bf Examples.} $\bullet$ $R=2$: ${\bf W}^{(2)}_1=1$, ${\bf W}^{(2)}_2=\frac{M}{M-1}\left(\frac{q_2}{q_1}\right)^a$

\smallskip
\noindent  $\bullet$  $R=3$: 
$$
({\bf W}^{(3)}_2,{\bf W}^{(3)}_3)=\frac{M}{M-1}\left(\left(\frac{q_2}{q_1}\right)^a\frac{1-\frac{M^2}{M+1}\left(\frac{q_3}{q_1}\right)^a}{1-M \left(\frac{q_3}{q_2}\right)^a}, \left(\frac{q_3}{q_1}\right)^a\frac{1-\frac{M^2}{M+1}\left(\frac{q_2}{q_1}\right)^a}{1-M^{-1}\left(\frac{q_2}{q_3}\right)^a}\right).
$$

When there is no ambiguity the superscript $^{(R)}$ will be dropped in the notations $\mathbf W^{(R)}$,   $\mathbf{w}^{(R)}_r$ and ${\mathbf W}^{(R)}_{_{R+1}}$. In the sequel, $\widetilde{\nu}_n^{(R,{\bf W})}$ will be always defined with ${\bf W}$ satisfying~\eqref{eq:theSystem} or~\eqref{eq:Wr}.

%


\medskip
\noindent \textbf{Assumptions.} We introduce below the assumptions for the first theorem. As recalled in the introduction, the study of the rate of convergence brings into play
the Poisson equation related to the SDE. In this paper where we are going deeper in the expansion of the error, we will need to use it successively. For the sake of simplicity, we thus assume the following (strong) assumption: 

\medskip
\noindent $\mathbf{(P)}:$ For every ${\cal C}^\infty$ function $f$, there exists a unique ({up to an additive constant}) ${\cal C}^\infty$-function $g$,  such that 
$f-\nu(f)=-{\cal L}g$. Furthermore, if $f$ is a function with polynomial growth, then $g$ also is.

\smallskip {For instance, it can be shown that, when $\sigma$ is bounded and uniformly elliptic (in the sense that $(\sigma\sigma^*(x)x|x)\ge \lambda_0|x|^2$ for some  $\lambda_0>0$), when Assumption $\mathbf{(S)}$ is in force and $f$, $b$ and $\sigma$ are smooth  have polynomial growth as well as  their derivatives,  then $\mathbf{(P)}$ holds true.  Actually, we first recall that under the ellipticity and Lyapunov assumptions, the semi-group converges exponentially fast towards $\nu$ (in total variation) so that  $g(x)=\int_0^{\infty} P_s f(x)-\nu(f) ds$ is well-defined and it is classical background that $g$ is  the {unique (up to a constant)} solution to the Poisson equation $f-\nu(f)=-{\cal L}g$ (see $e.g.$~\cite{Parver1}). Then, by~\cite[Theorem 6.17]{gilbarg}, under uniform ellipticity, $g$ is in fact ${\cal C}^\infty$ as soon as $f$, $b$ and $\sigma$ are. The polynomial growth  of  $g$ and $\nabla g$ has been proved in~\cite[Theorem 1]{Parver1}. The property is obtained through the a priori estimate, see Equation~(9.40) in~\cite{gilbarg}, which in fact also holds for $D^2g$. Then, we can  establish by induction that all  the partial derivatives of $g$  have a  polynomial growth. Assume it is true up to order $k$. First note that $u=\partial_{i_1,\ldots,i_{k-1}} g$ is a solution to ${\cal L} u=-f_{g}$ where $f_{g}$ is a function which depends on $f$, $b$ and $\sigma$ and their first order partial derivatives and some derivatives of $g$ up to order $k$.  Hence,  $f_g$ has polynomial growth and the {\em a priori} error bound (9.40) in~\cite{gilbarg} for the second order partial derivatives of  $u$ yields the polynomial growth of the partial derivatives $\partial_{i_1,\ldots,i_{k+1}} g$.}



\smallskip
The second additional assumption has been introduced  in~\cite{PP3} and deeply studied~\cite{LPPIHP}: it requires the diffusion to be \textit{weakly confluent}, $i.e.$ that two paths of the diffusion, with different initial values, but driven by the same Brownian motion, asymptotically cluster in a weak (or statistical) sense as follows: let $(X_t,Y_t)_{t\ge0}$ be the \textit{duplicated diffusion} (or {\em two-point motion}) associated with the diffusion ($SDE$) by 
\begin{equation}\label{eq:dupdif2}
\begin{cases}
d X_t= b(X_t) dt+\sigma(X_t) dW_t\\
dY_t = b(Y_t) dt+\sigma(Y_t) dW_t, 
\end{cases}
\end{equation}
where $X_0,\,Y_0$ are two starting values independent of $W$. If $\nu$ is  an invariant distribution for ($SDE$), $\nu_{\Delta}:=\nu\circ (x\mapsto(x,x))^{-1}$ is trivially invariant for the couple $(X,Y)$.  The  diffusion ($SDE)$ is said {\em weakly confluent}  if $\nu_{\Delta}$ is {\em the only invariant distribution}  for $(X,Y)$ (which implies implicitly  that $\nu$ itself is the unique invariant distribution of ($SDE$)). In the sequel,  this assumption is referred to as 

\medskip
\noindent $\mathbf{(C_w)}$: ($SDE$) is weakly confluent.

\begin{Remarque} $\rhd$ Under slight additional assumptions on the stability of ($SDE$), it can be shown (see~\cite{LPPIHP}) that, if $\mathbf{(C_w)}$ holds, the diffusion is statistically confluent in the sense that
$$
\frac{1}{t}\int_0^t\delta_{(X_s,Y_s)} ds\stackrel{(\ER^{2d})}{\Longrightarrow }\nu_{\Delta}\quad a.s.\quad\textnormal{as $t\rightarrow+\infty$}.
$$
 
\smallskip
\noindent $\rhd$ For the empirical measure $\widetilde{\nu}_n^{(R,{\bf W})}$,  the role of $\mathbf{(C_w)}$ is to ensure that the empirical measures $\mu_n^{r,L}$, built with  some  differences of 
schemes $\bar{X}_n^{(r)}$ and ${\bar Y}_n^{(r)}$ have a negligible asymptotic variance (with respect to that of $\nu_n$). This property will be made precise in Section~\ref{sec:dominatingmartingales}.
\end{Remarque}
We are now in position to state the first main theorem.

\begin{theorem}[CLT] \label{theo:CLT} Assume $\mathbf{(S)}$, $\mathbf{(P)}$ and $\mathbf{(C_w)}$. Let $(R,M)\!\in ({\mathbb N}^*\setminus\{1\})^2$ and let $(\mathbf W_r)_{1\le r\le R}$ denote the $R$-tuple of weights defined by~\eqref{eq:Wr}. Let  $q=(q_r)_{1\le r\le R}\!\in \cal S_R$ be an $R$-tuple of re-sizers satisfying $\frac{q_r}{M^r}\neq \frac{q_s}{M^s}$, $s\neq r$. Let $\gamma_n=\gamzero  n^{-a}$, $n\!\in \EN^*$, $a\in(0,1/R)$,  be a discretization step sequence.  Let $f:\ER^d\to\ER$ be a ${\cal C}^\infty$-function and denote by $g$ the solution to $f-\nu(f)=-{\cal L}g$. Let ${\bf W}=(\mathbf{W}_r)_{r=1,\ldots,R}$ be  defined by~\eqref{eq:Wr}.  

\smallskip
\noindent  $(a)$ If  $a\!\in \big( \frac{1}{2R+1}, \frac 1R\big)$, then  
\[
n^{\frac{1-a}{2}}\left(\widetilde{\nu}_n^{(R,\bf W)}(f)- \int_{\ER}fd\nu\right)\stackrel{(\ER)}{\Longrightarrow} {\cal N}\Big(0;\sigma_f^2(a,q,R)\Big)\quad \mbox{ as }\quad n\to +\infty
\]
with    
\begin{equation}\label{eq:sigma2f}
\sigma_f^2(a,q,R) = \frac{1-a}{\gamzero }\frac{\sigma_1^2(f)}{q_1^{1-a}}\quad \mbox{ with }\quad \sigma_1^2(f)=\nu(|\sigma^*\nabla g|^2).
\end{equation}

\noindent $(b)$ If $a=\frac{1}{2R+1}$,  the $CLT$ holds at an optimal rate towards a biased Gaussian distribution, namely
\[
n^{\frac{R}{2R+1}}\Big(\widetilde{\nu}_n^{(R,\bf W)}(f)- \int_{\ER^d}fd\nu\Big)\stackrel{(\ER)}{\Longrightarrow} {\cal N}\Big(m_f(q,R);\sigma_f^2(q,R) \Big)\quad \mbox{ as }\quad n\to +\infty
\]
 
 
 \noindent  with $\sigma_f^2(q,R):=\sigma_f^2(\frac{1}{2R+1},q,R)$ and  $m_f(q,R) :=  2 \gamzero ^{R}\widetilde{\mathbf W}_{_{R+1}}c_{R+1}$
where $\widetilde{\mathbf W}_{_{R+1}}$ is given by~\eqref{eq:Wtilde}
and $c_{R+1}=\nu(\Psi_{_{R+1}})$, $\Psi_{_{R+1}}$ being  a ${\cal C}^\infty$-function with polynomial growth (whose explicit expression in the one-dimensional case is given by~\eqref{eq:Phiell}). 

 
\smallskip
\noindent $(c)$ If $a\!\in \big(0,\frac{1}{2R+1}\big)$, then 
$$
n^{a R}\left(\widetilde{\nu}_n^{(R,\bf W)}(f)- \int_{\ER}fd\nu\right)\xrightarrow{\PE}m_f(a,q,R)\quad \textnormal{as } \quad n\rightarrow+\infty
$$
with 
\begin{equation}\label{eq: biaisresiduel}
m_f(a, q,R) :=  \frac{1-a}{1-a(R+1)}\gamzero ^R \widetilde  {\mathbf W}_{_{R+1}}c_{R+1}.
\end{equation}
%
\end{theorem}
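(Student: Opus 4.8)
The plan is to substitute the generic expansions~\eqref{eq:nun11} and~\eqref{eq:tildenunrr} into the definition~\eqref{defML2Rgodic} of $\widetilde{\nu}_n^{(R,{\bf W})}(f)$ and then to isolate three contributions: a deterministic bias, a leading martingale fluctuation, and a remainder. Since ${\bf W}_1=1$, the constant term is exactly $\nu(f)$. For the bias of order $\ell\in\{2,\ldots,R+1\}$, I would use that $n_r=\lfloor q_rn\rfloor$ together with $\gamma_k=\gamzero k^{-a}$ and $\Gamma^{(\ell)}_n\sim\frac{\gamzero^{\ell}}{1-a\ell}n^{1-a\ell}$ (valid for $a\ell<1$) to get $\Gamma^{(\ell)}_{n_r}/\Gamma_{n_r}\sim q_r^{-a(\ell-1)}\Gamma^{(\ell)}_n/\Gamma_n$. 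The coefficients of $\nu(\Psi_\ell)$ for $\ell=2,\ldots,R$ are then precisely those annihilated by the linear system~\eqref{eq:theSystem}, whose unique solution and closed form are provided by Lemma~\ref{lem:WWtilde}. After this cancellation the leading deterministic term is the order-$(R+1)$ contribution $\frac{1-a}{1-a(R+1)}\gamzero^{R}\widetilde{\mathbf W}_{R+1}c_{R+1}\,n^{-aR}$, with $c_{R+1}=\nu(\Psi_{R+1})$, the constraint $a<1/R$ guaranteeing $a\ell<1$ for every $\ell\le R$.

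Next I would show that the stochastic part is governed by the single martingale ${\mathbf M}_{n_1}$ attached to ${\mathbf W}_1\nu_{n_1}(f)$. Normalizing by $\sqrt{\Gamma_{n_1}}$, a martingale CLT applies: the predictable bracket satisfies $\Gamma_{n_1}^{-1}\langle{\mathbf M}\rangle_{n_1}\to\nu(|\sigma^*\nabla g|^2)=\sigma_1^2(f)$ almost surely, which follows from the a.s. weak convergence $\nu_{n_1}\Rightarrow\nu$ of Proposition~\ref{prop:convergence}, while the Lindeberg condition is obtained from the uniform polynomial moment bounds granted by $\mathbf{(S)}$ and the polynomial growth of $g$ under $\mathbf{(P)}$. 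Hence ${\mathbf M}_{n_1}/\sqrt{\Gamma_{n_1}}\Rightarrow{\cal N}(0,\sigma_1^2(f))$. Since $\Gamma_{n_1}\sim q_1^{1-a}\frac{\gamzero}{1-a}n^{1-a}$, rescaling by $n^{(1-a)/2}$ yields exactly the variance $\sigma_f^2(a,q,R)=\frac{1-a}{\gamzero}\sigma_1^2(f)/q_1^{1-a}$ announced in~\eqref{eq:sigma2f}.

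The crucial step is to prove that the martingale parts of the corrective sequences $\mu_{n_r}^{(r,M)}(f)$ are asymptotically negligible against ${\mathbf M}_{n_1}/\Gamma_{n_1}$; this is the sole purpose of the weak confluence assumption $\mathbf{(C_w)}$. The bracket of each $\mu$-martingale is driven by increments of $\sigma^*\nabla g$ evaluated along the coupled schemes $(\bar X^{(r)},\bar Y^{(r,M)})$, and the mean-confluence estimate recalled in the introduction, which forces the $(\gamma^2,\gamma)$-empirical measure of the pair to concentrate on the diagonal at rate $o(\Gamma_n/\Gamma^{(2)}_n)$, makes this bracket of strictly smaller order than that of $\nu_{n_1}$. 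Combined with the independence of the levels $r=1,\ldots,R$, this shows that the total fluctuation is asymptotically Gaussian with the same variance as the coarse term alone, so that no inflation of the variance occurs.

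It then remains to balance the residual bias, of order $n^{-aR}$, against the fluctuation, of order $n^{-(1-a)/2}$: the exponent $aR$ exceeds or falls below $(1-a)/2$ according as $a$ is larger or smaller than $\frac{1}{2R+1}$. For $a\in(\frac{1}{2R+1},\frac1R)$ the bias is $o(n^{-(1-a)/2})$ and a centered CLT at rate $n^{(1-a)/2}$ follows, proving $(a)$. At $a=\frac{1}{2R+1}$ the two scales coincide, $n^{-aR}=n^{-R/(2R+1)}$, and Slutsky's lemma adds the limit of the rescaled bias as the Gaussian mean; since $\frac{1-a}{1-a(R+1)}=2$ at this value of $a$, this mean equals $2\gamzero^{R}\widetilde{\mathbf W}_{R+1}c_{R+1}$, proving $(b)$. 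For $a\in(0,\frac{1}{2R+1})$ the rescaled fluctuation $n^{aR}{\mathbf M}_{n_1}/\Gamma_{n_1}=O_{\mathbb P}(n^{aR-(1-a)/2})$ tends to $0$ in probability, so only the deterministic bias survives and we obtain convergence in probability to $m_f(a,q,R)$, proving $(c)$. I expect the main obstacle to be the confluence step of the third paragraph, since it requires quantitative weak-confluence controls on the coupled decreasing-step Euler schemes; a secondary difficulty is to bound the discrepancies $\Gamma^{(\ell)}_{n_r}/\Gamma_{n_r}-q_r^{-a(\ell-1)}\Gamma^{(\ell)}_n/\Gamma_n$ tightly enough that the $\approx$ used in deriving~\eqref{eq:theSystem} becomes an exact cancellation at the CLT scale.
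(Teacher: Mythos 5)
Your proposal is correct and follows essentially the same route as the paper: expand $\nu_{n_1}(f)$ and the $\mu_{n_r}^{(r,M)}(f)$ to order $R+1$ (Propositions~\ref{prop:Multistep}$(b)$ and~\ref{prop:dvpcorrectiveterms}$(b)$), cancel the bias terms of orders $2,\ldots,R$ via the linear system~\eqref{eq:theSystem}, control the discrepancies $\Gamma^{(\ell)}_{n_r}/\Gamma_{n_r}-q_r^{-a(\ell-1)}\Gamma^{(\ell)}_n/\Gamma_n$ (this is exactly Lemma~\ref{lem:bias1}, which shows they are $O(n^{-(1-a)})=o(\Gamma_n^{-1/2})$), apply the martingale CLT to $M_{n_1}^{(1,g)}$ and use $\mathbf{(C_w)}$ to show the corrective martingales are $o_{L^2}(\sqrt{\Gamma_n})$ (Propositions~\ref{prop:contribmartnun}$(a)$ and~\ref{prop:contribmartmun}$(a)$), then conclude by Slutsky according to the position of $a$ relative to $\frac{1}{2R+1}$. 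The two difficulties you single out at the end are precisely the ones the paper addresses in Section~\ref{sec:dominatingmartingales} and Appendix~B respectively.
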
 

\begin{Remarque} Note that the definitions of $m_f(a,q,R)$ and $m_f(q,R)$ in the above claims $(b)$ and $(c)$ are consistent since $m_f(q,R) = m_f(a,q,R)$ when $a=\frac{1}{2R+1}$. 
\end{Remarque}

\medskip
From an asymptotic point of view, the above result says in particular that when $R$ grows, the optimal rate of convergence tends to $n^{\frac 12}$
without increasing the (asymptotic) variance. However, from a non-asymptotic point of view, one has certainly to go deeper in the result to try to optimize the choice of the parameters. This implies to take into account   the effect of the choice of ${\bf q}$, $M$  and $R$ on the residual bias term, the variance and on  the computational cost. This is the purpose of the next paragraph.

\smallskip
\noindent \textbf{$L^2$-expansions of the error.}
The aim of this part is to study the quadratic error to prepare the optimization of the parameter of the multilevel  estimator ($a,q,R,n$) algorithm subject to a prescribed quadratic error $\varepsilon>0$. To this end, we will not only provide a  re-formulation of  Theorem~\ref{theo:CLT} in quadratic norm,  we will also go deeper in the study of the asymptotic error. In particular, in the previous result, the variance induced by the correcting terms $\mu_n^{R,M}$ does not appear and we would like to quantify it.  We will also need to control the residual error terms not only in $n$ but also with respect to the depth $R$, since this parameter is intended to go to $+\infty$ in the optimization phase. This will lead us to carry out the expansion to the order $R+2$ and not $R$ or $R+1$ like in the above theorem and to introduce a second and more constraining confluence assumption denoted by $\mathbf{(C_s)}$:
 
\medskip
\noindent $\mathbf{(C_s)}:$ There exists $\alpha>0$ and a positive matrix $S$ such that for every $x,y\in\ER^d$,
$$
(b(x)-b(y)|x-y)_{_S}+\frac{1}{2}\|\sigma(x)-\sigma(y)\|^2_{_S}\le -\alpha \|x-y\|^2_{_S}
$$
where  $(\,.\,|\,.\,)_{_S}$ and by $|\,.\,|_{_S}$ stand for the inner product and norm on $\ER^d$ defined   by $(x|y)_{_S}=(x|Sy)$ and $|x|_{_S}^2=(x|x)_{_S}$, and for $A\!\in {\cal M}(d,d,\ER)$, $\| A\|_{_S}^2={\rm Tr}(A^*SA)$.

\smallskip
Furthermore to get closer to practical aspects, we only consider the optimal case $a=\bar a =1/(2R+1)$ which clearly provides the highest possible  rate of convergence for a given complexity. Finally we will focus on the uniform re-sizing vector $q_r = \frac 1R$, $r=1,\ldots,R$. They turn out to be most likely rate optimal and, as emphasized in Remark~\ref{Rem:5.10},   in that case the first term of the bias of the {\bf ML2Rgodic}  estimator  {\em does vanish} whereas for other choices of vectors $q$ a residual bias (at rate $O(n^{-1-\bar a}))$  still remains. Though theoretically negligible, it turns out to have a strong numerical impact on simulations. 

%
%

\begin{theorem}[Mean Squared Error for $a=\bar a =  \frac{1}{2R+1}$]\label{L2theo} $(a)$ Suppose that the assumptions of the previous theorem hold and let $a=\frac{1}{2R+1}$. Then, 
%
%
%
%
%
$$
\big\|\widetilde{\nu}^{(R,{\bf W})}_n(f)-\nu(f)\big\|_2^2=n^{-\frac{2R}{2R+1}}\left(\sigma_f^2(q,R)+ m_f^2(q,R)+o(1)\right)\; \textnormal{ as }\; n\rightarrow+\infty.$$

\smallskip
\noindent $(b)$ If, furthermore,  $\mathbf{(C_s)}$ holds 
$$
\big\|\widetilde{\nu}^{(R,{\bf W})}_n(f)-\nu(f)\big\|_2^2=n^{-\frac{2R}{2R+1}}\left(\sigma_f^2(q,R)+ m_f^2(q,R)\right)
+\frac{1}{n}\left(\widetilde{\sigma}^2_f(q,R)+\widetilde m_f (q,R)+o(1)\right) \textnormal{ as }\; n\rightarrow+\infty
$$
where, on the one hand 
\begin{equation}\label{eq:contribsecondordreterme}
\widetilde{\sigma}^2_f(q,R)=\frac{1}{q_1}\sigma_{2,1}^2(f)+\left(1-\frac{1}{M}\right)\Psi(R,M)\sigma_{2,2}^2(f)
\end{equation}
with  
\begin{equation}\label{eq:PsiRM}
\Psi(R,M)= \frac{4R^2}{4R^2-1}\sum_{r=2}^R (\mathbf{W}^{(R)}_r)^2
\end{equation}
and $\sigma_{2,1}^2(f)$ and $\sigma_{2,2}^2(f)$ are  some variance terms    explicitly defined  further on by~\eqref{eq:contribsecondordreterme1} and~\eqref{eq:contribsecondordreterme2} in Propositions~\ref{prop:contribmartnun} and~\ref{prop:contribmartmun} respectively. On the other hand $\widetilde m_f(q,R)$ is given by 
\[
\widetilde m_f(q,R)=\frac{8R}{R-1}c_{_{R+1}} c_{_{R+2}}  \gamzero ^{2R+1}\widetilde{\mathbf W}_{_{R+1}}^{(R)}\widetilde{\mathbf W}_{_{R+2}}^{(R)}.  
\]
 
\noindent $(c)$ If furthermore the  re-sizers are uniform, namely $q_r= \bar q_r = \frac1R$, $r=1,\ldots,R$, then the weights $\mathbf W_r^{(R)} $ are given by~\eqref{eq:quniform} and $\widetilde{\mathbf W}_{_{R+1}}^{(R)} $ and $\widetilde{\mathbf W}_{_{R+2}}^{(R)} $ by~\eqref{eq:quniformtilde} so that
\begin{equation}\label{eq: lesbiais}
 \widetilde m_f(\bar q,R)= -  \frac{4R}{R-1}c_{_{R+1}} c_{_{R+2}}  \gamzero ^{2R+1}R \,M^{-R(R-1)}\frac{1-M^{-R}}{1-M^{-1}}. 
\end{equation}

%
\end{theorem}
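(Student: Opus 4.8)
The plan is to split the mean-squared error into a squared bias and a variance. Since the martingale parts driving $\nu_{n_1}(f)$ and each corrective measure $\mu_{n_r}^{(r,M)}(f)$ are centered, writing $B_n:=\E\big[\widetilde{\nu}_n^{(R,\mathbf W)}(f)\big]-\nu(f)$ one has the exact identity
\[
\big\|\widetilde{\nu}^{(R,{\bf W})}_n(f)-\nu(f)\big\|_2^2 = B_n^2 + \mathrm{Var}\big(\widetilde{\nu}_n^{(R,\mathbf W)}(f)\big),
\]
so the two dominant contributions $m_f^2(q,R)$ and $\sigma_f^2(q,R)$ in $(a)$ arise from two separate and, as we shall see, independent sources. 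For $(a)$, Theorem~\ref{theo:CLT}$(b)$ already gives $n^{R/(2R+1)}(\widetilde{\nu}_n^{(R,\mathbf W)}(f)-\nu(f))\Rightarrow\mathcal N(m_f(q,R),\sigma_f^2(q,R))$, whose second moment equals $\sigma_f^2(q,R)+m_f^2(q,R)$. To turn this convergence in law into convergence of the rescaled $L^2$-norms it suffices to prove uniform integrability of $\big(n^{2R/(2R+1)}(\widetilde{\nu}_n-\nu(f))^2\big)_n$, for which I would establish an $L^4$ (or $L^{2+\delta}$) bound on $n^{R/(2R+1)}(\widetilde{\nu}_n-\nu(f))$ from the uniform moment estimates $\sup_n\E_xV^p(\bar X_n)<+\infty$ of Proposition~\ref{prop:convergence} and a Burkholder--Davis--Gundy control of the governing martingales.

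For $(b)$, the extra $n^{-1}$ term is of relative order $n^{-a}$, $a=\tfrac1{2R+1}$, with respect to the dominant $n^{-2R/(2R+1)}$ term, so both $B_n^2$ and the variance must be pushed one order further, i.e. to order $R+2$ in the expansions \eqref{eq:nun11}--\eqref{eq:tildenunrr}. On the bias side, once the weights \eqref{eq:theSystem} have annihilated the coefficients of $\nu(\Psi_\ell)$ for $\ell=2,\ldots,R$, the residual bias reads
\[
B_n = m_f(q,R)\,n^{-aR} + m_f^{(2)}(q,R)\,n^{-a(R+1)} + o\big(n^{-a(R+1)}\big),
\]
with $m_f(q,R)=2\gamzero^R\widetilde{\mathbf W}_{_{R+1}}c_{_{R+1}}$ (as in Theorem~\ref{theo:CLT}$(b)$) and, by the same extrapolation at order $R+2$, $m_f^{(2)}(q,R)=\tfrac{1-a}{1-a(R+2)}\gamzero^{R+1}\widetilde{\mathbf W}_{_{R+2}}c_{_{R+2}}=\tfrac{2R}{R-1}\gamzero^{R+1}\widetilde{\mathbf W}_{_{R+2}}c_{_{R+2}}$. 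Since $-a(2R+1)=-1$ while $-2a(R+1)<-1$, squaring keeps exactly one cross term at order $n^{-1}$, namely $2m_f(q,R)m_f^{(2)}(q,R)$, which equals the announced $\widetilde m_f(q,R)=\tfrac{8R}{R-1}c_{_{R+1}}c_{_{R+2}}\gamzero^{2R+1}\widetilde{\mathbf W}_{_{R+1}}\widetilde{\mathbf W}_{_{R+2}}$.

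On the variance side, the independence of $\bar X$ and of the $R-1$ coupled pairs $(\bar X^{(r)},\bar Y^{(r,M)})$ gives $\mathrm{Var}(\widetilde{\nu}_n^{(R,\mathbf W)}(f))=\mathrm{Var}(\nu_{n_1}(f))+\sum_{r=2}^R(\mathbf W_r^{(R)})^2\,\mathrm{Var}(\mu_{n_r}^{(r,M)}(f))$. Expanding the predictable bracket of the martingale $\mathbf M_n$ of \eqref{eq:nun11} to second order yields $\mathrm{Var}(\nu_{n_1}(f))=\sigma_f^2(q,R)n^{-2R/(2R+1)}+\tfrac1{q_1}\sigma_{2,1}^2(f)\,n^{-1}+o(n^{-1})$, the numerical constant $\tfrac{(1-a)^2}{1-2a}=\tfrac{4R^2}{4R^2-1}$ and the factor $\tfrac1{q_1}$ arising from $\Gamma_{n_1}^{(2)}/\Gamma_{n_1}^2\sim\tfrac{(1-a)^2}{(1-2a)q_1}n^{-1}$ (with $n_1=\lfloor q_1 n\rfloor$). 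It is precisely here that the strong confluence $\mathbf{(C_s)}$ is needed: $\mathbf{(C_w)}$ only makes the corrective variances negligible at leading order, whereas the geometric contraction of the two-point motion under $\mathbf{(C_s)}$ lets one compute the sharp equivalent $\mathrm{Var}(\mu_{n_r}^{(r,M)}(f))\sim(1-\tfrac1M)\tfrac{4R^2}{4R^2-1}\sigma_{2,2}^2(f)\,n^{-1}$; summing against $(\mathbf W_r^{(R)})^2$ produces the factor $\Psi(R,M)$ of \eqref{eq:PsiRM} and the term $(1-\tfrac1M)\Psi(R,M)\sigma_{2,2}^2(f)$. Adding both $n^{-1}$ contributions gives $\widetilde\sigma_f^2(q,R)$ of \eqref{eq:contribsecondordreterme}, so that the $n^{-1}$ coefficient of the MSE is $\widetilde\sigma_f^2(q,R)+\widetilde m_f(q,R)$, proving $(b)$. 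Finally $(c)$ is a direct substitution of the uniform-re-sizer weights \eqref{eq:quniform} and of $\widetilde{\mathbf W}_{_{R+1}},\widetilde{\mathbf W}_{_{R+2}}$ from \eqref{eq:quniformtilde} into $\widetilde m_f(q,R)$, using $a(2R+1)=1$ so that $R^{a(2R+1)}=R$, which yields \eqref{eq: lesbiais}.

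The main obstacle is the sharp second-order control of the corrective variances $\mathrm{Var}(\mu_{n_r}^{(r,M)}(f))$: one must upgrade the qualitative negligibility afforded by $\mathbf{(C_w)}$ into the precise $n^{-1}$-equivalent. This requires the exponential contraction of the coupled Euler schemes $(\bar X^{(r)},\bar Y^{(r,M)})$, which share the same Brownian increments, guaranteed by $\mathbf{(C_s)}$, together with the associated martingale-bracket asymptotics and a careful treatment of the $\approx$-type approximations $\Gamma_{n_r}^{(\ell)}/\Gamma_{n_r}\approx q_r^{-a(\ell-1)}\Gamma_n^{(\ell)}/\Gamma_n$ used throughout the bias expansion.
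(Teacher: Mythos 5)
Your overall architecture coincides with the paper's: for parts $(b)$ and $(c)$ you do exactly what the authors do, namely push the bias expansion to order $R+2$ (Propositions~\ref{prop:Multistep}$(c)$ and~\ref{prop:dvpcorrectiveterms}$(c)$), extract the single surviving cross term $2m_f(q,R)m_f^{(2)}(q,R)n^{-1}=\widetilde m_f(q,R)n^{-1}$ from the squared bias, use independence of the strata to split the variance, take the second-order bracket expansion of the coarse martingale (Proposition~\ref{prop:contribmartnun}$(c)$, giving $\sigma^2_{2,1}(f)$) and invoke $\mathbf{(C_s)}$ to upgrade the qualitative negligibility of the corrective martingales into the sharp $n^{-1}$ equivalent of Proposition~\ref{prop:contribmartmun}$(b)$, which produces $\Psi(R,M)\sigma^2_{2,2}(f)$; part $(c)$ is then the same substitution of the uniform-re-sizer weights. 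The only genuine divergence is part $(a)$: you propose to derive the $L^2$ statement from the weak convergence of Theorem~\ref{theo:CLT}$(b)$ plus uniform integrability via an $L^{2+\delta}$ bound, whereas the paper computes $\|\cdot\|_2^2$ directly from the decomposition~\eqref{roorzpopopa}, the deterministic bias bound, and the $L^2$ martingale estimates of Propositions~\ref{prop:contribmartnun}$(b)$ and~\ref{prop:contribmartmun}$(a)$. Your route is legitimate, but note two caveats: the remainder terms in the error expansion are only controlled in $L^2$ (as $o_{L^2}$), so the uniform integrability argument can only be applied after peeling them off in $L^2$, and the BDG/moment work needed for the $L^{2+\delta}$ bound on the martingale part essentially reproduces the bracket computation that the direct approach uses anyway; so the detour through the CLT buys nothing and is slightly more delicate to write down.
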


\subsection{Optimization procedure}

It remains to optimize the parameters to minimize the complexity of the estimator for a given prescribed {\em mean square error} ($MSE$). In view of the above Theorem~\ref{theo:CLT}, it is clear that the parameter $a$ should be settled at
$$
a=\bar a= \frac{1}{2R+1}.
$$


We start from Theorem~\ref{L2theo}$(b)$ with
$$
a =\bar a= \frac{1}{2R+1}\quad\mbox{ and }\quad q_r =\bar q_r:= \frac 1R, \;r=1,\ldots,R.
$$
Then the weights $\mathbf{W}_{r}$, $r=1\ldots,R$   and $\widetilde {\bf W}_{_{R+1}}$ are given by~\eqref{eq:quniform} and~\eqref{eq:quniformtilde} (those coming out in standard multilevel Monte Carlo $e.g.$ in the case of the approximation of a diffusion by its Euler scheme).

We denote by $\varpi = (R, \gamzero ,n,M)\!\in   \Pi= \EN^*\times (0,+\infty)\times \EN^*\times\EN^*$ the remaining set of free simulation parameters that we wish to optimize.
With this specification for  $a$ and the allocation vector $\bar q$, the $MSE(\varpi)$ reads 
\begin{equation}\label{eq:MSE}
\big\|\nu_n^{R,\mathbf{W}}-\nu(f)\big\|_2^2{=} \frac{1}{n^{\frac{2R}{2R+1}}} \Big(\sigma^2_f\big(\bar a,\bar q,R)+m^2_f(\bar a,\bar q,R)\Big)+\frac 1n \Big( \widetilde \sigma^2_f(\bar a,\bar q,R) +\widetilde m_f(q,R)+o( 1)\Big) \;
\end{equation}
as $n$ goes to $\infty$ where, owing to~\eqref{eq: biaisresiduel},~\eqref{eq: lesbiais},~\eqref{eq:sigma2f} and ~\eqref{eq:contribsecondordreterme},  
\begin{eqnarray*}
m_f(\bar a,\bar q,R) &=& 
2\gamzero ^R   (-1)^{R-1}R^{\frac{R}{2R+1}}M^{-\frac{R(R-1)}{2}}c_{_{R+1}},\\
 \widetilde m_f(\bar q,R)&=& -  \frac{8R}{R-1}c_{_{R+1}} c_{_{R+2}}  \gamzero ^{2R+1}R \,M^{-R(R-1)}\frac{1-M^R}{1-M^{-1}},
\\
\sigma^2_f(\bar a, \bar q,R)&=& 
=\frac{2R}{2R+1}R^{\frac{2R}{2R+1}}\sigma^2_1(f)\gamzero ^{-1},\\
\widetilde \sigma^2_f(\bar a, \bar q,R)&=& 
=R\left[\sigma_{2,1}(f)^2 + \left(1-\frac{1}{M}\right)\Psi(R,M)\sigma^2_{2,2}(f)\right].
\end{eqnarray*}
 

\noindent On the other hand, the complexity $K(\varpi,n,M)$ of the multilevel Langevin estimator  devised in~\eqref{defML2Rgodic} reads
\begin{eqnarray*}
K(\varpi,n,M) &=& n\big(q_1+(M+1)(q_2+\cdots+q_{_R})\big)\kappa_0 \\
&=& n\big(1+M(1-q_1)\big)\kappa_0 =  n\Big(1+M\Big(1-\frac{ 1}{R}\Big)\Big)\kappa_0
\end{eqnarray*}
where $\kappa_0$ denotes the unitary computational cost of one iteration of an Euler scheme.

\medskip To calibrate the above parameter $\varpi$, we want to minimize the complexity subject to a prescribed $RMSE$ $\varepsilon>0$, that is  solving the  constrained optimization problem:
\[
 \inf_{MSE(\varpi)\le \varepsilon^2} K(\varpi).
\]
To state the main result of this section, whose proof is postponed to Section~\ref{sec:proof3},   we need to  introduce  a function related to the weights $\mathbf{W}^{(R)}_r$ and on the depth of the simulation. 
We know from Lemma~\ref{lem:WWtilde}$(c)$  that $\sup_{1\le r\le R, R\ge 2}|\mathbf{W}^{(R)}_r|<+\infty$. Consequently, $M$ being fixed,  $\Psi(R,M)= O(R)$ as $R\to+\infty$ (where $\Psi$ is defined by \eqref{eq:PsiRM}). This  leads us to define 
\begin{equation}\label{eq:BoldPsi}
\mathbf{\Psi}(M)= \sup_{R\ge 1} \frac{\Psi(R,M)}{R}. 
\end{equation}
We refer to Table~\ref{valeursdePsi} for some numerical values of  $\Psi$ and ${\bf \Psi}$.  
 \begin{theorem} \label{thm:optimiz} Under the assumptions of Theorem~\ref{L2theo} and if, furthermore,
 $\displaystyle{\lim _{R\to +\infty}\frac{1}{R}\Big|\frac{c_{_{R+1}}}{c_{_{R}}}\Big|=0}$ and 
  $|c_{_R}|^{\frac 1R}\to \widetilde c \!\in (0, +\infty)$,
then
\[
 \inf_{MSE(\varpi)\le \varepsilon^2, \varpi \in \Pi} \hskip -0,5cm K(\varpi)  \precsim K(f,M) . \varepsilon^{-2}  \Big(\log \Big(\frac{1}{\varepsilon}\Big) \Big)\quad\mbox{as}\quad \varepsilon \to 0,
\]
where
\begin{equation}\label{eq:complexite}
K(f,M)=   \frac{2\kappa_0 (M+1)}{\log M}\left( \frac{  (M-1)\mathbf{\Psi}(M)}{\widetilde c\, \theta_1(f)}+1\right)\widetilde c\, \sigma_1^2(f)
\end{equation}
with
 $\theta_1(f)=\frac{\sigma_1^2(f)}{\sigma_{2,2}^2(f)}$.

\noindent $(b)$ The above bound can be achieved by the (sub-)optimal $\varpi^*$ given by $q^*  =  \frac{\mbox{\bf  1}}{R} $, $R^*=R(\varepsilon,M)=\lceil x(\varepsilon,M)\rceil$ where  $x(\varepsilon,M)$ is the unique solution to the equation  $\frac{\log(M)}{2} x(x-1)+x\log x +\log({\varepsilon})=0$ and 

\[
\gamma^*(\varepsilon, M)  =  \Big(\frac{2R}{2R+1}\Big)^{\frac{1}{2R+1}}(8R)^{-\frac{1}{2R+1}}|c_{_{R+1}}|^{-\frac{2}{2R+1}}\sigma_1^2(f)^{\frac{1}{2R+1}}M^{\frac{R(R-1)}{2R+1}}.
\]
Furthermore, 
as $\varepsilon\rightarrow0$,
\[
x(\varepsilon,M)= \sqrt{\frac{2\log \big(\frac{1}{\varepsilon}\big)}{\log M}}
 -\frac{\log_{(2)}\! \big(\frac{1}{\varepsilon}\big)}{2\log M} +\frac 12 + \frac{\log(\log M)-\log2}{2\log M}+O\left( \frac{ \log_{(2)}\!\big( 1/\varepsilon\big)   }{\sqrt{\log\big( 1/\varepsilon\big)}}\right) \quad  \mbox{ as } \varepsilon \to 0
\]
and 
the (minimal) number of iterations $n(\varepsilon,M)$ necessary to attain an MSE lower than $\varepsilon^2$ satisfies
\begin{equation}\label{eq:sizesimu}
n(\varepsilon,M)\precsim \frac{2}{\log M}\left( \frac{(M-1)\mathbf{\Psi}(M)}{\widetilde c\, \theta_1(f)}+1\right){\sigma_1^2(f)} \varepsilon^{-2}\log \Big(\frac{1}{\varepsilon}\Big) \quad  \mbox{ as } \varepsilon \to 0.
\end{equation}

 \end{theorem}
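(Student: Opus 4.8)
The plan is to treat $M$ as a fixed integer parameter and to optimize the remaining triple $(R,\gamzero,n)$ of $\varpi\in\Pi$ by a three-stage reduction, starting from the explicit $\mathrm{MSE}$ of \eqref{eq:MSE} with $a=\bar a$ and uniform re-sizers. First I would freeze $R$ and $n$ and optimize the step scale $\gamzero$. By Theorem~\ref{L2theo}$(c)$ the leading coefficient $\sigma_f^2(\bar q,R)+m_f^2(\bar q,R)$ splits into a variance part proportional to $\gamzero^{-1}$ and a squared-bias part proportional to $\gamzero^{2R}$ (through $m_f\propto\gamzero^{R}M^{-R(R-1)/2}c_{_{R+1}}$). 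Minimizing $A\gamzero^{-1}+B\gamzero^{2R}$ over $\gamzero>0$ is elementary: the stationary point is exactly the announced $\gamma^*(\varepsilon,M)$, at which $m_f^2=\tfrac1{2R}\sigma_f^2$, so that substitution collapses the first-order coefficient to a single quantity $D_R\approx\sigma_f^2$. Its $R$-dependence is controlled by the two hypotheses $|c_{_R}|^{1/R}\to\widetilde c$ and $\frac1R|c_{_{R+1}}/c_{_R}|\to0$: the first fixes the exponential scale $M^{-R(R-1)/(2R+1)}\widetilde c$, the second guarantees that the growth of the $c_{_R}$'s does not spoil the $\gamma^*$-balance.

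After this step the error reduces to the two-scale expression $\mathrm{MSE}\approx n^{-\frac{2R}{2R+1}}D_R+n^{-1}\widetilde{\sigma}_f^2(\bar q,R)$, the cross-bias $\widetilde m_f$ being exponentially smaller than $\widetilde{\sigma}_f^2$ because of its $M^{-R(R-1)}$ factor, while $\widetilde{\sigma}_f^2\sim(1-\tfrac1M)\mathbf{\Psi}(M)\sigma_{2,2}^2(f)\,R^2$ by \eqref{eq:PsiRM}–\eqref{eq:BoldPsi}. The complexity is $K(\varpi)\approx(M+1)\kappa_0\,n$, since $1+M(1-\tfrac1R)\to M+1$. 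As $K$ is increasing in $n$, minimizing $K$ amounts to taking $n$ as small as the constraint $\mathrm{MSE}\le\varepsilon^2$ allows and then minimizing over $R$. The crux is the opposite monotonicities in $R$: the second-order variance forces $n\gtrsim R^2\varepsilon^{-2}$, favouring small $R$, whereas killing the first-order term $n^{-\frac{2R}{2R+1}}D_R=n^{-1}D_R\,n^{\frac1{2R+1}}$ at $n\sim\varepsilon^{-2}$ requires the factor $M^{-R(R-1)/(2R+1)}\varepsilon^{-2/(2R+1)}$ to stay bounded, which forces $R\gtrsim\sqrt{2\log(1/\varepsilon)/\log M}$, favouring large $R$.

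I would make this trade-off quantitative as follows. Imposing $n^{-1}\widetilde{\sigma}_f^2\approx\varepsilon^2$ fixes $n\approx\widetilde{\sigma}_f^2\,\varepsilon^{-2}\sim(1-\tfrac1M)\mathbf{\Psi}(M)\sigma_{2,2}^2(f)\,R^2\varepsilon^{-2}$; requiring in addition that the first-order term $n^{-\frac{2R}{2R+1}}\sigma_f^2$ not exceed $\varepsilon^2$ and taking logarithms, the $R^2$ inside $n$ produces an $R\log R$ contribution and the bias factor $M^{-R(R-1)/(2R+1)}$ the quadratic one, so that after multiplication by $\tfrac{2R+1}2$ the threshold condition collapses to $\frac{\log M}{2}R(R-1)+R\log R\approx\log(1/\varepsilon)$. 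This identifies the relaxed continuous depth $x(\varepsilon,M)$ as the unique positive root of $\frac{\log M}{2}x(x-1)+x\log x+\log\varepsilon=0$, with optimal integer depth $R^*=\lceil x(\varepsilon,M)\rceil$. The announced expansion of $x(\varepsilon,M)$ then follows by a standard bootstrap inversion: the leading balance $\frac{\log M}{2}x^2\approx\log(1/\varepsilon)$ gives $x\approx\sqrt{2\log(1/\varepsilon)/\log M}$, and reinjecting this into the $x\log x$ and $-\frac{\log M}{2}x$ corrections yields the successive terms $-\frac{\log_{(2)}(1/\varepsilon)}{2\log M}+\frac12+\frac{\log(\log M)-\log2}{2\log M}$ up to the stated remainder.

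Finally I would substitute $R^*$ back. At $R=R^*$ both the first-order term and the second-order variance are of order $\varepsilon^2$, so the binding part of the constraint is the total variance $(V_1+V_2)/n$, with $V_1$ the effective first-order variance $\sigma_f^2 n^{1/(2R+1)}$ and $V_2=\widetilde{\sigma}_f^2$; solving $(V_1+V_2)/n=\varepsilon^2$ gives $n(\varepsilon,M)$, the $V_1$-part contributing the $\widetilde c\,\sigma_1^2(f)$ term and the $V_2$-part the $(M-1)\mathbf{\Psi}(M)\sigma_{2,2}^2(f)$ term (the prefactor $M+1$ and the conversion $1-\tfrac1M\mapsto M-1$ arising when passing from $n$ to $K=(M+1)\kappa_0 n$). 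Using $(R^*)^2\sim2\log(1/\varepsilon)/\log M$ and $\sigma_1^2(f)/\theta_1(f)=\sigma_{2,2}^2(f)$ then rearranges the constant into $K(f,M)$ of \eqref{eq:complexite}, giving $\inf K\precsim K(f,M)\,\varepsilon^{-2}\log(1/\varepsilon)$ and the bound \eqref{eq:sizesimu} on $n$. I expect the main obstacle to be uniformity in $R$: Theorem~\ref{L2theo} supplies the $o(1)$ remainders only as $n\to\infty$ at fixed $R$, whereas here $R=R^*\to\infty$ jointly with $n$; controlling these remainders, the convergence $\Psi(R,M)/R\to\mathbf{\Psi}(M)$, and the integer roundings $R^*=\lceil x\rceil$ and $n_r=\lfloor q_rn\rfloor$ uniformly along the chosen path $R=R^*(\varepsilon)$ is the delicate point on which the rigor of the whole argument rests.
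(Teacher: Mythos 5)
Your proposal is correct and follows essentially the same route as the paper's proof: first the elementary minimization of $A\gamzero^{-1}+B\gamzero^{2R}$ yielding $\gamma^*$, then budgeting the $\varepsilon^2$ error between the first- and second-order terms (the paper does this via an explicit splitting parameter $\rho$ that it saturates, you do it by setting each term $\approx\varepsilon^2$ directly, which is the same idea), leading to the equation $\frac{\log M}{2}x(x-1)+x\log x+\log\varepsilon=0$ solved by the same bootstrap inversion, and finally back-substitution of $R^*$ to get $n(\varepsilon,M)$ and $K(f,M)$. The uniformity-in-$R$ issue you flag at the end is genuine and is precisely why the paper pushes the expansion of Theorem~\ref{L2theo} to order $R+2$, though the paper's own treatment of the joint limit $R,n\to\infty$ remains at the same level of formality as yours.
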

 
 \smallskip
  \begin{Remarque} Though difficult to check in practice, note that the assumptions on the sequence $(c_r)_{r\ge 1}$ are satisfied as soon as 
  \[
  \lim_{R\to +\infty}\Big|\frac{c_{_{R+1}}}{c_{_{R}}}\Big|=\widetilde c\!\in (0, +\infty).
  \]
\end{Remarque}

 \smallskip\begin{Remarque} \label{valuesofRepsilon}Note that the choice of $R(\varepsilon,M)$ does not depend on the parameters. In Table~\ref{valeursdexepsilon}, we give the values of $x(\varepsilon,M)$ for several choices of $M$ and $\varepsilon$. As expected, one can check that $R(\varepsilon,M)$ increases very slowly when $\varepsilon$ decreases. 
\begin{table}[htbp]
\begin{center}
\begin{tabular}{|c||c|c|c|c|}
\hline
&$\varepsilon=10^{-1}$ & $\varepsilon=10^{-2}$ &$\varepsilon=10^{-3}$&$\varepsilon=10^{-4}$\\
\hline
\hline
$M=2$&2.08    &2.79 &   3.38 &   3.89\\
\hline
$M=3$&1.94 &    2.56&    3.06 &   3.50\\
\hline
$M=4$ &   1.87 &   2.44&    2.90&    3.30\\
\hline
\end{tabular}
\end{center}
\caption{\label{valeursdexepsilon} Values of $x(\varepsilon,M)$}
\end{table}


\end{Remarque}
\begin{Remarque} A remarkable  point to be noted is that we retrieve the same asymptotic rate as that obtained with the original ML2R Monte Carlo simulation at finite horizon, that is for the computation of expectations $\ES\,f(X_T)$ where $X=(X_t)_{t\in [0,T]}$ is a standard diffusion discretized by its Euler scheme.\end{Remarque}
 
 Practical aspects are investigated in the practitioners' corner (see Section~\ref{sec:practicorner}) especially how to calibrate the parameters which are involved in the definition of $\varpi^*$.

\section{Expansion of the error}\label{sec:proof1}
For the sake of simplicity, the proofs are detailed in dimension $1$. In the following subsections, we begin by decomposing  the quantity $\nu_n^{\gamma,\eta}(f)-\nu(f)$  for a given smooth  coboundary function $f$  ($i.e.$ such that the  Poisson equation $f-\nu(f)=-{\cal L}g $ has a smooth enough solution) and for a general weight sequence $(\eta_n)$. Then, in the next subsections, we successively propose some expansions of the error, $\nu_n^\gamma(f)-\nu(f)$   for the original  sequence $(\nu_n^\gamma(f))_{n\ge 1}$ (implemented on the coarse level) and for the sequences of correcting empirical measures
$(\mu_n^{(r,M)}(f))$ for $r=2,\ldots,R$ defined in~\eqref{eq:defcorterm} and corresponding to the successive refined levels of our estimator.

{Note that by expansion, we mean an expansion of the bias of our estimators (level by level then globally) until we reach an order at which we reach a martingale term involved in the weak rate of convergence.}

\subsection{Higher order expansion of  $\nu_n^\gamma(f)-\nu(f)$ (coarse level)}
\bigskip

For every integer $n\ge 1$, for every sequence $(v_n)_{n\ge1}$, we set $\Delta v_n=v_n-v_{n-1}$. We will also use the following notations:
$$
\mbox{ $U_n=\gamma_n^{-\frac{1}{2}}(W_{\Gamma_n}-\Gamma_{n-1})\stackrel{d}{=} {\cal N}\big(0;I_q\big)$ and  $\rho_{m}=\ES[U_1^m]$, $m\!\in \EN$.}
$$
\begin{lemme}\label{lemme:decomp1} Let $\EMM\in\EN$. Assume that  $f-\nu(f)=-{\cal L}g$ where $g$ is a ${\cal C}^{2\EMM+3}$-function. Then, for every integer $n\ge1$,
\begin{equation}\label{eq:Deltag}
\Delta g(\bar{X}_n)=-\gamma_n(f(\bar{X}_{n-1})-\nu(f))+ \left[\sum_{\ell=2}^{\EMM+1} \gamma_n^\ell \varphi_{\ell}(f)(\bar{X}_{n-1})\right]+\sum_{i=1}^3 \Delta M_n^{(i,g)}+\Delta R_{n,L}^{(1,g)}+\Delta R_{n,L}^{(2,g)}+\Delta R_{n,L}^{(3,g)}
\end{equation}
where
\begin{align*}
&\varphi_{\ell}(f)(x)=\sum_{(m_1,m_2),m_1+\frac{m_2}{2}=\ell}g^{(m_1+m_2)}(x) 
\frac{\rho_{m_2}}{ m_1!\,m_2!} b^{m_1}(x)\sigma^{m_2}(x)\\
&\Delta M_n^{(1,g)}=\sqrt{\gamma_n}(g'\sigma)(\bar{X}_{n-1})U_{n},\quad \Delta M_n^{(2,g)}=\frac{1}{2}\gamma_ng''(\bar{X}_{n-1})\sigma^{2}(\bar{X}_{n-1})\left[U_{n}^2-1\right],\\
&\Delta M_n^{(3,g)}=\gamma_n^{\frac{3}{2}}\left(\frac 12 g''(\bar{X}_{n-1})b(\bar{X}_{n-1})\sigma(\bar{X}_{n-1}) U_n+\frac 16 g^{(3)}(\bar{X}_{n-1})\sigma^{3}(\bar{X}_{n-1})U_{n}^3\right),\\
& \Delta R_{n,\EMM}^{(1,g)}=\sum_{\ell=2}^{2\EMM+1}\gamma_n^{\ell+\frac{1}{2}}\sum_{(m_1,m_2),m_1+\frac{m_2}{2}=\ell+\frac{1}{2}}g^{(m_1+m_2)}(\bar{X}_{n-1}) \frac{1}{ m_1!\,m_2!} b^{m_1}(\bar{X}_{n-1})\sigma^{m_2}(\bar{X}_{n-1})U_{n}^{m_2}\\
&\qquad\qquad+\sum_{\ell=2}^{2\EMM+1}\gamma_n^{\ell}\sum_{(m_1,m_2),m_1+\frac{m_2}{2}=\ell}g^{(m_1+m_2)}(\bar{X}_{n-1}) \frac{1}{ m_1!\,m_2!} b^{m_1}(\bar{X}_{n-1})\sigma^{m_2}(\bar{X}_{n-1})[U_{n}^{m_2}-\rho_{m_2}],\\
&\Delta R_{n,\EMM}^{(2,g)}=\sum_{\ell=\EMM+2}^{2\EMM+2}\gamma_n^{\ell}\sum_{(m_1,m_2),m_1+\frac{m_2}{2}=\ell}g^{(m_1+m_2)}(\bar{X}_{n-1})\frac{\rho_{m_2}}{ m_1!\,m_2!}b^{m_1}(\bar{X}_{n-1})\sigma^{m_2}(\bar{X}_{n-1})\\
&\Delta R_{n,L}^{(3,g)}=g^{(2\EMM+3)}(\xi_n)(\gamma_n b(\bar{X}_{n-1})+\sqrt{\gamma_n}\sigma(\bar{X}_{n-1})U_n)^{2\EMM+3},\; \xi_n\in[\bar{X}_{n-1},\bar{X}_n].
\end{align*}
As a consequence,
\begin{equation}\label{eq:decomp2}
\begin{split}
\nu_n^{\eta,\gamma}(f)-\nu(f)=& -\frac{1}{H_n}\sum_{k=1}^n \frac{\eta_k}{\gamma_k}\Delta g(\bar{X}_k)+\sum_{\ell=2}^{\EMM+1}\frac{\sum_{k=1}^n \eta_k\gamma_k^{\ell-1}}{H_n}\nu_n^{\eta\gamma^{\ell-1},\gamma}(\varphi_{\ell}(f))\\
&+
\frac{1}{H_n}\sum_{k=1}^n\frac{\eta_k}{\gamma_k} \left(\sum_{i=1}^3\Delta M_k^{(i,g)}+\Delta R_{k,L}^{(1,g)}+\Delta R_{k,L}^{(2,g)}+\Delta R_{k,L}^{(3,g)}\right).
\end{split}
\end{equation}
\end{lemme}
\begin{proof} By the Taylor formula with order $2L+2$, we have for every $x$ and $y$ in $\ER^d$,
$$g(x+y)-g(x)=\sum_{\ell=1}^{2L+2}\frac{1}{k!} g^{(k)}(x) y^{k}+ g^{(2L+3)}(\xi) y^{2L+3}$$
where $\xi\in[x,x+y]$. Then, if  $y=\gamma b(x)+\sqrt{\gamma}\sigma(x) u$ with $u\in\ER^d$,
$$\frac{1}{k!}y^k= \sum_{m_1+m_2=k} \frac{1}{m_1!m_2!} \gamma^{m_1+\frac{m_2}{2}}b^{m_1}(x)\sigma^{m_2}(x)u^{m_2}.$$
The decomposition of $\Delta g(x)$ easily follows  by separating  odd and even $m_2$  and by remarking that 
\begin{align*}
g'(x)y+\frac{1}{2} g''(x)y^2=-\gamma {\cal L} g(x)+\sqrt{\gamma}\sigma(x)u+\frac{1}{2}\gamma \sigma^2(x)(u^2-1)+\frac{1}{2}g''(x)\left(\gamma^2 b^2(x)+2\gamma^{\frac 32}\sigma(x)u\right).
\end{align*} 
Since 
$$\nu_n^{\eta,\gamma}(f)-\nu(f)=\frac{1}{H_n}\sum_{k=1}^n\frac{\eta_k}{\gamma_k}\left(\gamma_k(f(\bar{X}_{k-1})-\nu(f)\right),$$
the second part of the lemma is a direct consequence.
\end{proof}

For notational convenience, we will denote by  ${\cal Q}f$ in what follows  the  solution of the Poisson equation $f-\nu(f)= {-}{\cal L}\big({\cal Q}f\big)$  satisfying $\nu({\cal Q}f)=0$. {(Under Assumption $\mathbf{(P)}$,  ${\cal Q}f$ is well-defined)}.
 
\begin{definition} $(a)$ Under Assumption $\mathbf{(P)}$, one may define a mapping $\varphi^{[1]}_{\ell}(.)$  from ${\cal C}^\infty(\ER,\ER)$ into itself defined for every $ f\in{\cal C^\infty(\ER,\ER)}$ by
\begin{equation}\label{eq:LesVarphi}
 \varphi^{[1]}_{\ell}(f)(.)=\sum_{(m_1,m_2),m_1+\frac{m_2}{2}=\ell}\frac{\rho_{m_2} }{ m_1!\,m_2!}b^{m_1}(.)\sigma^{m_2}(.)({\cal Q}f)^{(m_1+m_2)}(.)
\end{equation}
where 
$h^{(k)}$ denotes the $k^{th}$ derivative of a function $h$. Then,  for every $\ell\!\in\EN$, one sets $ \varphi_{\ell}^{[m]}=  \varphi_{\ell}^{[m-1]}\circ  \varphi_{\ell}^{[1]} $.  To alleviate notations, we will often write  $\varphi_m(f)$ instead of $ \varphi_m^{[1]}(f)$ in what follows.

\smallskip 
\noindent $(b)$ Still under Assumption $\mathbf{(P)}$, we  define the mappings $\Psi_{\ell}$, $\ell\!\in \EN^*$, 
\begin{equation}\label{eq:Phiell}
\Psi_\ell
=\sum_{k=1}^{\ell-1}\sum_{\underset{m_1+\ldots+m_k=\ell+k-1}{(m_1,\ldots,m_k)\in  \llbracket 2,\ell\rrbrack^k,}}\varphi_{m_1}\circ\ldots \circ \varphi_{m_k}.
\end{equation}
\end{definition}  
For example,  note that 
$$
\Psi_2=\varphi_2,\quad\Psi_3=\varphi_3+\varphi_2^{[2]}\quad\textnormal{and}\quad \Psi_4=\varphi_4+\varphi_3\circ\varphi_2+\varphi_2\circ\varphi_3+\varphi_2^{[3]}.
$$
We have the following expansions of the error, depending on the averaging properties of the step sequence $\gamma$. 
%

\begin{prop}[Bias error expansion for the coarse level] \label{prop:Multistep} Assume $\mathbf{(S)}$, $\mathbf{(P)}$ (and uniqueness of the invariant distribution $\nu$). Let $R\!\in \EN$, $R\ge 2$ and let $f\!\in{\cal C}^\infty(\ER,\ER)$ with polynomial growth
and $g={\cal Q} f$.

\smallskip
\noindent $(a)$ If $(\gamma_n^{\ell}, \gamma_n)_{n\ge 1}$ is averaging for every $\ell\!\in \{1,\ldots,\ERR\}$, 
$$ 
\nu_n^{\gamma}(\omega,f)-\nu(f)-\sum_{\ell=2}^{\ERR}\frac{\Gamma_n^{(\ell)} }{\Gamma_n}\nu\big(\Psi_{\ell}(f)\big)=\frac{M_n^{(1,g)}}{\Gamma_n}+o_{L^2}\left(\frac{\sqrt{\Gamma_n}\vee\Gamma_n^{(\ERR)}}{\Gamma_n}\right).
$$
\noindent $(b)$ If, furthermore, the pair $(\gamma_n^{R+1}, \gamma_n)_{n\ge 1}$ is averaging, 
$$ \nu_n^{\gamma}(\omega,f)-\nu(f)-\sum_{\ell=2}^{\ERR}\frac{\Gamma_n^{(\ell)} }{\Gamma_n}\nu(\Psi_{\ell}(f))=\frac{M_n^{(1,g)}}{\Gamma_n}+\frac{\Gamma_n^{(\ERR+1)} }{\Gamma_n}\nu\big(\Psi_{\ERR+1}(f)\big)+o_{L^2}\left(\frac{\sqrt{\Gamma_n}\vee\Gamma_n^{(\ERR+1)}}{\Gamma_n}\right).$$
(c) The following sharper expansion also holds when $(\gamma_n^{R+2}, \gamma_n)_{n\ge 1}$ is averaging 
\begin{align*}
\nu_n^{\gamma}(\omega,f)-\nu(f)-\sum_{\ell=2}^{\ERR}\frac{\Gamma_n^{(\ell)} }{\Gamma_n}&\nu(\Psi_{\ell}(f))=\frac{M_n^{(1,g)}+N_n}{\Gamma_n}\\
&+\frac{\Gamma_n^{(\ERR+1)} }{\Gamma_n}\nu\big(\Psi_{\ERR+1}(f)\big)+
\frac{\Gamma_n^{(\ERR+2)} }{\Gamma_n}\nu\big(\Psi_{\ERR+2}(f)\big)+o_{L^2}\left(\frac{\sqrt{\Gamma_n^{(3)}}\vee\Gamma_n^{(\ERR+2)}}{\Gamma_n}\right),
\end{align*}
where $N_0=0$ and 
$$\Delta N_n=\Delta M_k^{2,g}+\Delta M_k^{3,g}+\gamma_k^{\frac{3}{2}}(\sigma g_2')(\bar{X}_{k-1})U_k,$$
with $g_2={\cal Q}(\varphi_2(f))$, $i.e.$ the solution to $\varphi_2(f)-\nu(\varphi_2(f))=-{\cal L} g_2$.
\end{prop}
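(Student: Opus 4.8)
The plan is to iterate the one-step decomposition of Lemma~\ref{lemme:decomp1}. Applied with $\eta=\gamma$ (so that $H_n=\Gamma_n$ and $\eta_k/\gamma_k\equiv 1$), equation~\eqref{eq:decomp2} reads
\[
\nu_n^{\gamma}(f)-\nu(f)= -\frac{g(\bx_n)-g(\bx_0)}{\Gamma_n}+\sum_{\ell=2}^{L+1}\frac{\Gamma_n^{(\ell)}}{\Gamma_n}\,\nu_n^{\gamma^{\ell},\gamma}\big(\varphi_{\ell}(f)\big)+\frac{1}{\Gamma_n}\sum_{k=1}^n\Big(\sum_{i=1}^3\Delta M_k^{(i,g)}+\sum_{i=1}^3\Delta R_{k,L}^{(i,g)}\Big),
\]
where I fix $L$ large enough (say $L=R+2$) so that the deterministic remainder $R^{(2,g)}$ (whose increments start at order $\gamma_k^{L+2}$) and the centered martingale remainders $R^{(1,g)},R^{(3,g)}$ fall below the targeted scale. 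The telescoping term is $O_{L^2}(1/\Gamma_n)$ by the polynomial growth of $g$ granted by $\mathbf{(P)}$ and the uniform moment bounds $\sup_n\E_x V^p(\bx_n)<+\infty$ of Proposition~\ref{prop:convergence}.

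I then recursively re-expand each residual block $\nu_n^{\gamma^{\ell},\gamma}(\varphi_\ell(f))$. Since $\varphi_\ell(f)$ is $\mathcal{C}^\infty$ with polynomial growth, $\mathbf{(P)}$ provides $\mathcal{Q}(\varphi_\ell(f))$, again smooth with polynomial growth, so Lemma~\ref{lemme:decomp1} applies with weight $\eta=\gamma^{\ell}$. Its ``second term'' is $\sum_{\ell'\ge 2}\frac{\Gamma_n^{(\ell+\ell'-1)}}{\Gamma_n^{(\ell)}}\nu_n^{\gamma^{\ell+\ell'-1},\gamma}(\varphi_{\ell'}\circ\varphi_\ell(f))$, so one composition raises the $\Gamma$-index from $\ell$ to $\ell+\ell'-1$. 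Iterating, a chain $\varphi_{m_1}\circ\cdots\circ\varphi_{m_k}$ carries the factor $\Gamma_n^{(p)}/\Gamma_n$ with $p=m_1+\cdots+m_k-(k-1)$; collecting all chains of a fixed order $p=\ell$ (each $m_i\in\llbracket 2,\ell\rrbracket$, $k$ ranging over $1,\dots,\ell-1$) reproduces exactly the aggregated function $\Psi_\ell$ of~\eqref{eq:Phiell}, and the leading part $\nu_n^{\gamma^{p},\gamma}(\psi)\to\nu(\psi)$ of each block yields the coefficient $\nu(\Psi_\ell(f))$. I truncate the recursion at order $R$ for $(a)$, $R+1$ for $(b)$, $R+2$ for $(c)$. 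At each re-expansion the telescoping term now carries the non-constant factor $\eta_k/\gamma_k=\gamma_k^{\ell-1}$, handled by Abel summation: the boundary term is $O_{L^2}(\gamma_n^{\ell-1}/\Gamma_n)=o(1/\Gamma_n)$, and the increment sum is bounded by $\sum_k(\gamma_{k-1}^{\ell-1}-\gamma_k^{\ell-1})\,\|\psi(\bx_{k-1})\|_2\le C\gamma_1^{\ell-1}$, hence negligible after division by $\Gamma_n^{(\ell)}$.

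The $L^2$ bookkeeping is the main technical point. Each block reached at the truncation level is replaced by $\nu(\cdot)$ with an $L^2$-controlled error, for which I use the crude one-step estimate $\|\nu_n^{\gamma^{p},\gamma}(\psi)-\nu(\psi)\|_2\precsim \sqrt{\Gamma_n^{(2p-1)}}/\Gamma_n^{(p)}+\Gamma_n^{(p+1)}/\Gamma_n^{(p)}$, again a consequence of Lemma~\ref{lemme:decomp1}; its martingale part is estimated by the $L^2$-isometry for the dominant increment $\gamma_k^{p-1/2}(\sigma\,(\mathcal{Q}\psi)')(\bx_{k-1})U_k$. Multiplying by the outer factor $\Gamma_n^{(\ell)}/\Gamma_n$ and using that, for $\gamma_k=\gamzero k^{-a}$ with $a<1/R$, one has $\Gamma_n^{(\ell+1)}=o(\Gamma_n^{(\ell)})$ and $\Gamma_n^{(2)}=o(\Gamma_n)$, every discarded block error falls below the stated scale: the replacement errors are $o(\sqrt{\Gamma_n}/\Gamma_n)$ (since $\sqrt{\Gamma_n^{(2p-1)}}\le\sqrt{\Gamma_n^{(3)}}=o(\sqrt{\Gamma_n})$ for $p\ge2$), the un-expanded leaves contribute $O_{L^2}(\Gamma_n^{(R+1)}/\Gamma_n)=o(\Gamma_n^{(R)}/\Gamma_n)$, and the remainders $R^{(i,g)}$ and the telescoping terms are smaller still. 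Organizing the surviving martingale increments by their $\gamma_k$-order and the deterministic ones by their $\Gamma_n^{(\cdot)}$-order gives the remainder as $o_{L^2}$ of the maximum of the smallest retained martingale scale and the last retained bias scale, i.e. $\tfrac{\sqrt{\Gamma_n}\vee\Gamma_n^{(R)}}{\Gamma_n}$ in $(a)$ and $\tfrac{\sqrt{\Gamma_n}\vee\Gamma_n^{(R+1)}}{\Gamma_n}$ in $(b)$.

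For $(c)$ I keep one further martingale level. The dominant martingale is always $M_n^{(1,g)}$ (increments of order $\gamma_k^{1/2}$, scale $\sqrt{\Gamma_n}$); the next retained increments are those of order $\gamma_k$ and $\gamma_k^{3/2}$ from the coarse node, namely $\Delta M_k^{(2,g)}$ and $\Delta M_k^{(3,g)}$, together with the leading martingale produced when re-expanding the $\ell=2$ block: applying Lemma~\ref{lemme:decomp1} to $\varphi_2(f)$ with weight $\gamma^2$ and $g_2=\mathcal{Q}(\varphi_2(f))$ outputs, after the factor $\eta_k/\gamma_k=\gamma_k$ and the outer $\Gamma_n^{(2)}/\Gamma_n$, precisely the increment $\gamma_k^{3/2}(\sigma g_2')(\bx_{k-1})U_k$. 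These three families assemble into $N_n$, while all remaining martingale increments are of order $\ge\gamma_k^{2}$, hence of scale $\le\sqrt{\Gamma_n^{(4)}}=o(\sqrt{\Gamma_n^{(3)}})$, fixing the remainder scale $\tfrac{\sqrt{\Gamma_n^{(3)}}\vee\Gamma_n^{(R+2)}}{\Gamma_n}$. The two genuine obstacles are therefore: first, the combinatorial identity that chains of total order $\ell$ reassemble into $\Psi_\ell$, which I would settle by induction on the number $k$ of compositions using that each $\varphi_m$ raises the order by $m-1$; and second, the uniform $L^2$ control of the doubly-indexed family of residual weighted empirical measures generated by the recursion, which requires the averaging property of $(\gamma^{p},\gamma)$ to hold at every level $p$ up to $R$, $R+1$ or $R+2$ according to the hypotheses of $(a)$, $(b)$ and $(c)$.
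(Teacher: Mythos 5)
Your proposal is correct and follows essentially the same route as the paper: iterate the one-step decomposition of Lemma~\ref{lemme:decomp1}, control the telescoping, remainder and martingale terms via Lemma~\ref{lemme:gesttermesreste}-type estimates, reassemble the composition chains $\varphi_{m_1}\circ\cdots\circ\varphi_{m_k}$ of total order $\ell$ into $\Psi_\ell$, and for $(c)$ retain exactly the increments $\Delta M_k^{(2,g)}$, $\Delta M_k^{(3,g)}$ and $\gamma_k^{3/2}(\sigma g_2')(\bar X_{k-1})U_k$ in $N_n$. The only cosmetic differences are that you fix the Taylor depth $L$ once instead of adapting it level by level, and you replace the truncation leaves $\nu_n^{\gamma^{p},\gamma}(\psi)$ by $\nu(\psi)$ via a quantitative one-step $L^2$ bound, where the paper invokes the $a.s.$ convergence of Proposition~\ref{prop:convergence} upgraded to $L^2$ by uniform integrability; both are valid.
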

\begin{Remarque} The  first expansion is adapted to the proof of Theorem~\ref{theo:CLT}$(a)$, the second one to  Theorem~\ref{theo:CLT}$(b)$ and $(c)$  and Theorem~\ref{L2theo}$(a)$. Statement $(c)$ is written in view of Theorem~\ref{L2theo}$(b)$ where  one needs to handle the second order term of the asymptotic expansion of the $MSE$. Note that the bias term of order $R+2$ in $(c)$ will contribute to $\widetilde m_f(\bar q,R)$ in Theorem~\ref{L2theo}$(b)$. At this stage, it can be justified by the following remark:  when $a=1/(2R+1)$,
$$\frac{\Gamma_n^{(\ERR+1)} }{\Gamma_n}\frac{\Gamma_n^{(\ERR+2)} }{\Gamma_n}\overset{n\rightarrow+\infty}{\sim} \left(\frac{2R}{2R+1}\right)^2 \frac{1}{n}.$$
As concerns the contribution of the martingale correction $\Delta N_n$, we refer to   Proposition~\ref{prop:contribmartnun} for details. Finally, remark that all the negligible terms are given with the $L^2$-norm. For Theorem~\ref{theo:CLT},  ``$o_{\PE}$'' is enough.
\end{Remarque}

%
%
%

\begin{proof}  $(a)$ and $(b)$: Let $R\ge 2$ be an integer. Let us consider the decomposition given by~\eqref{eq:Deltag} in Lemma~\ref{lemme:decomp1}. When $(\gamma_n)_{n\ge 1}=\eta=(\gamma_n)_{n\ge 1}$, $L=R$ and $g={\cal Q}f$, we get  
\begin{align}
\nu_n^\gamma(f)-\nu(f)-\sum_{\ell=1}^{\ERR}\frac{\Gamma_n^{(\ell)} }{\Gamma_n}\nu(\varphi_{\ell}(f))=& \frac{g(\bar{X}_0)-g(\bar{X}_n)}{\Gamma_n} +\sum_{\ell=2}^{\ERR}\frac{\Gamma_n^{(\ell)}}{\Gamma_n}\left(\nu_n^{\gamma^{\ell}, \gamma}(\varphi_{\ell}(f))-\nu(\varphi_{\ell}(f))\right)\label{eq:54}\\
&+\frac{\Gamma_n^{(\ERR+1)}}{\Gamma_n}\nu_n^{ \gamma^{\ERR+1},\gamma}(\varphi_{\ERR+1}(f))+\frac{M_n^{1,g}}{\Gamma_n}\nonumber\\
&+\frac{1}{\Gamma_n}\sum_{k=1}^n \left(\sum_{i=2}^3\Delta M_k^{(i,g)}+\Delta R_{k,R}^{(1,g)}+\Delta R_{k,R}^{(2,g)}+\Delta R_{k,R}^{(3,g)}\right).\nonumber
\end{align}
By Lemma~\ref{lemme:gesttermesreste}$(i)$ applied with $(\eta_n)=(\gamma_n)$,
$$
\Big\|\frac{g(\bar{X}_0)-g(\bar{X}_n)}{\Gamma_n}\Big\|_2\le \frac{C}{\Gamma_n}$$
As well, by Lemma~\ref{lemme:gesttermesreste}$(ii)$applied for different choices of $(\theta_n)$, $h$ and $(Z_n)_{n\ge1}$, we have 
$$\Big\|\frac{1}{\Gamma_n}\sum_{k=1}^n \left(\Delta M_k^{(2,g)}+\Delta M_k^{(3,g)}+\Delta R_{k,R}^{(1,g)}\right)\Big\|_2\le C\frac{\sqrt{\Gamma_n^{(2)}}}{\Gamma_n}.$$
Finally, Lemma~\ref{lemme:gesttermesreste}$(iii)$ and $(iv)$ are  adapted to manage $\Delta R_{k,R}^{(2,g)}$ and $\Delta R_{k,R}^{(3,g)}$ respectively. This yields 
$$\Big\|\frac{1}{\Gamma_n}\sum_{k=1}^n \left(\Delta R_{k,R}^{(2,g)}+\Delta R_{k,R}^{(3,g)}\right)\Big\|_2\le C \left(\frac{\Gamma_n^{(R+2)}}{\Gamma_n}+ \frac{\Gamma_n^{(R+\frac{3}{2})}}{\Gamma_n}\right)\le C\frac{\Gamma_n^{(R+\frac{3}{2})}}{\Gamma_n}.$$
The above terms are thus negligible in expansions $(a)$ and $(b)$. As concerns $\nu_n^{ \gamma^{\ERR+1},\gamma}(\varphi_{\ERR+1}(f))$, one can  deduce from the polynomial growth of $\varphi_{\ERR+1}(f)$ and from~\eqref{controleLP22} that there exists $C>0$ such that
$$\forall n\ge1,\quad \Big\|\nu_n^{ \gamma^{\ERR+1},\gamma}(\varphi_{\ERR+1}(f)) \Big\|_2\le C.$$
This means that this term is negligible in the expansion $(a)$. In $(b)$, $(\gamma^{\ERR+1}_n,\gamma_n)$ is averaging so that
by Proposition~\ref{prop:convergence},  
$$\nu_n^{ \gamma^{\ERR+1},\gamma}(\varphi_{\ERR+1}(f))\xrightarrow{n\rightarrow+\infty}\nu(\varphi_{\ERR+1}(f))\quad a.s.$$
But using again~\eqref{controleLP22}, one checks that there is a $\delta>0$ such that   $(\|\nu_n^{ \gamma^{\ERR+1},\gamma}(\varphi_{\ERR+1}(f))\|_{2+\delta})_n$ is a bounded sequence. Thus,
an uniform integrability argument yields that
$$\nu_n^{ \gamma^{\ERR+1},\gamma}(\varphi_{\ERR+1}(f))\xrightarrow{n\rightarrow+\infty}\nu(\varphi_{\ERR+1}(f))\quad \textnormal{in $L^2$}.$$
But for any $\ell$, $\varphi_{\ell}$ is the component corresponding to $k=1$ in the definition~\eqref{eq:Phiell} of $\Psi_\ell$.   In $(b)$, $\nu(\varphi_{\ERR+1}(f))$ will thus contribute to $\nu(\Psi_{R+1})$. As well,  the terms $\nu(\varphi_\ell)$, $\ell=2,\ldots,R$ exhibited in this first expansion   will certainly contribute to $\nu(\Psi_\ell)$, $\ell=2,\ldots,R$. \smallskip

\noindent Now, we focus on the second bias term of the right-hand side  of~\eqref{eq:54}. More precisely, for each $\ell\in\{2,\ldots,\ERR\}$, we have  to repeat the previous procedure: we apply the expansion~\eqref{eq:Deltag} of Lemma~\ref{lemme:decomp1}
with $\eta=(\gamma_n^\ell)_{n\ge1}$, $L=R-\ell+1$, $f_\ell=\varphi_\ell$ and $g_\ell={\cal Q}\varphi_\ell$ (defined above). After several transformations, this yields

\begin{align}
\frac{\Gamma_n^{(\ell)}}{\Gamma_n}&\left(\nu_n^{\gamma^\ell,\gamma}(f_\ell)-\nu(f_\ell)\right)-\sum_{m=2}^{R-\ell+1}\frac{\Gamma_n^{(\ell+m-1)}}{\Gamma_n}\nu(\varphi^{[1]}_m\circ\varphi^{[1]}_{\ell}(f))= -\frac{1}{\Gamma_n}\sum_{k=1}^n \gamma_k^{\ell-1}\Delta  {\cal Q}\varphi_\ell(\bar{X}_k)\nonumber\\
&+\sum_{m=2}^{R-\ell+1}\frac{\Gamma_n^{(\ell+m-1)}}{\Gamma_n}\left(\nu_n^{\gamma^{\ell+m-1}, \gamma}-\nu\right)(\varphi_m\circ\varphi_{\ell}(f))\label{diuouis}\\
&+\frac{\Gamma_n^{(R+1)}}{\Gamma_n}\nu_n^{\gamma^{R+1}, \gamma}(\varphi_{R-\ell+2}\circ\varphi_{\ell}(f))\nonumber\\
&+\frac{1}{\Gamma_n}\sum_{k=1}^n\gamma_k^{\ell-1} \left(\sum_{i=1}^3\Delta M_k^{(i,g_\ell)}+\Delta R_{k,R-\ell+1}^{(i,g_\ell)}\right).\nonumber
\end{align}
Applying again Lemma~\ref{lemme:gesttermesreste} allows us to control the $L^2$-norm of the negligible terms:
$$\Big\|\frac{1}{\Gamma_n}\sum_{k=1}^n \gamma_k^{\ell-1}\Delta  {\cal Q}\varphi_\ell(\bar{X}_k)\Big\|_2\le \frac{C \gamma_1^{\ell-1}}{\Gamma_n}$$
and 
$$\Big\|\frac{1}{\Gamma_n}\sum_{k=1}^n\gamma_k^{\ell-1} \left(\sum_{i=1}^3\Delta M_k^{(i,g_\ell)}+\Delta R_{k,R-\ell+1}^{(i,g_\ell)}\right)\Big\|_2 \le C\frac{\sqrt{\Gamma_n^{(2\ell-1)}}\vee\Gamma_n^{(R+\frac{3}{2}+\ell-1)}}{\Gamma_n}.$$
Again, the penultimate term of the previous decomposition is negligible for expansion $(a)$ and satisfies the following convergence property when $(\gamma^{R+1},\gamma)$ is averaging:
$$
\frac{\Gamma_n}{\Gamma_n^{(R+1)}}\left(\frac{\Gamma_n^{(R+1)}}{\Gamma_n}\nu_n^{\gamma, \gamma^{L+1}}\big(\varphi^{[1]}_{R-\ell+2}\circ\varphi^{[1]}_{\ell}(f)\big)\right)
\xrn{n\nrn}\nu\big(\varphi^{[1]}_{L-\ell+2}\circ\varphi^{[1]}_{\ell}(f)\big)\quad \textnormal{$a.s.$ and in $L^2$.}
$$
This brings a second ``contribution'' to $\nu(\Psi_{R+1})$. 


\noindent Finally, it remains to consider for every $\ell\in\{2,\ldots,R\}$ each term  of~\eqref{diuouis}. Setting $\ell=m_1$, $m=m_2$,   the sequel of the proof  consists in repeating the procedure until $k:=\inf\{i\,:\, m_1+\ldots m_i=R+i\}$. The result follows.

\smallskip
\noindent $(c)$ The proof  is based on the same principle but is slightly more involved since we aim at keeping all the terms which are going to play a role in the second order expansion of Theorem~\ref{L2theo}$(b)$. This implies to start the previous proof with $L=R+1$ (and in the second step with $L=R-\ell+2$). Furthermore, the main other difference comes from the martingale component. As a complement of $M_n^{(1,g)}$, one also keeps whole the martingale terms whose $L^2$-norm is not negligible with respect to $\sqrt{\frac{\Gamma_n^{(3)}}{\Gamma_n}}$. In short, this corresponds to the martingale increments with a factor $\gamma_k$ or $\gamma_k^{\frac 32}$. This yields the two martingale increments $\Delta M_k^{(2,g)}$ and $\Delta M_k^{(3,g)}$ of the first expansion but also the dominating martingale increment of the second expansion above : $\gamma_k\Delta M_k^{(1,g_\ell)}$. The result follows.
%
\end{proof}
\begin{lemme}\label{lemme:gesttermesreste} Assume $\mathbf{(S)}$.  Let $h$ be a smooth function with polynomial growth.   We know from Proposition~\ref{prop:convergence} that, for every $p\!\in (0,+\infty)$,
\begin{equation}\label{controleLP22} 
C_{h,p}= \sup_{n\ge1}\|h(X_n)\|_p<+\infty.
\end{equation}
Then, 

\smallskip
\noindent (i) If $(\eta_n/\gamma_n)_{n\ge1}$ is a non-increasing sequence of real numbers, 
$$ 
\Big\|\sum_{k=1}^n \frac{\eta_k}{\gamma_k}\Delta h(\bar{X}_k)\Big\|_2\le C_{h,2}\frac{\eta_1}{\gamma_1}.
$$
(ii) If $(Z_k)_{k\ge1}$ is a sequence of $i.i.d$ centered random variables with finite variance, then for any deterministic sequence $(\theta_k)_{k\ge0}$,
$$\Big\| \sum_{k=1}^n \theta_k h(\bx_{k-1})Z_k\Big\|_2\le C_{h,2}\|Z_1\|_2 \sqrt{\sum_{k=1}^n \theta_k^{2}}.$$
(iii) For any sequence $(\theta_k)_{k\ge1}$ of real numbers,
$$
\Big\| \sum_{k=1}^n\theta_k h(\bx_{k-1})\Big\|_2\le C_{h,2} \sum_{k=1}^n|\theta_k|
$$
(iv) For any sequence $(\theta_k)_{k\ge1}$ of real numbers and any $r>0$, there exists a real constant $C= C_{r,b,\sigma, h,\gamma}$  such that 
$$
\Big\| \sum_{k=1}^n\theta_k \sup_{u\in[0,1]} |h(\bx_{k-1}+u\Delta \bx_{k})||\Delta \bx_k|^r\Big\|_2\le C\sum_{k=1}^n |\theta_k|\gamma_k^{\frac r2}.
$$
\end{lemme}
\begin{proof} Using that $(\eta_n/\gamma_n)_{n\ge1}$ is a non-increasing sequence, we have
$$
\left|\sum_{k=1}^n \frac{\eta_k}{\gamma_k}\Delta h(\bar{X}_k)\right|=\frac{\eta_1}{\gamma_1} |h(\bar{X}_0)|+\sum_{k=1}^{n-1}\left(\frac{\eta_{k}}{\gamma_{k}}-\frac{\eta_{k+1}}{\gamma_{k+1}}\right) |h(\bx_{k})|+\frac{\eta_n}{\gamma_n} |h(\bar{X}_n)|
$$
so that
\[
\left\|\sum_{k=1}^n \frac{\eta_k}{\gamma_k}\Delta h(\bar{X}_k)\right\|_2\le C_{h,2}\left(\frac{\eta_1}{\gamma_1}   +\sum_{k=1}^{n-1}\left(\frac{\eta_{k}}{\gamma_{k}}-\frac{\eta_{k+1}}{\gamma_{k+1}}\right) +\frac{\eta_n}{\gamma_n} \right)= C_{h,2}\frac{\eta_1}{\gamma_1}.
\]
This concludes the proof of $(i)$. Items $(ii)$ and $(iii)$ are straightforward consequences of the fact that $\sup_{n\ge1}\ES[|h(X_n)|^2]<+\infty$. For $(iv)$, the polynomial growth of $h$ implies that there exists $p>0$ and a constant $C>0$ such that for any $x,y\in\ER^d$ ,
$$
\sup_{u\in [0,1]}|h(x+uy)|\le C(1+|x|^p+|y|^p).$$
Using that $b$ and $\sigma$ are sub-linear functions and  Minkowski's Inequality
$$	
\Big\|\sup_{u\in[0,1]} |h(\bx_{k-1}+u\Delta \bx_{k})||\Delta \bx_k|^r\Big\|_2\le C \big(1+  \| |X_{k-1}|^{p}\|_4+\| |\Delta X_k|^{p}\|_4\big) \||\Delta X_{k}|^{r}\|_4\le  \tilde{C}\gamma_k^{\frac{r}{2}}
$$ 
The last statement follows using again Minkowski's Inequality.
\end{proof}
\subsection{Error expansion of  the  correcting levels}
For a given sequence $\gamma:=(\gamma_n)$, let us denote by $(\bar{X}_k)_{k\ge 0}$ and $(\bar{Y}_k)_{k\ge 0}$ the two Euler schemes of the diffusion $(X_t)_{t\ge 0}$ driven by the same Brownian motion $W$ and with the step sequences $(\gamma_n)$ and 
$(\gamma_n/M)$ respectively.  We then define a sequence of empirical measures  $(\mu_n^{M,\gamma})$ by
\begin{align*}
 \mu^{M,\gamma}_n(dx)= \frac{1}{\Gamma_n} \sum_{k=1}^n \left(\left(\sum_{m=0}^{M-1}\frac{\gamma_k}{M} \delta_{{\bar Y}_{M(k-1)+m}}\right)- \gamma_k\delta_{{\bar X}_{k-1}}\right),\; n\ge 1.
\end{align*}
By the definition~\eqref{eq:defcorterm}, one first notes that for $r=2,\ldots,R$, $\mu^{(r,M)}_n= \mu^{M,\gamma^{(r)}}_n$   built with the Euler schemes  $\bar X^{(r)}$ and $\bar Y^{(r)}$ (keep in mind that $\gamma^{(r)}_k=\frac{\gamma_k}{M^{r-2}}$). As a consequence,    expanding   $(\mu^{M,\gamma}_n(f))_{n\ge 1}$ will elucidate the behavior of the refined levels in the {\bf ML2Rgodic} procedure.

\smallskip
\noindent In the proposition below, we thus state a result similar to Proposition~\ref{prop:Multistep} but for the  sequence $(\mu_n^{M,\gamma}(f))_{n\ge1}$.
\begin{prop}[Bias error expansion for the refined levels] \label{prop:dvpcorrectiveterms} Assume $\mathbf{(S)}$, $\mathbf{(P)}$ and uniqueness of the invariant distribution $\nu$ of the diffusion is unique. Let $\ERR\!\in \EN^*$, $\ERR\ge 2$ and let $f\!\in{\cal C}^\infty(\ER,\ER)$ with polynomial growth and let $g={\cal Q}f$.   

\smallskip
\noindent $(a)$ Assume  that for every $\ell\!\in \{1,\ldots,\ERR\}$, the pair $(\gamma_n^{\ell}, \gamma_n)_{n\ge 1}$ is averaging. Then,
$$ 
\mu_n^{M,\gamma}(f)-\sum_{\ell=2}^{\ERR}(M^{1-\ell}-1)\frac{\Gamma_n^{(\ell)} }{\Gamma_n}\nu(\Psi_{\ell}(f))=-\frac{{\cal M}_n(\sigma g')}{\Gamma_n}+o_{L^2}\left(\frac{\sqrt{\Gamma_n}\vee\Gamma_n^{(\ERR)}}{\Gamma_n}\right)
$$
where for a   Borel function $\varphi:\ER^d\rightarrow\ER$
\[
{\cal M}_n(\varphi) = \sum_{k=1}^n \varphi(\bar X_{k-1}) \big(W_{\Gamma_{k}}-W_{\Gamma_{k-1}}\big)
-  \sum_{m=0}^{M-1}\varphi({\bar Y}_{M(k-1)+m})\big(W_{ \Gamma_{k-1+\frac{m+1}{M}} } -W_{\Gamma_{k-1+\frac mM}}\big).
 \]
\noindent $(b)$ If furthermore, the pair $(\gamma_n^{R+1}, \gamma_n)_{n\ge 1}$ is averaging, then the following sharper expansion also holds:
\begin{eqnarray*}
 \mu_n^{M,\gamma}(\omega,f)-\sum_{\ell=2}^{\ERR}(M^{1-\ell}-1)\frac{\Gamma_n^{(\ell)} }{\Gamma_n}\nu(\Psi_{\ell}(f))&=&-\frac{{\cal M}_n(\sigma g')}{\Gamma_n}+\left(M^{-R}-1\right)\frac{\Gamma_n^{(\ERR+1)} }{\Gamma_n}\nu\big(\Psi_{\ERR+1}(f)\big)\\
 &&+o_{L^2}\left(\frac{\sqrt{\Gamma_n^{(2)}}\vee\Gamma_n^{(\ERR+1)}}{\Gamma_n}\right).
\end{eqnarray*}
$(c)$ The following sharper expansion also holds when $(\gamma_n^{R+2}, \gamma_n)_{n\ge 1}$ is averaging :
\begin{align*}
&\mu_n^{M,\gamma}(f)-\sum_{\ell=2}^{\ERR}(M^{1-\ell}-1)\frac{\Gamma_n^{(\ell)} }{\Gamma_n}\nu\big(\Psi_{\ell}(f)\big)=-\frac{{\cal M}_n(\sigma g')+{\cal N}_n(\frac{1}{2}\sigma^2 g'')}{\Gamma_n}\\
&+\left(M^{-R}-1\right)\frac{\Gamma_n^{(\ERR+1)} }{\Gamma_n}\nu\big(\Psi_{\ERR+1}(f)\big) +\left(M^{-R-1}-1\right)\frac{\Gamma_n^{(\ERR+2)} }{\Gamma_n}\nu\big(\Psi_{\ERR+2}(f)\big)
+o_{L^2}\left(\frac{\sqrt{\Gamma_n^{(2)}}\vee\Gamma_n^{(\ERR+2)}}{\Gamma_n}\right),
\end{align*}
where, for a   Borel function $\varphi:\ER^d\rightarrow\ER$,
\[
{\cal N}_n(\varphi) = \sum_{k=1}^n \varphi(\bar X_{k-1}) \Big(\big(W_{\Gamma_{k}}-W_{\Gamma_{k-1}}\big)^2-\gamma_k\Big)
-  \sum_{m=0}^{M-1}\varphi({\bar Y}_{M(k-1)+m})\left(\big(W_{ \Gamma_{k-1+\frac{m+1}{M}} } -W_{\Gamma_{k-1+\frac mM}}\big)^2-\frac{\gamma_k}{M}\right).
 \]

\end{prop}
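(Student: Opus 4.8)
The cornerstone of the proof is the remark that the correcting empirical measure is nothing but a difference of two \emph{coarse-type} occupation measures attached to one and the same Poisson solution. Writing $\widetilde\gamma$ for the refined step sequence $\widetilde\gamma_{M(k-1)+m+1}=\gamma_k/M$ ($m=0,\dots,M-1$, $k\ge 1$), so that $\bar Y$ is exactly the Euler scheme with step $\widetilde\gamma$, one has the identity
\[
\mu_n^{M,\gamma}(f)=\nu^{\widetilde\gamma}_{Mn}(f)-\nu^{\gamma}_n(f),
\]
because $\sum_{j=1}^{Mn}\widetilde\gamma_j=\sum_{k=1}^n\gamma_k=\Gamma_n$ and $\frac{\gamma_k}{M}\sum_{m=0}^{M-1}f(\bar Y_{M(k-1)+m})=\sum_{j=M(k-1)+1}^{Mk}\widetilde\gamma_j f(\bar Y_{j-1})$. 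First I would record the elementary scaling identities $\widetilde\Gamma_{Mn}=\Gamma_n$ and $\widetilde\Gamma^{(\ell)}_{Mn}=\sum_{k=1}^n M(\gamma_k/M)^{\ell}=M^{1-\ell}\Gamma_n^{(\ell)}$, and check that $(\widetilde\gamma^{\ell},\widetilde\gamma)$ is averaging whenever $(\gamma^{\ell},\gamma)$ is (both decay polynomially with the same exponent $a$), so that Proposition~\ref{prop:Multistep} applies verbatim to $\nu^{\widetilde\gamma}_{Mn}(f)$ along the subsequence $Mn$.

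Second, I would apply Proposition~\ref{prop:Multistep}$(a)$ (resp. $(b)$, $(c)$) to each of $\nu^{\widetilde\gamma}_{Mn}(f)$ and $\nu^{\gamma}_n(f)$ and subtract. Crucially the same Poisson solution $g=\mathcal Q f$ and the same functions $\Psi_\ell$ enter both expansions, since the diffusion (hence $\mathcal L$, $\nu$ and the $\Psi_\ell$) is unchanged and only the step differs. In the difference the constant $\nu(f)$ cancels and the bias term of order $\ell$ becomes
\[
\Big(\frac{\widetilde\Gamma^{(\ell)}_{Mn}}{\widetilde\Gamma_{Mn}}-\frac{\Gamma_n^{(\ell)}}{\Gamma_n}\Big)\nu(\Psi_\ell(f))=(M^{1-\ell}-1)\frac{\Gamma_n^{(\ell)}}{\Gamma_n}\nu(\Psi_\ell(f)),
\]
which is exactly the announced coefficient, and at orders $R+1$, $R+2$ yields the factors $M^{-R}-1$ and $M^{-R-1}-1$. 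The residual $o_{L^2}$ terms recombine using $\widetilde\Gamma^{(\ell)}_{Mn}\le\Gamma_n^{(\ell)}$ and $\widetilde\Gamma_{Mn}=\Gamma_n$.

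The delicate point is the treatment of the martingale part, and this is where the common driving Brownian motion is essential. The leading first-order martingales are $M_n^{(1,g)}=\sum_k(\sigma g')(\bar X_{k-1})(W_{\Gamma_k}-W_{\Gamma_{k-1}})$ and its refined analogue $\widetilde M_{Mn}^{(1,g)}=\sum_k\sum_m(\sigma g')(\bar Y_{M(k-1)+m})(W_{\Gamma_{k-1+(m+1)/M}}-W_{\Gamma_{k-1+m/M}})$; since $\sqrt{\gamma_k}\,U_k=W_{\Gamma_k}-W_{\Gamma_{k-1}}$ one reads off $M_n^{(1,g)}-\widetilde M_{Mn}^{(1,g)}=\mathcal M_n(\sigma g')$, so the difference of expansions produces precisely $-\mathcal M_n(\sigma g')/\Gamma_n$. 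For part $(c)$ the same recombination must be pushed one order further: the second-order increments $\Delta M^{(2,g)}_k=\frac12(\sigma^2 g'')(\bar X_{k-1})\big((W_{\Gamma_k}-W_{\Gamma_{k-1}})^2-\gamma_k\big)$ and their refined counterparts merge, through $W_{\Gamma_k}-W_{\Gamma_{k-1}}=\sum_m(W_{\Gamma_{k-1+(m+1)/M}}-W_{\Gamma_{k-1+m/M}})$, into $\mathcal N_n(\frac12\sigma^2 g'')$; the remaining third-order increments ($\Delta M^{(3,g)}$ and the $\gamma^{3/2}(\sigma g_2')U$ contribution carried by $N_n$) have $L^2$-norm $O(\sqrt{\Gamma_n^{(3)}})=o(\sqrt{\Gamma_n^{(2)}})$ and are absorbed into the remainder, which explains why no further martingale survives in $(c)$.

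The main obstacle is therefore not a new estimate but the careful bookkeeping of the $L^2$-orders across the sharpening thresholds $\sqrt{\Gamma_n}$ in $(a)$ and $\sqrt{\Gamma_n^{(2)}}$ in $(b)$–$(c)$: one must verify, term by term via Lemma~\ref{lemme:gesttermesreste}, that every martingale and remainder contribution other than the explicitly retained $\mathcal M_n(\sigma g')$ (and, in part $(c)$, $\mathcal N_n(\frac12\sigma^2 g'')$) is negligible at the relevant scale. In particular, in the regime where the bias term $\Gamma_n^{(R+1)}/\Gamma_n$ dominates $\sqrt{\Gamma_n^{(2)}}/\Gamma_n$ — as for the step exponents $a\le 1/(2R)$ used in Theorems~\ref{theo:CLT} and~\ref{L2theo} — the quadratic-variation martingale $\mathcal N_n$ of order $\sqrt{\Gamma_n^{(2)}}$ is subsumed by the announced error in $(b)$, while it has to be kept explicitly in $(c)$. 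All the analytic inputs needed for this accounting are already supplied by Lemma~\ref{lemme:decomp1} and Lemma~\ref{lemme:gesttermesreste}.
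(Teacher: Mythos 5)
Your proposal follows essentially the same route as the paper: the authors likewise write $\mu_n^{M,\gamma}(f)=\big(\nu_{nM}^{\tilde\gamma^{2,M}}(\bar Y,f)-\nu(f)\big)-\big(\nu_n^{\gamma}(f)-\nu(f)\big)$, apply Proposition~\ref{prop:Multistep} to each term with the scaling identity $\sum_{k\le nM}(\tilde\gamma_k^{2,M})^{\ell}/\sum_{k\le nM}\tilde\gamma_k^{2,M}=M^{1-\ell}\Gamma_n^{(\ell)}/\Gamma_n$, and concatenate the martingale components into ${\cal M}_n(\sigma g')$ (and, for part $(c)$, retain only the $M^{(2,g)}$ piece of the corrective term $N_n$, the $\gamma_k^{3/2}$ increments being $O(\sqrt{\Gamma_n^{(3)}}/\Gamma_n)=o(\sqrt{\Gamma_n^{(2)}}/\Gamma_n)$ by Lemma~\ref{lemme:gesttermesreste}$(ii)$). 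Your account of the bias coefficients, of the recombination of the driving Brownian increments, and of the order bookkeeping matches the paper's argument.
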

\begin{proof}  With the notation introduced in~\eqref{eq:gamtildebb}. Set 
$$\nu_n^{\tilde{\gamma}^{2,M}}(\bar{Y},f)=\left(\sum_{k=1}^{n}\tilde{\gamma}_k^{2,M}\right)^{-1}\sum_{k=1}^n\tilde{\gamma}_k^{2,M} \delta_{\bar{Y}_{k-1}}.$$
 One can check that for every $n\ge1$, 
$$ \mu_n^{M,\gamma}(\omega,f)=\left(\nu_{nM}^{\tilde{\gamma}^{2,M}}(\bar{Y},f)-\nu(f)\right)-\left(\nu_n^\gamma(f)-\nu(f)\right).$$
For $(a)$ and $(b)$, it remains now to apply  Proposition~\ref{prop:Multistep}$(a)$ and $(b)$ to both terms in the  right-hand side of the above equation (with step $\tilde{\gamma}^{2,M}$ for $\nu_{nM}^{\tilde{\gamma}^{2,M}}(\bar{Y},f)$). The result follows by concatenating martingale components  and by noting that for any integer $\ell\ge 2$,
$$\frac{\sum_{k=1}^{nM}(\tilde{\gamma}_k^{2,M})^\ell}{\sum_{k=1}^{nM}\tilde{\gamma}_k^{2,M}}= \frac{M^{1-\ell}\Gamma_n^{(\ell)}}{\Gamma_n}.$$
For the proof of $(c)$, the only difference with  Proposition~\ref{prop:Multistep}$(c)$ is that one only keeps the martingale increment $ M_n^{(2,g)}$ of the corrective term $N_n$. More precisely, the terms of $N_n$ appearing with a factor $\gamma_n^{\frac{3}{2}}$
are here viewed as negligible terms. Using Lemma~\ref{lemme:gesttermesreste}$(ii)$, one easily check that these martingale corrections are bounded in $L_2$ by $\sqrt{\Gamma_n^{(3)}}/{\Gamma_n}$ (which is $o(\sqrt{\Gamma_n^{(2)}}/{\Gamma_n})$).
\end{proof}
\begin{Remarque} The fact that we keep less martingale terms in Expansion $(c)$ can be understood as follows: in section~\eqref{prop:contribmartmun}, we will show that  the apparently dominating martingale component ${\cal M}_n(\sigma g')$
is in fact negligible at the first order of the expansion under confluence assumptions. This implies that the covariance terms induced by the product of this martingale and the martingale corrections appearing with a factor $\gamma_k^{\frac{3}{2}}$ in $N_n$ (see Proposition~\ref{prop:Multistep}) will be also negligible at a second order.
\end{Remarque}
\section{Rate of convergence for the  dominating martingales}\label{sec:dominatingmartingales}\label{sec:proof2}
In the continuity of Propositions~\ref{prop:Multistep} and~\ref{prop:dvpcorrectiveterms}, we now propose to elucidate the weak or $L^2$ rate of convergence of  the dominating martingales , that is the martingales coming out in the   above error expansions established in the former section. 
\subsection{The dominating martingale term involved in  $\nu_n^\gamma(f)-\nu(f)$}
We begin by stating some asymptotic results for the first and second order martingales  $(M_n^{(1,g)})_{n\ge 1}$ and $(N_n)_{n\ge 1}$ which appear in the expansions of Proposition~\ref{prop:Multistep}. The associated statements describe the asymptotic martingale contributions of the first (dominating) term of the {\bf ML2Rgodic} procedure. With the view to Theorem~\ref{theo:CLT}, the first statement concerns the convergence in distribution of the dominating martingale $(M_n^{(1,g)})_{n\ge 1}$ whereas the second and third ones  are crucial steps in the proof of Theorem~\ref{L2theo} $(a)$ and $(b)$ respectively.

\begin{prop} \label{prop:contribmartnun}Assume $\mathbf{(S)}$ and $\mathbf{(P)}$.  Let $g={\cal Q}f$.Then,

\smallskip
\noindent $(a)$ 
\[
\frac{1}{\sqrt{\Gamma_n}}  M_n^{(1,g)} \stackrel{(\R)}{\Longrightarrow}{\cal N}\Big(0;\int_{\R}(\sigma g')^2d\nu\Big).
\]

\noindent $(b)$ 
\[
\ES\left[\frac{(M_n^{(1,g)})^2}{\Gamma_n}\right]=\int_{\R}(\sigma g')^2d\nu+o(1)\quad\textnormal{as $n\rightarrow+\infty$}.
\]
\noindent $(c)$ If $(\gamma_n,\gamma_n^2)$ is averaging, 
\[
\ES\left[\frac{(M_n^{(1,g)}+ N_n)^2}{\Gamma_n}\right]=\int_{\R}(\sigma g')^2d\nu+\frac{\Gamma_n^{(2)}}{\Gamma_n}\Big(\sigma_{2,1}^2(f)+o(1)\Big)
\quad\textnormal{as $n\rightarrow+\infty$},
\]
where 
\begin{equation}\label{eq:contribsecondordreterme1}
\sigma_{2,1}^2(f)=\int_{\ER}\left[ \varphi_2((\sigma g')^2)+\frac{1}{2}(\sigma^2 g'')^2+(\sigma g')\big(g^{(3)}\sigma^3+2(\sigma g_2')\big)\right]d\nu,
\end{equation}
where $g_2={\cal Q}\varphi_2(f)$,  $i.e.$ the  solution to $\varphi_2(f)-\nu(\varphi_2(f))=-{\cal L} g_2$.
\end{prop}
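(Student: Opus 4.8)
Throughout I work with the filtration $\mathcal F_n=\sigma(\bar X_0,U_1,\dots,U_n)$. All the increments isolated in Lemma~\ref{lemme:decomp1} have the form $\theta(\bar X_{k-1})\,P(U_k)$ with $P$ a polynomial satisfying $\E[P(U_k)]=0$ (namely $P(u)=u$ for $\Delta M_k^{(1,g)}$, $P(u)=u^2-1$ for $\Delta M_k^{(2,g)}$, and odd polynomials for $\Delta M_k^{(3,g)}$ and for $\gamma_k^{3/2}(\sigma g_2')(\bar X_{k-1})U_k$), so $M^{(1,g)}$, $N$ and $M^{(1,g)}+N$ are $(\mathcal F_n)$-martingales; in particular the time-increments of each are orthogonal, which reduces every second moment below to a sum of one-step conditional variances. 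The common mechanism behind the three items is that the normalized brackets of these martingales are, up to the Gaussian moments $\rho_m$, empirical measures $\nu_n^{\gamma^\ell,\gamma}$ of smooth functions with polynomial growth, to which I can apply the a.s. convergence of Proposition~\ref{prop:convergence} together with the uniform moment bound~\eqref{controleLP22}.

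For (a), I would prove a martingale central limit theorem for $\Gamma_n^{-1/2}M_n^{(1,g)}$. Since $\E[U_k^2]=1$, the conditional variance is $\E[(\Delta M_k^{(1,g)})^2\mid\mathcal F_{k-1}]=\gamma_k(\sigma g')^2(\bar X_{k-1})$, so the normalized bracket equals $\nu_n^{\gamma,\gamma}((\sigma g')^2)$, which converges a.s. to $\nu((\sigma g')^2)$ by Proposition~\ref{prop:convergence} (under $\mathbf{(P)}$, $(\sigma g')^2$ is smooth with polynomial growth). A conditional Lyapunov condition finishes the argument: for some $\delta>0$, $\E[|\Delta M_k^{(1,g)}|^{2+\delta}\mid\mathcal F_{k-1}]\le C\gamma_k^{1+\delta/2}|\sigma g'|^{2+\delta}(\bar X_{k-1})$, whence by~\eqref{controleLP22} the Lyapunov ratio is $O\big(\Gamma_n^{(1+\delta/2)}/\Gamma_n^{1+\delta/2}\big)$, which tends to $0$ for $\gamma_k=\gamzero k^{-a}$. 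As the limiting bracket is deterministic, the standard martingale CLT yields the announced Gaussian limit. For (b), orthogonality gives $\E[(M_n^{(1,g)})^2]=\sum_{k\le n}\gamma_k\E[(\sigma g')^2(\bar X_{k-1})]$, i.e. $\Gamma_n^{-1}\E[(M_n^{(1,g)})^2]=\E[\nu_n^{\gamma,\gamma}((\sigma g')^2)]$; the a.s. convergence of $\nu_n^{\gamma,\gamma}((\sigma g')^2)$ combined with its $L^{1+\delta}$-boundedness (from~\eqref{controleLP22} and Minkowski's inequality), hence uniform integrability, upgrades this to $L^1$ convergence and gives the claim.

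Item (c) is the substantial one. I would start from $\E[(M_n^{(1,g)}+N_n)^2]=\sum_{k\le n}\E[(\Delta M_k^{(1,g)}+\Delta N_k)^2]$ and compute the conditional variance $\E[(\Delta M_k^{(1,g)}+\Delta N_k)^2\mid\mathcal F_{k-1}]$ by expanding in powers of $\gamma_k$ using $\rho_2=1,\ \rho_4=3,\ \rho_6=15$ and the vanishing of odd moments. The crucial structural observation is that the two relevant orders feed in through two distinct averaging mechanisms. The leading $\gamma_k$-term is $\gamma_k(\sigma g')^2(\bar X_{k-1})$; after division by $\Gamma_n$ it gives $\E[\nu_n^{\gamma,\gamma}((\sigma g')^2)]$, which I would now expand to \emph{second} order via Proposition~\ref{prop:Multistep} applied to $(\sigma g')^2$: since $\Psi_2=\varphi_2$ and the leading martingale has zero mean, this produces $\nu((\sigma g')^2)+\frac{\Gamma_n^{(2)}}{\Gamma_n}\nu(\varphi_2((\sigma g')^2))+o(\Gamma_n^{(2)}/\Gamma_n)$ and is the origin of the $\varphi_2((\sigma g')^2)$ term. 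The genuine $\gamma_k^2$-terms of the conditional variance — arising from $(\Delta M_k^{(2,g)})^2$, from the cross term $\Delta M_k^{(1,g)}\Delta M_k^{(3,g)}$, and from $\Delta M_k^{(1,g)}\cdot\gamma_k^{3/2}(\sigma g_2')(\bar X_{k-1})U_k$ — assemble into $\gamma_k^2\Phi(\bar X_{k-1})$ for an explicit $\Phi$; since $(\gamma^2,\gamma)$ is averaging, $\Gamma_n^{-1}\sum_{k\le n}\gamma_k^2\E[\Phi(\bar X_{k-1})]=\frac{\Gamma_n^{(2)}}{\Gamma_n}\E[\nu_n^{\gamma^2,\gamma}(\Phi)]$ and the argument of (b) sends $\E[\nu_n^{\gamma^2,\gamma}(\Phi)]$ to $\nu(\Phi)$. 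Collecting the two contributions identifies the $\frac{\Gamma_n^{(2)}}{\Gamma_n}$-coefficient as $\nu(\varphi_2((\sigma g')^2))+\nu(\Phi)$, which I would then reduce to the stated form~\eqref{eq:contribsecondordreterme1}.

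The main obstacle is precisely the bookkeeping in (c). One has to check that only the $\gamma_k$- and $\gamma_k^2$-orders contribute: all $\gamma_k^{3/2}$ cross terms drop out by parity of the Gaussian moments, and the $\gamma_k^{5/2}$ and higher terms are $o(\Gamma_n^{(2)})$ after summation, which I would control uniformly through Lemma~\ref{lemme:gesttermesreste} and~\eqref{controleLP22}. Above all, one must keep rigorously separate the two sources of the $\frac{\Gamma_n^{(2)}}{\Gamma_n}$-term — the ``dynamical'' contribution $\nu(\varphi_2((\sigma g')^2))$ hidden in the second-order weak expansion of $\E[\nu_n^{\gamma,\gamma}((\sigma g')^2)]$, and the ``local'' contribution $\nu(\Phi)$ coming from the higher martingale increments — and then carry out the algebraic reduction (using the stationarity identity $\nu(\mathcal L\,\cdot)=0$) that casts their sum in the announced expression for $\sigma_{2,1}^2(f)$.
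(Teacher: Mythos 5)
Your proposal is correct and follows essentially the same route as the paper: the same reduction of the brackets to empirical measures $\nu_n^{\gamma,\gamma}$ and $\nu_n^{\gamma^2,\gamma}$ handled via Proposition~\ref{prop:convergence} and uniform integrability, the same two-source decomposition in $(c)$ (the second-order bias expansion of $\E[\nu_n^{\gamma,\gamma}((\sigma g')^2)]$ via Proposition~\ref{prop:Multistep} with $R=2$ yielding $\nu(\varphi_2((\sigma g')^2))$, plus the genuine $\gamma_k^2$-order conditional variances of the higher increments yielding the remaining terms of~\eqref{eq:contribsecondordreterme1}). The only cosmetic difference is that in $(a)$ you verify a conditional Lyapunov condition where the paper checks Lindeberg directly through a fourth-moment bound; since Lyapunov implies Lindeberg, this changes nothing.
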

\begin{Remarque} If $\gamma_n=\gamzero  n^{-\frac{1}{2R+1}}$, 
$$
\frac{1}{\Gamma_n}\overset{n\rightarrow+\infty}{\sim}\frac{2R}{(2R+1)\gamzero } n^{-\frac{2R}{2R+1}}\quad\textnormal{and}\quad \frac{\Gamma_n^{(2)}}{\Gamma_n}\overset{n\rightarrow+\infty}{\sim}\frac{2R}{(2R+1)n}.
$$
One thus retrieves the orders of the expansions established in Theorem~\ref{L2theo}.
\end{Remarque}
\begin{proof} $(a)$ Using Proposition~\ref{prop:convergence},
\begin{equation}\label{eq:pscrochet}
\frac{\langle M^{(1,g)}\rangle_n}{\Gamma_n}=\nu_n^\gamma\big((\sigma g')^2\big)\xrightarrow{n\rightarrow+\infty} \nu\big((\sigma g')^2\big)\quad a.s.
\end{equation}
Furthermore, by Cauchy-Schwarz inequality and~\eqref{controleLP22}, we have for every $\varepsilon>0$, 
$$
\sum_{k=1}^n\ES\left[ (\Delta M_k^{(1,g)})^21_{(\Delta M_k^{(1,g)})^2>\varepsilon}\right]\le \frac{1}{\varepsilon^2}\sum_{k=1}^n\ES\big[ (\Delta M_k^{(1,g)})^4\big]\le C\frac{\Gamma_n^{(2)}}{\Gamma_n^2}\xrightarrow{n\rightarrow+\infty}0.
$$
This second convergence implies that the so-called \textit{Lindeberg condition} is fulfilled. Then,  $(a)$ is a consequence of the CLT for martingale arrays (see~\cite[Corollary 3.1]{hall-heyde}).

\smallskip
\noindent $(b)$  By Jensen inequality, for a given function $f$,
$$
\ES\big[(\nu_n^\gamma(f))^2\big]\le \ES\big[\nu_n^\gamma(f^2)\big]
$$
and it follows again from Proposition~\ref{prop:convergence} and from the fact that $\sigma g'$ has (at most)  polynomial growth that 
\begin{equation}\label{eq:EIArg}
\sup_n\ES\big[(\nu_n^\gamma((\sigma g')^2))^2\big]\le \sup_n\ES\big[1+|\bar X_n|^r\big]<+\infty.
\end{equation}
owing to  $\mathbf{(S)}$ and~\eqref{controleLP22}. 
As a consequence, $\big(\nu_n^\gamma((\sigma g')^2)\big)_{n\ge 1}$ is a uniformly integrable sequence so that the convergence of  $(\nu_n^\gamma((\sigma g')^2))$ toward $\nu((\sigma g')^2)$
also holds in $L^1$. The second statement then follows from~\eqref{eq:pscrochet}.

\smallskip
\noindent $(c)$ First, using that $\ES[U_n(U_n^2-1)]=0$ and that $\ES[U_n^4]=1$, one can check that 
\begin{equation*}
\frac{1}{\Gamma_n}\ES\big[(M_n^{(1,g)}+N_n)^2\big]=\ES\big[\nu_n^{\gamma,\gamma}((\sigma g')^2)\big]+ \frac{\Gamma_n^{(2)}}{\Gamma_n}\ES\big[\nu_n^{\gamma^2,\gamma}(F)\big],
\end{equation*}
where 
$$
F(x)=\Big[\frac{1}{2}(\sigma^2 g'')^2+(\sigma g')(g^{(3)}\sigma^3+2(\sigma g_2'))\Big](x).
$$
On the one hand, since $(\gamma_n^2,\gamma_n)_{n\ge1}$ is averaging, we deduce from Proposition~\ref{prop:convergence} that 
$$\nu_n^{\gamma^2,\gamma}(F)\xrn{n\nrn}\nu(F)\quad a.s.$$
But using uniform integrability arguments similar to~\eqref{eq:EIArg}, the convergence also holds in $L^1$. 
On the other hand, let us focus on $\ES[\nu_n^{\gamma,\gamma}((\sigma g')^2)]$. We set $h=(\sigma g')^2.$ Using Proposition~\ref{prop:Multistep}$(a)$ (and the fact that $\Psi_2=\varphi_2$) with 
$R=2$, we have
$$ 
\nu_n^{\gamma}(h)-\nu(h)=\frac{M_n^{(1,{\cal Q}h)}}{\Gamma_n}+\frac{\Gamma_n^{(2)} }{\Gamma_n}\nu\big(\varphi_2(h)\big)+o_{L^2}\left(\frac{\sqrt{\Gamma_n}\vee\Gamma_n^{(2)}}{\Gamma_n}\right).
$$
By~\eqref{eq:pscrochet}, we deduce that 
$$
\ES\left[\frac{(M_n^{(1,g)}+N_n)^2}{\Gamma_n}\right]=\int_{\R}(\sigma g')^2d\nu +\frac{\Gamma_n^{(2)}}{\Gamma_n}\Big(\nu(\varphi_2(h)+F)+o(1)\Big).
$$
The last statement follows.
%
\end{proof}

\subsection{The dominating martingale in the error expansion of $(\mu_n^{M,\gamma}(f))_{n\ge 1}$}
In this section, we focus on the behavior of the martingale terms involved by the refined levels of the {\bf ML2Rgodic} procedure. Thus, this corresponds to the variance induced by this procedure. On a finite horizon, 
Euler schemes are pathwise close (in an $L^2$-sense for instance)  and this property implies one of the  important features of multilevel procedures: reducing the bias without increasing significantly  the variance. As mentioned before, on a long run scale, such a property is not true in general. More precisely, without additional assumptions, the martingale $({\cal M}_n)_{n\ge 1}$ defined in Proposition~\ref{prop:dvpcorrectiveterms} is {\em a priori} not negligible
compared to the one induced by the first term of the {\bf ML2Rgodic} procedure. However, this turns out to be true in presence of an  asymptotic confluence assumption. This is the first statement of the next proposition. In the second one, we go deeper in the 
analysis of the martingale contribution of $(\mu_n^{M,\gamma}(f))_{n\ge 1}$  under   a stronger confluence assumption. The second property will contribute only to Theorem~\ref{L2theo}$(b)$.

\begin{prop} \label{prop:contribmartmun} Assume $\mathbf{(S)}$ and $\mathbf{(P)}$. Let $h_1$ and $h_2$ be  locally Lipschitz functions with polynomial growth.

\smallskip
\noindent $(a)$ If $\mathbf{(C_w)}$ holds, then $\left(\frac{{\cal M}_n(h_1)}{\sqrt{\Gamma_n}}\right)_{n\ge 1}$ converges to $0$ in $L^2$.

\smallskip
\noindent $(b$) Assume $\mathbf{(C_s)}$ holds and that $(\gamma_n,\gamma_n^2)_n$ is averaging. Assume that $h_1$ is ${\cal C}^2$ and that $h_1$ and its derivatives have  polynomial growth. Then, the martingales 
$({\cal M}_n(h_1))$ and $({\cal N}_n(h_2))$ are orthogonal and
\[
\frac{1}{\Gamma_n^{(2)}}  \ES\left[\left({\cal M}_n(h_1)+{\cal N}_n(h_2)\right)^2\right]\xrightarrow{n\rightarrow+\infty}
\left(1-\frac{1}{ M}\right)\left[\frac 12\int_{\R}(h'_1\sigma)^2d\nu+2\int h_2^2 d\nu\right].
\]
In particular, when $h_1=\sigma g'$ and $h_2=\frac{1}{2} \sigma^2 g''$ (with $g={\cal Q}f$),  this variance is denoted by  $\sigma_{2,2}^2(f)$ which subsequently reads
\begin{equation}\label{eq:contribsecondordreterme2}
\sigma_{2,2}^2(f)= \left[\frac 12\int_{\R}(h'_1\sigma)^2d\nu+2\int h_2^2 d\nu\right]=\int \sigma^2\left((\sigma g'')^2+\sigma\sigma' g' g''+\frac 12 (\sigma'g')^2\right) d\nu.\end{equation}
\end{prop}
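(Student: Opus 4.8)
The plan is to treat $(a)$ and $(b)$ as two successively finer analyses of the same discrete martingale, organized around its \emph{predictable bracket} computed with respect to the refined filtration. Throughout, write $\Delta W_{k,m}=W_{\Gamma_{k-1+(m+1)/M}}-W_{\Gamma_{k-1+m/M}}$ for the $M$ sub-increments of $W$ inside the coarse interval $[\Gamma_{k-1},\Gamma_k]$, so that $\sum_{m=0}^{M-1}\Delta W_{k,m}=W_{\Gamma_k}-W_{\Gamma_{k-1}}$, and let $S_{k,m}=W_{\Gamma_{k-1+m/M}}-W_{\Gamma_{k-1}}=\sum_{j<m}\Delta W_{k,j}$ denote the partial sums, which satisfy $\ES[S_{k,m}^2]=m\gamma_k/M$.

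For $(a)$, I would first rewrite ${\cal M}_n(h_1)=\sum_{k=1}^n\sum_{m=0}^{M-1}\big(h_1(\bar X_{k-1})-h_1(\bar Y_{M(k-1)+m})\big)\Delta W_{k,m}$, which exhibits it as an $L^2$-martingale adapted to the refined filtration since each coefficient is measurable at the start of its sub-interval. Its bracket is therefore $\sum_{k,m}\frac{\gamma_k}{M}\big(h_1(\bar X_{k-1})-h_1(\bar Y_{M(k-1)+m})\big)^2$, so that $\frac{1}{\Gamma_n}\ES[{\cal M}_n(h_1)^2]=\frac{1}{\Gamma_n}\ES\langle{\cal M}(h_1)\rangle_n$. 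Using that $h_1$ is locally Lipschitz with polynomial growth together with the uniform moment bounds of Proposition~\ref{prop:convergence}, I would dominate the generic summand by $\ES\big[|\bar X_{k-1}-\bar Y_{M(k-1)+m}|^2(1+\mathrm{poly})\big]$ and split the difference into the intra-step displacement $\bar Y_{M(k-1)+m}-\bar Y_{M(k-1)}=O_{L^2}(\sqrt{\gamma_k})$ and the coarse-time gap $\bar X_{k-1}-\bar Y_{M(k-1)}$. The displacement part contributes $O(\Gamma_n^{(2)}/\Gamma_n)=o(1)$, while the gap is handled by the \emph{mean confluence} of the two schemes, $\frac{1}{\Gamma_n}\sum_k\gamma_k\,\ES[|\bar X_{k-1}-\bar Y_{M(k-1)}|^2]\to0$. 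This is where $\mathbf{(C_w)}$ enters: it is the two-scheme analogue of the diffusion-versus-Euler confluence recalled in the introduction, obtained by viewing $(\bar X,\bar Y)$ as a discretization of the two-point motion~\eqref{eq:dupdif2} whose weighted occupation measure concentrates on the diagonal (uniqueness of $\nu_\Delta$). Establishing this averaged confluence is the conceptual heart of $(a)$.

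For $(b)$ the normalization drops to $\Gamma_n^{(2)}$, so the next order must be extracted. I would put ${\cal N}_n(h_2)$ in refined-martingale form via the identity $\big(W_{\Gamma_k}-W_{\Gamma_{k-1}}\big)^2-\gamma_k=\sum_m\big((\Delta W_{k,m})^2-\tfrac{\gamma_k}{M}\big)+2\sum_m S_{k,m}\Delta W_{k,m}$, producing a ``diagonal'' part carrying the differences $h_2(\bar X_{k-1})-h_2(\bar Y_{M(k-1)+m})$ and an ``interaction'' part $2\sum_k h_2(\bar X_{k-1})\sum_m S_{k,m}\Delta W_{k,m}$. The cross bracket of ${\cal M}_n(h_1)$ and ${\cal N}_n(h_2)$ then splits sub-interval by sub-interval into a term proportional to the vanishing third Gaussian moment $\ES[\Delta W_{k,m}((\Delta W_{k,m})^2-\gamma_k/M)]$ and a term coming from the interaction part; establishing that the total cross contribution is negligible, so that $\ES[({\cal M}_n(h_1)+{\cal N}_n(h_2))^2]=\ES[{\cal M}_n(h_1)^2]+\ES[{\cal N}_n(h_2)^2]+o(\Gamma_n^{(2)})$, is one of the delicate points and leans on $\mathbf{(C_s)}$ and Lemma~\ref{lemme:gesttermesreste}. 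For the two diagonal brackets I would use the strong confluence to control $\bar X_{k-1}-\bar Y_{M(k-1)}$ at the finer scale and to isolate the leading intra-step contribution $-h_1'\sigma\,S_{k,m}$; the combinatorics reduce to the elementary sums $\sum_{m}\frac{\gamma_k}{M}\ES[S_{k,m}^2]=\frac{\gamma_k^2}{2}\big(1-\frac1M\big)$ and $\ES\big[(\sum_m S_{k,m}\Delta W_{k,m})^2\big]=\frac{\gamma_k^2}{2}\big(1-\frac1M\big)$, producing exactly the factor $1-\frac1M$. This gives $\frac12\big(1-\frac1M\big)\sum_k\gamma_k^2\,\ES[(h_1'\sigma)^2(\bar X_{k-1})]$ for ${\cal M}_n(h_1)$ and $2\big(1-\frac1M\big)\sum_k\gamma_k^2\,\ES[h_2^2(\bar X_{k-1})]$ for ${\cal N}_n(h_2)$. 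Dividing by $\Gamma_n^{(2)}$ and replacing the $\gamma^2$-weighted empirical averages by their $\nu$-integrals through Proposition~\ref{prop:convergence} (as in the proof of Proposition~\ref{prop:contribmartnun}$(c)$) yields the announced limit, and specializing $h_1=\sigma g'$, $h_2=\tfrac12\sigma^2 g''$ and expanding $(h_1'\sigma)^2=\sigma^2(\sigma'g'+\sigma g'')^2$ by the chain rule recovers~\eqref{eq:contribsecondordreterme2}.

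The step I expect to be the main obstacle is the fine control, under $\mathbf{(C_s)}$, of the coarse-time gap $\bar X_{k-1}-\bar Y_{M(k-1)}$ together with the interaction terms at the scale $\Gamma_n^{(2)}$: one must verify that the fluctuations of this gap and the off-diagonal cross contributions do not perturb the clean leading constants and that the $\gamma^2$-weighted ergodic replacement remains valid. This is precisely where exponential contraction in the $S$-metric, rather than the mere statistical confluence $\mathbf{(C_w)}$, is indispensable, the whole point being that the variance injected by the refined levels stays of the negligible order $\Gamma_n^{(2)}$ rather than $\Gamma_n$.
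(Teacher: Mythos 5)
Your part $(a)$ is essentially the paper's argument: both reduce to the predictable bracket $\sum_{k,m}\frac{\gamma_k}{M}\big(h_1(\bar X_{k-1})-h_1(\bar Y_{M(k-1)+m})\big)^2$ and kill it at scale $\Gamma_n$ because the weighted occupation measure of the coupled pair converges to $\nu_\Delta$, which is carried by the diagonal; the paper applies this directly to $(x,y)\mapsto(h_1(x)-h_1(y))^2$ rather than first majorizing by $|x-y|^2$ times a polynomial, but the mechanism is the same.

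The genuine gap is in part $(b)$, exactly at the point you yourself flag as the main obstacle: you need $\frac{1}{\Gamma_n^{(2)}}\sum_k\gamma_k\,\ES\big[|\bar X_{k-1}-\bar Y_{M(k-1)}|^2\big]\to0$, i.e.\ confluence of the two schemes at the \emph{finer} scale $\Gamma_n^{(2)}/\Gamma_n$, and you do not supply the mechanism by which $\mathbf{(C_s)}$ delivers it. The paper (Lemma~\ref{lem:mdevarphi}) never compares the two schemes directly: it inserts the continuous diffusion $X$ driven by the same Brownian motion and splits ${\cal M}={\cal M}^{(1)}-\sum_m{\cal M}^{(2,m)}+\sum_m{\cal M}^{(3,m)}$, where the first two families compare each scheme to $X$ and the third compares $X$ to itself at two nearby times. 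The scheme-vs-diffusion gaps are then controlled by applying the second-order ergodic rate result (the analogue of~\eqref{tclbasic2} for the duplicated generator $A^{(2)}$) to $S(x,y)=(x-y)^2$: $\mathbf{(C_s)}$ gives $0\le S\le-\alpha^{-1}A^{(2)}S$, the $\gamma$-weighted averages of $A^{(2)}S$ along $(X_{\Gamma_{k-1}},\bar X_{k-1})$ converge, after normalization by $\Gamma_n^{(2)}$, to $m(S)$, and a direct computation shows $m(S)=0$ because $S$ is quadratic and vanishes on the diagonal. This triangulation through $X$, and the identification $m(S)=0$, is the idea your outline is missing; without it (or a two-scheme analogue, which is not easier since the two grids differ), the isolation of the leading intra-step term $-h_1'\sigma\,S_{k,m}$ and hence the constant $\frac12(1-\frac1M)$ are not justified.

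For ${\cal N}(h_2)$ your route is genuinely different and, if anything, cleaner: the identity $(\Delta W_k)^2-\gamma_k=\sum_m\big((\Delta W_{k,m})^2-\tfrac{\gamma_k}{M}\big)+2\sum_mS_{k,m}\Delta W_{k,m}$ isolates the quadratic-variation mismatch between the two grids and produces $2(1-\tfrac1M)\int h_2^2d\nu$ from the interaction bracket alone, whereas the paper assembles the same constant from $\langle{\cal N}^1\rangle$, $\langle{\cal N}^{2,m}\rangle$ and the cross brackets $\langle{\cal N}^1,{\cal N}^{2,m}\rangle$. Your caution about the ${\cal M}$--${\cal N}$ cross term is also warranted: the paper dispatches it in one line via the vanishing third Gaussian moment, but in your own decomposition the interaction part of ${\cal N}$ is driven by the same sub-increments $\Delta W_{k,m}$ as ${\cal M}$, and the coefficient $h_1(\bar X_{k-1})-h_1(\bar Y_{M(k-1)+m})$ is correlated with $S_{k,m}$, so the cross bracket does not vanish identically and its negligibility at scale $\Gamma_n^{(2)}$ genuinely requires an argument that neither your outline nor the paper's one-sentence orthogonality claim spells out.
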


%

\begin{proof} $(a)$ Set $\varphi=h_1$. First, using that $\bar X$ and $\bar{Y}$ are built with the same Wiener increments, 
$$
{\langle {\cal M}(\varphi)\rangle_n}=\sum_{k=1}^n\frac{\gamma_k}{M} \sum_{m=0}^{M-1}\left(\varphi(\bar X_{k-1})-\varphi(\bar{Y}_{M(k-1)+m})\right)^2
$$
so that
$$
\frac{\langle {\cal M}(\varphi)\rangle_n}{\Gamma_n}=M\sum_{m=0}^{M-1}\hat{\nu}_n^{\gamma,m}(\hat \varphi^2)
$$
where $\hat{\nu}_n^{\gamma,m}(f)=\frac{1}{\Gamma_n}\sum_{k=1}^n \gamma_k f(\bar X_{k-1},\bar{Y}_{M(k-1)+m})$ and 
$\hat \varphi(x,y)=\varphi(x)-\varphi(y)$. With similar arguments as for the proof of Proposition~\ref{prop:convergence}, for every $m\in\{0,\ldots,M-1\}$, $(\hat{\nu}_n^{\gamma,m})_n$ converges $a.s.$ to the unique invariant distribution of the duplicated diffusion $\nu_\Delta$ (since Assumption $\mathbf{(C_w)}$ holds). By uniform integrability arguments, one can check that the convergence holds  along continuous functions with polynomial growth so that 
$$
\hat{\nu}_n^{\gamma,m}(\hat \varphi^2)\xrightarrow{n\rightarrow+\infty}\int \left(\varphi(x)-\varphi(y)\right)^2 \nu_\Delta(dx,dy)=0\quad a.s. 
$$
Again with uniform integrability arguments (using that $\sup_n\ES[|\bar X_n|^r]<+\infty$ for every positive $r$), one can check that $\ES[\hat{\nu}_n^{\gamma,m}(\hat \varphi^2)]\xrn{n\nrn}0$. It follows that 
$\ES\Big[\frac{\langle {\cal M}(\varphi)\rangle_n}{\Gamma_n}\Big]\xrn{n\nrn}0.$

\smallskip
\noindent $(b)$ The proof of this statement is the purpose of the end of the section. First, remark that the orthogonality of ${\cal M}(h_1)$ and  ${\cal N}(h_2)$ follows from independency of the increments of the Brownian motion and from the fact that 
for every $s<t$, $\ES[(W_t-W_s)((W_t-W_s)^2-(t-s))]=0$. Then, it remains to study these two martingales separately. In Lemma~\ref{lem:mdevarphi}, we go deeper in the study of the long run behavior of the martingale ${\cal M}(h_1)$ under Assumption $\mathbf{(C_s)}$ and in Lemma~\ref{lem:ndevarphi}, we investigate the one of the martingale ${\cal N}(h_2)$.
\end{proof}

\subsubsection{Long run behavior of ${\cal M}(\varphi)$ under strong confluence.} 
\begin{lemme}\label{lem:mdevarphi} Under the assumptions of Proposition~\ref{prop:contribmartmun}$(b)$, 
\[
\frac{1}{\Gamma_n^{(2)}}  \ES\left[{\cal M}_n(h_1)^2\right]\xrightarrow{n\rightarrow+\infty}
\frac 12 \Big(1-\frac{1}{ M}\Big)\int_{\R}(h_1'\sigma)^2d\nu.
\]
\end{lemme}

\begin{proof} We temporarily write $\varphi$ instead of $h_1$.

\smallskip
\noindent {\sc Step~1}: We decompose ${\cal M}(\varphi)$ as the sum of  terms involving the limiting diffusion process $X$:
\begin{eqnarray*}
{\cal M}(\varphi)&= & {\cal M}^{(1)} - \sum_{m=0}^{M-1} {\cal M}^{(2,m)}+\sum_{m=1}^{m-1}{\cal M}^{(3,m)}\\
%
\mbox{where}\qquad\qquad\qquad{\cal M}^{(1)}_n &=& \sum_{k=1}^n \Big(\varphi(\bar X_{k-1}) - \varphi(X_{\Gamma_{k-1}})\Big)\Delta W_{\Gamma_{k}},\\
{\cal M}^{(2,m)}_n &=& \sum_{k=1}^n \Big(\varphi(\bar Y_{M(k-1)+m}) - \varphi(X_{\Gamma_{k-1+\frac{m}{M}}})\Big)(W_{\Gamma_{k-1+\frac{m+1}{M}}}-W_{\Gamma_{k-1+\frac{m}{M}}}),\\
{\cal M}^{(3,m)}_n&=&  \sum_{k=1}^n \Big(  \varphi(X_{ \Gamma_{k-1+\frac{m}{M} }})-\varphi(X_{\Gamma_{k-1}})\Big) (W_{\Gamma_{k-1+\frac{m+1}{M} }}-W_{\Gamma_{k-1+\frac{m}{M} }}).
\end{eqnarray*}

We first deal with ${\cal M}^{(1)}$ whose predictable bracket  given by
\begin{eqnarray*}
\langle {\cal M}^{(1)}\rangle_n & \le & \sum_{k=1}^n \gamma_k \big(\varphi(\bar X_{k-1})-\varphi(X_{\Gamma_{k-1}})\big)^2\\
& \le & [\varphi]_{\rm Lip} \sum_{k=1}^n \gamma_k \big|\bar X_{k-1}- X_{\Gamma_{k-1}}\big|^2.
\end{eqnarray*}

Let $A^{(2)}$ be the infinitesimal generator of the duplicated diffusion $(X^x_t, X^{x'}_t)_{t\ge 0}$ and let us denote by $\widetilde{b}:\ER^d\times\ER^d\rightarrow\ER^d\times\ER^d$ and $\widetilde{\sigma}:\ER^d\times\ER^d\rightarrow\mathbb{M}_{2d,2q}$ the associated drift and diffusion coefficients. If we temporarily set $S(x,y)= (x-y)^2$, then 
\[
{A}^{(2)}S(x,y)= (b(x)-b(y))(x-y) +\frac 12\big(\sigma(x)-\sigma(y\big))^2.
\]
and  $\mathbf{(C_s)}$ reads, ${A}^{(2)}S \le -\alpha \, S$ or equivalently $ 0\le S \le - \frac{1}{\alpha} {A}^{(2)}S$. 

Now, by mimicking the proof of~\eqref{tclbasic2} (where the result has been established for functions of the Euler scheme alone), we get that, as soon as $\frac{\sqrt{\Gamma_n}}{\Gamma^{(2)}_n}\to 0$, for every smooth function $f:\ER^d\times\ER^d\rightarrow\ER$
\[
\frac{1}{\Gamma^{(2)}_n} \sum_{k=1}^n \gamma_k {A}^{(2)} f (X_{\Gamma_{k-1}}, \bar X_{k-1}) \stackrel{a.s.}{\longrightarrow} m(f)=\nu_\Delta\Big(\frac{1}{2} D^2 f(.).\widetilde{b(.)}^{\otimes 2}\Big)+\frac{1}{24} \ES\big[D^{(4)} f(.)(\sigma(.)U)^{\otimes 4}\big]
\]
where $U\sim{\cal N}(0,I_{q})$ and $\nu_{\Delta}$ is the image of $\nu$ on the diagonal of $\R^2$ (which is the unique invariant distribution of the duplicated diffusion).
Straightforward computations show that $m(S)=0$ since $\nabla S(x,y) = 2\begin{pmatrix}x-y\cr y-x\end{pmatrix}$, $D^{(2)}S(x,y) = 2\Big[\begin{array}{cc}1&-1\\-1&1\end{array}\Big]$ and $D^{(\ell)}S\equiv 0$, $\ell\ge 3$. Thus, taking advantage of the strong confluence, we derive that 
\[
\lim_n \frac{1}{\Gamma^{(2)}_n} \sum_{k=1}^n \gamma_k \big(X_{\Gamma_{k-1}}- \bar X_{k-1}\big)^2 \le -\frac{1}{\alpha} m(S)=0 \quad a.s. 
\]
%
%
%
Uniform integrability arguments imply that the above convergence also holds in $L^1$. Thus,
$$\ES\left[\frac{\langle {\cal M}^{(1)}\rangle_n }{\Gamma^{(2)}_n}\right]{\longrightarrow} \,0\quad\textnormal{ as $\quad n\to +\infty$}.$$ 

\smallskip
\noindent The same method of proof shows a similar result for ${\cal M}^{(2,m)}$, $m=0,\ldots,M-1$ (by considering the  scheme $(\bar Y_{M k+m})_{k\ge 0}$ and the filtration ${\cal G}_{k}^m= {\cal F}^W_{\Gamma_{k-1+\frac mM}}$). It follows that $  \ES\left[\frac{\langle {\cal M}^{(2,m)}\rangle_n }{\Gamma^{(2)}_n}\right]{\longrightarrow}\, 0$ as $n\to +\infty$.

\medskip
\noindent {\sc Step~2}: Now we deal with ${\cal M}^{(3,m)}$, $m=1,\ldots,M-1$.  First we compute the predictable bracket 
\[
\langle{\cal M}^{(3,m)}\rangle_n =\frac 1M\sum_{k=1}^n \gamma_k\big(\varphi(X_{ \Gamma_{k-1+\frac{m}{M} }})-\varphi(X_{\Gamma_{k-1}})\big)^2.
\]
Then, we decompose
\begin{align*}
\varphi(X_{ \Gamma_{k-1+\frac{m}{M} }})-&\varphi(X_{\Gamma_{k-1}})= \underbrace{\varphi'(X_{\Gamma_{k-1}})(X_{ \Gamma_{k-1+\frac{m}{M} } }-X_{\Gamma_{k-1}})}_{(a)_k}\\
&+ \underbrace{ \big(\varphi'(\Xi_{k-1})-\varphi'(X_{\Gamma_{k-1}})\big)(X_{ \Gamma_{k-1+\frac{m}{M} }}-X_{\Gamma_{k-1}})}_{(b)_k},\; \Xi_{k-1}\!\in (X_{\Gamma_{k-1}},X_{ \Gamma_{k-1+\frac{m}{M} } }).
\end{align*}
Let us deal first with $(b)_k$.  The function $\varphi''$ being with polynomial growth, there exists some positive $C$ and  $p$ such that for every $x$ and $y$ in $\ER^d$,
$$|\varphi'(x+y)-\varphi'(x)|\le C(1+|x|^p+|y|^p)|y|.$$

Thus, 
\[
  \frac{1}{\Gamma_n^{(2)} } \sum_{k=1}^n  \gamma_k (b)_k^2 \le \frac{C}{\Gamma_n^{(2)} }  \sum_{k=1}^n \gamma_k (X_{ \Gamma_{k-1+\frac{m}{M} } }-X_{\Gamma_{k-1}})^{4}(1+|X_{\Gamma_{k-1}})|^{2p})(1+|U_k|^{2p}).
\]
Using that $\sup_t\ES[| X_t^x|^r]<+\infty$, one easily checks that for every $r\ge 2$,
$$\sup_k\ES[|X_{ \Gamma_{k-1+\frac{m}{M} } }-X_{\Gamma_{k-1}}|^{r}]\le C\gamma_k^{\frac r2}$$ 
so that with the help of the Cauchy-Schwarz inequality,
$$\lim_n \frac{1}{\Gamma_n^{(2)} } \sum_{k=1}^n  \gamma_k \ES (b)_k^2 =0.$$
%
%

\smallskip For $(a)_k$ we write
\[
(a)_k= (\varphi'\sigma)\big(X_{\Gamma_{k-1}}\big)\big(W_{ \Gamma_{k-1+\frac{m}{M} }}-W_{\Gamma_{k-1}}\big) + (\widetilde a)_k
\]
where 
\[
 (\widetilde a)_k =  \varphi' \big(X_{\Gamma_{k-1}}\big)\left(\int_{\Gamma_{k-1}}^{ \Gamma_{k-1+\frac{m}{M} }}b(X_s)ds+ \int_{\Gamma_{k-1}}^{ \Gamma_{k-1+\frac{m}{M} }}\big(\sigma(X_s)-\sigma(X_{\Gamma_{k-1}})\big)dW_s\right).
\]

It is clear, owing  to Doob's Inequality,  that 
\[
\E  (\widetilde a)_k^2\le \|\varphi'\|_{\sup}^2\left({\gamma^2_k}  \sup_{t\ge 0} \E|b(X_t)|^2 +{\gamma_k}[\sigma]^2_{\rm Lip}\E \Big( \sup_{t\in [X_{\Gamma_{k-1}}, \Gamma_{k-\frac 12} )}|X_s-X_{\Gamma_{k-1}}|^2 \Big) \right)\le C_{b,\sigma,\varphi} \gamma^2_k. 
\]
Then  $\displaystyle   \frac{1}{\Gamma_n^{(2)} } \sum_{k=1}^n  \gamma_k (\widetilde a)_k^2\stackrel{L^1}{\longrightarrow} 0$ as above.

The last term of interest is again a martingale increment. We note that 
\[
\E \left((\varphi'\sigma)^2\big(X_{\Gamma_{k-1}}\big)\big(W_{ \Gamma_{k-1+\frac{m}{M} }}-W_{\Gamma_{k-1}}\big)^2\,|\, {\cal F}^W_{\Gamma_{k-1}}\right) = \frac{m\gamma_k}{M} (\varphi'\sigma)(X_{\Gamma_{k-1}})^2.
\]
The sequence $(\gamma_n, \gamma^2_n)_{n\ge 1}$ being averaging, 
\[
\frac{1}{\Gamma^{(2)}_n }\sum_{k=1}^n {\gamma^2_k} (\varphi'\sigma)(X_{\Gamma_{k-1}})^2\stackrel{a.s.}{\longrightarrow} \int_{\R} (\varphi'\sigma)^2d\nu\quad \mbox{ as }\quad n\to +\infty.
\]
Uniform integrability arguments imply that $\frac{1}{\Gamma^{(2)}_n }\sum_{k=1}^n {\gamma^2_k}\, \ES[(\varphi'\sigma)(X_{\Gamma_{k-1}})^2]{\longrightarrow} \int_{\R} (\varphi'\sigma)^2d\nu$
and one deduces that
$$  \ES\left[\frac{\langle {\cal M}^{(3,m)}\rangle_n }{\Gamma^{(2)}_n}\right]{\longrightarrow}\,\frac m{M^2} \int_{\R} (\varphi'\sigma)^2d\nu.$$
 The result then follows from the orthogonality of the martingales  ${\cal M}^{(3,m)}$, $m=1,\ldots,M-1$ (the fact that the  martingales ${\cal M}^1$ and ${\cal M}^{2,m}$ are negligible also implies by Schwarz's Inequality that so is  their cross product).
%
%
%
\end{proof}



\subsubsection{Long run  behavior of ${\cal N}(h_2)$.} 
We consider now the martingale
\begin{eqnarray*}
{\cal N}_n({h}_2) &= &{\cal N}^1_n-\sum_{m=0}^{M-1} {\cal N}^{2,m}\\
\mbox{where  }\hskip 3 cm  {\cal N}^1_n&=&  \sum_{k=1}^n h_2(\bar X_{k-1}) \big((W_{\Gamma_{k}}-W_{\Gamma_{k-1}})^2-\gamma_k\big)\\
{\cal N}^{2,m}_n &=&  \sum_{k=1}^n h_2(\bar Y_{M(k-1)+m})\big((W_{ \Gamma_{k-1+\frac{m+1}{M} }}-W_{ \Gamma_{k-1+\frac{m}{M} }})^2- \frac{\gamma_k}{M} \big).\hskip 1 cm
\end{eqnarray*}

\begin{lemme}\label{lem:ndevarphi} Under Assumptions of Proposition~\ref{prop:contribmartmun}$(b)$, 
\[
\frac{1}{\Gamma_n^{(2)}}  \ES\left[{\cal N}_n(h_2)^2\right]\xrightarrow{n\rightarrow+\infty}
2\left(1-\frac{1}{M}\right)\int_{\R}h_2^2d\nu.
\]
\end{lemme}
%
%
%
\begin{proof} Like in the previous proof, we write $\varphi$ instead of $h_2$. We focus on the asymptotic behavior of $\langle {\cal N} \rangle_n$.

\smallskip
\noindent  First, noting that for a random variable $Z\sim {\cal N}\big(0;1\big)$, $\E \big((Z^2-1)^2\big)=2$, we get since $(\gamma_n,\gamma^2_n)_{n\ge 1}$ is averaging,

\begin{equation}\label{eq:brack1}
\frac{\langle {\cal N}^1\rangle_n}{\Gamma^{(2)}_n} = \frac{2}{\Gamma^{(2)}_n}  \sum_{k=1}^n\gamma_k^2 \varphi^2(\bar X_{k-1})\longrightarrow2\int _{\ER} \varphi^2d\nu \quad  a.s. \quad  \mbox{ as }  \quad n\to +\infty.
\end{equation}
likewise one shows that for $m=0,\dots,M-1$ that  
\[
\frac{\langle {\cal N}^{2,m}\rangle_n}{\Gamma^{(2)}_n} \longrightarrow \frac 2{M^2}\int _{\ER} \varphi^2d\nu \quad a.s. \quad  \mbox{ as }  \quad n\to +\infty.
\]
By uniform integrability arguments, the above convergence extends to the expectations. Second, we focus on the ``slanted'' brackets.  Let us set $\Delta_{m,k}=(W_{ \Gamma_{k+\frac{m+1}{M} }}-W_{ \Gamma_{k+\frac{m}{M} }})^2- \gamma_k/M$.  Using the chaining rule for conditional expectations, we note that, for every $m\neq m'$,
\[
\ES_{k-1}\left(\varphi (\bar Y_{M(k-1)}+m)\varphi(\bar Y_{M(k-1)+m'})\Delta_{m,k-1}\Delta_{m',k-1}\right)=0
\]
so that $\langle {\cal N}^{2,m}, {\cal N}^{2,m'}\rangle_n\equiv 0$. 

\smallskip
\noindent Now, let us compute $\langle {\cal N}^1, {\cal N}^{2,m'}\rangle_n$ where $m\in\{0,\ldots,M-1\}$ and $({\cal N}^1, {\cal N}^{2,m'})$ is viewed as a couple of $({\cal F}_k)$-martingales.
Writing the increment $W_{\Gamma_k}-W_{\Gamma_{k-1}}$ as follows:
$$W_{\Gamma_k}-W_{\Gamma_{k-1}}=\big(W_{\Gamma_k}-W_{\Gamma_{k-1+\frac{m+1}{M}}}\big)+ \big(W_{\Gamma_{k-1+\frac{m+1}{M}}}-W_{\Gamma_{k-1+\frac{m}{M}}}\big)+\big(W_{\Gamma_{k-1+\frac{m}{M}}}-W_{\Gamma_{k-1}}\big)$$
and using some standard properties of the increments of the Brownian Motion, one can check that 
\[
\langle {\cal N}^1, {\cal N}^{2,m'}\rangle_n = \frac 2{M^2} \sum_{k=1}^n {\gamma^2_k}\varphi(\bar X_{k-1})\ES\left(\varphi(\bar Y_{M(k-1)+m})|{\cal F}_{k-1}\right).
\]
Using second order Taylor expansions of $\varphi$ between $\varphi(\bar Y_{M(k-1)+\ell-1})$  and  $\varphi(\bar Y_{M(k-1)+\ell})$ for $\ell=1,\ldots,m$, combined with the fact that $\sup_j\ES[|\bar Y_{j}|^r]<+\infty$ for every $r>0$, one derives 
\[
\langle {\cal N}^1, {\cal N}^{2,m'}\rangle_n = \frac 2{M^2} \sum_{k=1}^n {\gamma^2_k}\left(\varphi(\bar X_{k-1})\varphi(\bar Y_{M(k-1)})+O_{L^1}(\gamma_k)\right)=\frac{\Gamma_n^{(2)}}{2M^2}\hat{\nu}_n^{\gamma,\gamma^2}(\varphi\otimes\varphi)+O_{L^1}(\Gamma_n^{(3)})
\]
where $\hat{\nu}_n^{\gamma,\gamma^2}(f)=\frac{1}{\Gamma_n^{(2)}}\sum_{k=1}^n f(\bar X_{k-1},\bar Y_{M(k-1)})$. Thus, the sequence $(\hat{\nu}_n^{\gamma,\gamma^2})_n$ of empirical measures associated to  the duplicated diffusion~\eqref{eq:dupdif2} has a   unique invariant distribution  $\nu_\Delta$. By an adaptation of the proof of Proposition~\ref{prop:convergence}, it can thus be proved that
$$
\hat{\nu}_n^{\gamma,\gamma^2}(\varphi\otimes\varphi)\xrn{n\nrn}\nu_\Delta(\varphi\otimes\varphi)=\int\varphi^2d\nu.
$$
Once again, by a uniform integrability argument (and using what precedes), one obtains
\[
\frac{1}{\Gamma_n^{(2)}}\ES[\langle {\cal N}^1, {\cal N}^{2,m'}\rangle_n]\xrn{n\nrn}\frac 2{M^2}\int\varphi^2d\nu.
\]
As a conclusion of the previous convergences, one deduces that
\[
\frac{1}{\Gamma_n^{(2)}}\ES\left[\Big\langle {\cal N}^1-\sum_{m=0}^{M-1} {\cal N}^{2,m'}\Big\rangle_n\right]\xrn{n\nrn}\left(2+\frac{2}{M^2}(M-2M)\right)\int\varphi^2d\nu=2\left(1-\frac{1}{M}\right)\int\varphi^2d\nu.
\]
\end{proof}
\section{Proofs of the main theorems ($CLT$ and optimization)}\label{sec:proof3}
Owing to the results established in the previous sections, we are now in position to prove the three main results: Theorems~\ref{theo:CLT},~\ref{L2theo} and~\ref{thm:optimiz}. 
First keep in mind that in these theorems the step sequence reads $\gamma_n= \gamzero  n^{-a}$ for some $\gamzero >0$ and $a\!\in (0,1)$.

\subsection{\bf Proof of Theorem~\ref{theo:CLT}.}  We mainly detail the proof of Theorem~\ref{theo:CLT}$(b)$ and we will only give some elements of the ones of $(a)$ and $(c)$ (which are based on the same principle) at the end of this section. \smallskip

\noindent First, by~\eqref{defML2Rgodic}, one reminds that $\widetilde{\nu}_n^{(R,{\bf W})}$ is a linear combination of $ \nu_{n_1}$ and of $\mu_{n_r}^{(r,M)}$ with $n_r=\lfloor q_r n\rfloor$, $r=2,\ldots,R$, . For $\nu_{n_1}(f)$ and $\mu_{n_2}^{(2,M)}(f)$, 
we will make use of the expansions given in Propositions~\ref{prop:Multistep}$(b)$  and~\ref{prop:dvpcorrectiveterms}$(b)$ respectively. For  $\mu_{n_r}^{(r,M)}(f)$, $r=3,\dots,R$, as defined by~\eqref{eq:tildenunrr}, we apply Proposition~\ref{prop:dvpcorrectiveterms}$(b)$
with step sequence $(\gamma_n/M^{r-2})_{n\ge 1}$.  More precisely, by  \eqref{eq:Gammalr},
$$(M^{1-\ell}-1)\frac{\Gamma_{n_r}^{(\ell,r)}}{\Gamma_{n_r}^{(1,r)}}=m_{r,\ell}\frac{\Gamma_{n_r}^{(\ell)} }{\Gamma_{n_r}}\quad\textnormal{with   $\;m_{r,\ell}=(M^{1-\ell}-1)M^{-(r-2)(\ell-1)}$,}$$
so that by Proposition~\ref{prop:dvpcorrectiveterms}$(b)$, we have for every $r\in\{2,\ldots,R\}$,
\begin{align*}\nonumber \mu_{n_r}^{(r,M)}(f)-\sum_{\ell=2}^{\ERR}m_{r,\ell} \frac{\Gamma_{n_r}^{(\ell)} }{\Gamma_{n_r}}\nu(\Psi_{\ell}(f))&=&c_{_{R+1}}m_{r,R+1}\frac{\Gamma_{n_r}^{(\ERR+1)} }{\Gamma_{n_r}}
\label{eq:munrDL}
-\frac{M^{r-2}{\cal M}_{n_r}^{(r)}(\sigma g')}{\Gamma_{n_r}}+o_{L^2}\left(\frac{\sqrt{\Gamma_{n_r}}\vee \Gamma_{n_r}^{(R+1)}}{\Gamma_{n_r}}\right)
\end{align*}
where 
 $({\cal M}_{n_r}^{(r)})_{n\ge 1}$ is defined similarly to ${\cal M}_n$ but with the step sequence $(\gamma_n/M^{r-2})_{n\ge 1}$ In particular, $(\bar X_n,\bar Y_{M n+m})$ is now a couple of Euler schemes with step sequences $(\gamma_n/M^{r-2})_{n\ge 1}$ and $(\gamma_n/M^{r-1})_{n\ge 1}$ respectively.

 %
It follows from the expansions of {\em order $R+1$} of each term of $\widetilde{\nu}_n^{(R,{\bf W})}$ established  in Propositions~\ref{prop:Multistep}$(b)$  and~\ref{prop:dvpcorrectiveterms}$(b) $ respectively that 
\begin{equation}\label{roorzpopopa}
\begin{split}
\widetilde{\nu}_n^{(R,{\bf W})}(f)-\nu(f)&= \nu_{n_1}(f)-\nu(f)+\sum_{r=2}^R {\bf W}_r \mu_{n_r}^{(r,M)}(f)\\
&= c_{_{R+1}}\widetilde{\mathbf{W}}_{_{R+1}}\frac{\Gamma_{n}^{(R+1)} }{\Gamma_{n}}+\frac{M_{n_1}^{(1,g)}}{\Gamma_{n_1}}-\sum_{r=2}^R {\bf W}_r \frac{M^{r-2}{\cal M}_{n_r}^{(r)}(\sigma g')}{\Gamma_{n_r}}\\
&\quad+ {\rm Bias}^{(1)}\!(a,R,q,n)+{\rm Bias}^{(2)}\!(a,R,q,n)+o_{L^2}\left(\frac{\sqrt{\Gamma_{n}}\vee \Gamma_n^{(R+1)}}{\Gamma_{n}}\right)
\end{split}
\end{equation}
where ${\rm Bias}^{(1)}(a,R,q,n)$ is defined in Lemma~\ref{lem:bias1}$(b)$ and 
\begin{equation*}
{\rm Bias}^{(2)}(a,R,q,n) 
= c_{_{R+1}}  \Big[ {\mathbf W} _1
\left(\frac{\Gamma_{n_1}^{(\ERR+1)} }{\Gamma_{n_1}}-q_1^{-aR}\frac{\Gamma_{n}^{(R+1)}}{\Gamma_n}\right)
 +\sum_{r=2}^R {\mathbf W} _r m_{r,R+1}  \left(\frac{\Gamma_{n_r}^{(\ERR+1)} }{\Gamma_{n_r}}-q_r^{-aR}\frac{\Gamma_{n}^{(R+1)}}{\Gamma_n}\right)
 \Big].
\end{equation*}
By Lemma~\ref{lem:bias1}
\begin{equation}\label{sdjdoijdoi}
\Big|{\rm Bias}^{(1)}(a,R,q,n)\Big|+\Big|{\rm Bias}^{(2)}(a,R,q,n)\Big|\le \frac{C}{n^{1-a}}=o\left(\frac{1}{\sqrt{\Gamma_n}}\right).
\end{equation}

As concerns the martingale components, one deduces from Propositions~\ref{prop:contribmartnun}$(a)$ and~\ref{prop:contribmartmun}$(a)$ that 
 \[
{\sqrt{\Gamma_{n_1}}}  \left(\frac{M_{n_1}^{(g)}}{\Gamma_{n_1}}-\sum_{r=2}^R {\bf W}_r \frac{M^{r-2}{\cal M}_{n_r}^{(r)}(\sigma g')}{\Gamma_{n_r}}\right)\stackrel{(\R)}{\Longrightarrow}{\cal N}\Big(0;\int_{\R}(\sigma g')^2d\nu\Big).
\]
Theorem~\ref{theo:CLT}$(b)$ then follows by Slutsky Theorem and the following remarks:
$$\Gamma_{n_1}\overset{n\rightarrow+\infty}{\sim} \frac{\gamma_1q_1^{1-a}}{1-a} n^{1-a},\quad \Gamma_n^{(R+1)}{\Gamma_n}=\frac{1-a}{1-a(R+1)} \gamma_1^{R} n^{aR}$$
and that when $a=\frac{1}{2R+1}$,
$$ 1-a=2 aR=\frac{2R}{2R+1}\quad\textnormal{and} \quad \frac{1-a}{1-a(R+1)}=2.$$

For the proof of Theorem~\ref{theo:CLT}$(c)$, the only difference comes from the fact that the martingale component becomes negligible since $1-a>2aR$ when $a\in(0,(2R+1)^{-1})$ so that $(\widetilde{\nu}_n^{(R,{\bf W})})_{n\ge 1}$ converges in probability towards
$m_f(a,q,R)$. Finally,  the proof of Theorem~\ref{theo:CLT}$(a)$ follows the same lines but with the help of the expansions of  Propositions~\ref{prop:Multistep}$(a)$  and~\ref{prop:dvpcorrectiveterms}$(a)$.

\subsection{\bf Proof of Theorem~\ref{L2theo}.}
\noindent $(a)$ is an $L^2$-version of Theorem~\ref{theo:CLT}$(b)$ so that it relies on  the same decomposition. More precisely,  it is a direct consequence of \eqref{roorzpopopa} and
\eqref{sdjdoijdoi} combined with  Propositions~\ref{prop:contribmartnun}$(b)$ and~\ref{prop:contribmartmun}$(a)$.

\medskip
 \noindent 

\noindent Claim~$(b)$ is based on the (sharper) second expansions of Propositions~\ref{prop:Multistep}$(c)$  and~\ref{prop:dvpcorrectiveterms}$(c)$ up to order $R+2$. More precisely, using the same strategy as in \eqref{roorzpopopa}, one obtains

\begin{equation*}
\begin{split}
(\widetilde{\nu}_n^{(R,{\bf W})}(f)-&\nu(f))=  c_{_{R+1}}\widetilde{\mathbf{W}}_{_{R+1}}\frac{\Gamma_{n}^{(R+1)} }{\Gamma_{n}}+ c_{_{R+2}}\widetilde{\mathbf{W}}_{_{R+2}}\frac{\Gamma_{n}^{(R+2)} }{\Gamma_{n}}\\
&+\frac{M_{n_1}^{(1,g)}+N_{n_1}}{\Gamma_{n_1}}-\sum_{r=2}^R {\bf W}_r \frac{M^{r-2}\left({\cal M}_{n_r}^{(r)}(\sigma g')+{\cal N}_{n_r}^{(r)}(\frac{1}{2}\sigma^2 g'')\right)}{\Gamma_{n_r}}\\
&\quad+ \sum_{i=1}^3 {\rm Bias}^{(i)}\!(a,R,q,n)+\eta_n^{(1)}+
\eta_n^{(2)}
\end{split}
\end{equation*}
where $\widetilde{\mathbf{W}}_{_{R+2}}$ is defined by \eqref{eq:Wtilde} (and explicitly given by \eqref{eq:Wtilde2}), ${\rm Bias}^{(3)}$ is given by
\begin{equation*}
{\rm Bias}^{(3)}(a,R,q,n) 
= c_{_{R+2}}  \left[ {\mathbf W} _1
\left(\frac{\Gamma_{n_1}^{(\ERR+2)} }{\Gamma_{n_1}}-q_1^{-aR}\frac{\Gamma_{n}^{(R+2)}}{\Gamma_n}\right)
 +\sum_{r=2}^R {\mathbf W} _r m_{r,R+2}  \left(\frac{\Gamma_{n_r}^{(\ERR+2)} }{\Gamma_{n_r}}-q_r^{-aR}\frac{\Gamma_{n}^{(R+2)}}{\Gamma_n}\right)
 \right]
\end{equation*}
 and $\eta_n^{(1)}$ (resp. $\eta_n^{(2)}$) denotes a remainder term induced by the coarse level (resp. by the levels $r=2,\ldots,R$). By
 Propositions~\ref{prop:Multistep}$(c)$  and~\ref{prop:dvpcorrectiveterms}$(c)$, one obtains when $a=1/(2R+1)$,
 $$\|\eta_n^{(1)}\|_2=o(n^{-\frac{R+1}{2R+1}})\quad\textnormal{and}\quad \eta_n^{(2)}={\cal S}_n+o(n^{-\frac{R+1}{2R+1}}),$$
 where ${\cal S}_n$ is a centered random variable independent of $M_{n_1}^{(1,g)}$ and $N_{n_1}$ and such that $\ES[{\cal S}_n^2]=o(\frac{\Gamma_n^{(2)}}{\Gamma_n^2})=o(\frac{1}{n})$ (In fact, for $\eta_n^{(2)}$, one is slightly more precise than in Proposition~\ref{prop:dvpcorrectiveterms}$(c)$ by separating the martingale component and the bias component in the $o_{L_2}$).\smallskip

\noindent On the other hand, we obtain similarly to \eqref{sdjdoijdoi}:
\begin{equation*}
\sum_{i=1}^3|{\rm Bias}^{(i)}(a,R,q,n)|\le \frac{C}{n^{1-a}}=\frac{C}{n^{\frac{2R}{2R+1}}}.
\end{equation*}
With the help of these properties (and from the independence of the strata), we  deduce that 
\begin{align*}
\|(&\widetilde{\nu}_n^{(R,{\bf W})}(f)-\nu(f))\|_2^2=\left( c_{_{R+1}}\widetilde{\mathbf{W}}_{_{R+1}}\frac{\Gamma_{n}^{(R+1)} }{\Gamma_{n}}+ c_{_{R+2}}\widetilde{\mathbf{W}}_{_{R+2}}\frac{\Gamma_{n}^{(R+2)} }{\Gamma_{n}}\right)^2\\
&+\ES\left[\left(\frac{M_{n_1}^{(1,g)}+N_{n_1}}{\Gamma_{n_1}}\right)^2\right]+\sum_{r=2}^R{\bf W}_r^2 M^{2(r-2)} \ES\left[\left(\frac{{\cal M}_{n_r}^{(r)}(\sigma g')+{\cal N}_{n_r}^{(r)}(\frac{1}{2}\sigma^2 g'')}{\Gamma_{n_r}}\right)^2\right]+o\left(\frac{1}{n}\right).
\end{align*}
The result is then a consequence of  Propositions~\ref{prop:contribmartnun}$(c)$ and~\ref{prop:contribmartmun}$(c)$ combined with the following expansion available for any $\rho\in(0,1)$: $\sum_{k=1}^n k^{-\rho}= (1-\rho)^{-1} n^{1-\rho}+O(1)$ (see \eqref{eq:Gamma-a}). In particular, it is worth noting that  when $a=1/(2R+1)$,
$$\frac{\Gamma_n^{(R+1)}\Gamma_n^{(R+2)}}{\Gamma_n^2}\overset{n\rightarrow+\infty}\sim \frac{4R}{R-1}\frac{\gamzero^{2R+1}}{n},$$
which induces the rectangular term $\widetilde{m}_f(q,R)$.

 \subsection{Proof of Theorem~\ref{thm:optimiz}}
 {\sc Step~1}({\em Optimization of the   step parameter $\gamzero $}): This step is devoted to the optimization of the starting step $\gamzero $, in order  to {\em equalize the impact of the bias and of  the variance} in the first term of the expansion of the $MSE$ in~\eqref{eq:MSE}. It amounts to solving the elementary minimization problem
\[
\min_{\gamzero >0} \left[ \sigma^2_f(\varpi)+m^2_f(\varpi)=R^{\frac{2R}{2R+1}}\Big( \frac{2R}{2R+1}\sigma^2_1(f)\gamzero ^{-1} +4\gamzero ^{2R}  M^{-R(R-1)}c^2_{_{R+1}}\Big) \right].
\] 
\[
\] 
We rely on the following elementary lemma (whose proof is left to the reader).

\begin{lemme}Let $A,B,R>0$. Then,
\[
u^*:={\rm argmin}_{u>0}\left[Au^{-1} +B u^{2R}\right] =\left( \frac{A}{2RB}\right)^{\frac{1}{2R+1}}
\] 
and

\[
\min_{u>0}\left[Au^{-1} +B u^{2R}\right] =  (2R+1)B(u^*)^{2R}=A^{\frac{2R}{2R+1}}B^{\frac{1}{2R+1}} \big(2R\big)^{\frac{1}{2R+1}}\left(1+\frac{1}{2R}\right).
\]

\end{lemme}

\noindent Consequently, 
\[
\min_{\gamzero >0} \left[ \sigma^2_f(q,R)+m^2_f(q,R) \right] = \left(2^{\frac1R}R(2R+1)^{\frac{1}{2R}}M^{-\frac{R-1}{2}}\sigma_1^2(f)  |c_{_{R+1}}|^{\frac 1R}\right)^{\frac{2R}{2R+1}}
\]
attained at $\gamzero ^*= \gamzero ^*(R,M)$ given by 
\begin{equation}\label{eq:gamma_0}
\gamzero ^*= \Big(\frac{2R}{2R+1}\Big)^{\frac{1}{2R+1}}(8R)^{-\frac{1}{2R+1}}|c_{_{R+1}}|^{-\frac{2}{2R+1}}\sigma_1(f)^{\frac{2}{2R+1}}M^{\frac{R(R-1)}{2R+1}}.
\end{equation}

\smallskip
\noindent {\sc Step~2} ({\em Optimization of the size of the coarse level}): We introduce an auxiliary allocation parameter $\rho\!\in (0,1)$ to dispatch the target global $MSE$ $\varepsilon^2$  
so that the contribution of the first and the second  term in the right hand side of~\eqref{eq:MSE}   are $\rho \varepsilon^2$  and  $(1-\rho)\varepsilon^2$ respectively. The first of these two equalities reads 
\[
n^{-\frac{2R }{2R+1}}\left[ \sigma^2_f(\varpi)+m^2_f(\varpi)\right]\le  \rho \varepsilon^2
\]
where the step parameter $\gamzero = \gamma^*_0(R, M)$ is given by~\eqref{eq:gamma_0}. One straightforwardly derives that 
\begin{equation}\label{eq:N_opt}
n= n(\varepsilon, R, M,\rho) = \left\lceil\rho^{-(1+\frac{1}{2R})}\mu(R)  R\,\sigma_1^2(f)M^{-\frac{R-1}{2}}\varepsilon^{-2-\frac 1R}\right\rceil
\end{equation}
where 
\[
\mu(R)=2^{\frac 1R}(2R+1)^{\frac{1}{2R}}|c_{_{R+1}}|^{\frac 1R} \longrightarrow \widetilde c\quad\mbox{ as } \quad R\to+\infty.
\]

\noindent {\sc Step~3} (\!{\em Calibrating the depth $R$}): To calibrate $R=R(\varepsilon)$, we will now deal with the second term $\frac{\widetilde \sigma^2_f + \widetilde m_f}{n}$ of the $MSE$ expansion~\eqref{eq:MSE}. Since we have no clue on the sign of the residual bias term   $\widetilde m_f(\bar q,R,\gamzero )$,  we will replace it by its absolute value.  Moreover, we can plug in its formula the above expression~\eqref{eq:gamma_0} of the optimal step size $\gamzero ^*(R,M)$ which yields
\[
|\widetilde m_f(\bar q,R,\gamzero ^*)|= \mbox{\bf 1}_{\{c_{_{R+1}}\neq 0\}} \frac{|c_{_{R+2}}|}{|c_{_{R+1}}|} \frac{R}{R-1}\frac{1-M^{-R}}{1-M^{-1}}\sigma_1^2(f).
\]
Consequently, using the function $\mathbf{\Psi}$ introduced in~\eqref{eq:BoldPsi} and the obvious fact that $1-M^{-R}\le 1$,  this second  term will  be upper-bounded by $(1-\rho)\varepsilon^2 $ as soon as 
\begin{equation}\label{eq:OptTerm2}
\frac{R}{n(\varepsilon)}\left(\eta(f,R,M)\sigma_1^2(f)+\sigma^2_{2,1}(f)+\mathbf{\Psi}(M)R\left(1-\frac 1M\right)\sigma_{2,2}^2(f)\right)\le (1-\rho)\varepsilon^2 .
\end{equation}
where $\eta(f,R,M)= \mbox{\bf 1}_{\{c_{_{R+1}}\neq 0\}} \frac{|c_{_{R+2}}|}{|c_{_{R+1}}|} \frac{1}{(R-1)(1-M^{-1})}\to 0$ as $R\to+ \infty$ owing to the assumption made on the sequence $(c_r)_{r\ge 1}$.  

Given the expression obtained for $n(\varepsilon, R, M,\rho) $, this inequality is satisfied in turn as soon as 
\[
\sigma^2_{2,1}(f)+\mathbf{\Psi}(M)R\left(1-\frac 1M\right)\,\sigma_{2,2}^2(f) \le (1-\rho)  \rho^{-(1+\frac{1}{2R})}\mu(R) \, \sigma_1^2(f)M^{-\frac{R-1}{2}}\varepsilon^{-\frac 1R},
\]
or equivalently
\begin{equation}\label{eq:condrho}
\varepsilon^{\frac 1R} M^{\frac{R-1}{2}}R \le \frac{1-\rho}{\rho}\rho^{-\frac{1}{2R}}\frac{\mu(R) \theta_1(f)}{\left(1-\frac 1M\right)\mathbf{\Psi}(M)+R^{-1}\big(\theta_2(f)+\eta(f,R,M)\big)},
\end{equation}
where
\[
\theta_1(f)=\frac{\sigma_1^2(f)}{\sigma_{2,2}^2(f)} \quad \mbox{ and }\quad  \theta_2(f) =\frac{\sigma_{2,1}^2(f)}{\sigma_{2,2}^2(f)} 
\]
Note that under the assumptions made on the sequence $(c_r)_{r\ge 1}$, $\theta_3(f)$, In order to ensure the above condition,   we begin by rewriting the left-hand side as follows:
\begin{equation}\label{eq:logepsir}
 \varepsilon^{\frac 1R} M^{\frac{R-1}{2}}R=\exp\left( \frac{1}{R}\left(\frac{\log M}{2}R(R-1)+R\log R+\log {\varepsilon}\right)\right)
 \end{equation}
and will apply the next lemma with $\delta=(\log M)/2$ and $R=\lceil x(\varepsilon)\rceil$:
\begin{lemme} Let $\delta\!\in (0,+\infty)$. Then, for every  $\varepsilon\!\in (0,1]$, there exists a unique $x(\varepsilon)\!\in (1,+\infty)$ solution to
\[
\delta x(x-1)+x\log x +\log({\varepsilon})=0.
\]
The function $\varepsilon \mapsto x(\varepsilon)$ is increasing and satisfies  
\begin{equation}\label{eq:xeps1}
\lim_{\varepsilon\to 0} x(\varepsilon)=+\infty,\quad x(\varepsilon) \le \frac 12+ \sqrt{\frac{\log\big(\frac{1}{\varepsilon}\big)}{\delta}+\frac 14}
\end{equation}
 and 
\begin{equation}\label{eq:xeps2}
x(\varepsilon) = \sqrt{\frac{\log\big(\frac{1}{\varepsilon}\big)}{\delta}}  -\frac{\log_{(2)} \!\big(\frac{1}{\varepsilon}\big)}{4\delta} +\frac 12 +\frac{\log \delta}{4\delta}  
+O\left( \frac{ \log_{(2)}\!\big( 1/\varepsilon\big)   }{\sqrt{\log\big( 1/\varepsilon\big)}}\right)\quad \mbox{ as }\quad \varepsilon \to 0
\end{equation}
where $\log_{(2)} x = \log\log x$, $x>1$.
\end{lemme}

\noindent {\bf Proof.} The function $h : (\varepsilon,x)\mapsto \delta x(x-1)+x\log x +\log {\varepsilon} $ defined on $(0,1)\times [1,+\infty)$ is continuous,  increasing in both $\varepsilon$ and $x$, $h(\varepsilon,1)=  \log {\varepsilon} \le 0$ and $\lim_{x\to +\infty} h(\varepsilon,x)= +\infty$ which ensures the existence of a unique solution $x(\varepsilon)\!\in [1,+\infty)$ to the equation $h(\varepsilon,x)=0$. The monotony of $x(\varepsilon)$ follows from that of $h$. Its limit at infinity follows from the fact that $\lim_{\varepsilon\to 0}h(\varepsilon,x)=+\infty$ and  the inequality in~\eqref{eq:xeps1} is a consequence of the fact that $\delta x(\varepsilon)^2-\delta x(\varepsilon) - \log\big(\frac{1}{\varepsilon}\big)\le 0$ as $x(\varepsilon)\ge 1$.  For the expansion, we first note that $x(\varepsilon)$ satisfies
the second order equation
$$\delta x(\varepsilon)^2 +b x(\varepsilon) - \log\Big(\frac{1}{\varepsilon}\Big)= 0$$
with $b=\log \big(x(\varepsilon)/\alpha\big)$ where $ \alpha=\exp(\delta) $ so that
\begin{equation}\label{eq:aux0}
x(\varepsilon) = \sqrt{\frac{\log\big(\frac{1}{\varepsilon}\big)}{\delta}}\left(\sqrt{1+ \frac{(\log \big(x(\varepsilon)/\alpha\big))^2}{4\delta\log\big(\frac{1}{\varepsilon}\big)}}-\frac{\log \big(x(\varepsilon)/\alpha\big)}{2\sqrt{\delta\log\big(\frac{1}{\varepsilon}\big)} }\right).
\end{equation}
 We derive from the inequality in Equation~\eqref{eq:xeps1} that, for small enough $\varepsilon $,
\[
0\le \frac{\log \big(x(\varepsilon)/\alpha\big)}{\sqrt{\log\big(\frac{1}{\varepsilon}\big)}}= O\left(\frac{\log_{(2)}\!\big(1/\varepsilon\big) }{\sqrt{\log\big(1/\varepsilon\big)}}\right)=o(1)\quad \mbox{ as }\quad \varepsilon \to 0.
\]
Consequently, we derive from~\eqref{eq:aux0} that
\begin{equation}\label{eq:Expan0}
x(\varepsilon)=  \sqrt{\frac{\log\big(\frac{1}{\varepsilon}\big)}{\delta}}\left(1+ O\left(\frac{\log_{(2)}\!\big( 1/\varepsilon\big) }{\sqrt{\log\big( 1/\varepsilon\big)}}\right)\right)
\end{equation}
so that $\displaystyle \log  x(\varepsilon)  =\frac 12\left(\log_{(2)}\!\big(1/\varepsilon\big)-\log \delta\right)+  O\left(\frac{\log_{(2)}\!\big( 1/\varepsilon\big) }{\sqrt{\log\big( 1/\varepsilon\big)}}\right)$. 
Plugging this  back  into~\eqref{eq:Expan0} yields
\begin{eqnarray*}
x(\varepsilon)&= &  \sqrt{\frac{\log\big(\frac{1}{\varepsilon}\big)}{\delta}} \left(1-\frac{\log_{(2)} \!\big(\frac{1}{\varepsilon}\big)-\log \delta -2\, \delta}{4\sqrt{\delta\log \big(\frac{1}{\varepsilon}\big)} } +O\left( \frac{ \log_{(2)}\!\big( 1/\varepsilon\big)   }{\log\big( 1/\varepsilon\big)}\right) 
\right)\\
&=&  \sqrt{\frac{\log\big(\frac{1}{\varepsilon}\big)}{\delta}}  -\frac{\log_{(2)} \!\big(\frac{1}{\varepsilon}\big)}{4\delta}+\frac 12 +\frac{\log \delta}{4\delta}  +O\left( \frac{ \log_{(2)}\!\big( 1/\varepsilon\big)   }{\sqrt{\log\big( 1/\varepsilon\big)}}\right) .\quad \cqfd
\end{eqnarray*}

Now let  $x(\varepsilon,M)$ be the solution of the above equation where $\delta=\delta(M)= \frac{\log M}{2}$. We have

\[
 x(\varepsilon,M)=  \sqrt{\frac{2\log \big(\frac{1}{\varepsilon}\big)}{\log M}}
 -\frac{\log_{(2)}\! \big(\frac{1}{\varepsilon}\big)}{2\log M} +\frac 12 + \frac{\log(\log M)-\log2}{2\log M}+O\left( \frac{ \log_{(2)}\!\big( 1/\varepsilon\big)   }{\sqrt{\log\big( 1/\varepsilon\big)}}\right).
\]

Now, we set
\[
R(\varepsilon) = R(\varepsilon,M) = \lceil x(\varepsilon,M)\rceil.
\]
We derive  from the above lemma the following useful estimates for $R(\varepsilon)$:
$$
R(\varepsilon)\sim \sqrt{\frac{2\log \big(\frac{1}{\varepsilon}\big)}{\log M}}
\xrightarrow{\varepsilon\rightarrow0} +\infty\quad  \textnormal{and}\quad   R(\varepsilon) \le\frac 32+ \sqrt{\frac{2\log\big(\frac{1}{\varepsilon}\big)}{\log M}+\frac 14}.
$$

Now, it follows from the very definitions of $x(\varepsilon,M)$ and $R(\varepsilon)$  that
  $$
  h(\varepsilon, R(\varepsilon))\ge h(\varepsilon,x(\varepsilon,M))=0\ge h(\varepsilon,R(\varepsilon)-1).
  $$   
where $h$ is defined in the proof of the previous lemma.   Plugging these inequalities into~\eqref{eq:logepsir} yields 
\begin{equation}\label{eq:ineqR}
1\le \varepsilon^{\frac{1}{R(\varepsilon)}} M^{\frac{R(\varepsilon)-1}{2}}R(\varepsilon)\le M\left(1-\frac{1}{R(\varepsilon)}\right)^{-1+\frac{1}{R(\varepsilon)}} \left(\frac{R(\varepsilon)}{M}\right)^{\frac{1}{R(\varepsilon)}}.
\end{equation}
The above  inequality on the right implies  that~\eqref{eq:condrho} will be true as soon as   $\rho=\rho(\varepsilon,M) \!\in (0,1)$ satisfies
\[
\frac{1-\rho}{\rho}\rho^{-\frac{1}{2R(\varepsilon)}}\ge M\left(1-\frac{1}{R(\varepsilon)}\right)^{-1+\frac{1}{R(\varepsilon)}} \left(\frac{R(\varepsilon)}{M}\right)^{\frac{1}{R(\varepsilon)}}
\left(\frac{R^{-1}(\varepsilon)\theta_2(f)+\left(1-\frac 1M\right){\bf \Psi}(M)}{\mu(R(\varepsilon))\theta_1(f)}\right).
\]
In fact, one will try to saturate the  above condition, $i.e.$ to choose $\rho(\varepsilon,M) $ such that
\[
\frac{1-\rho(\varepsilon,M) }{\rho(\varepsilon,M) }\rho(\varepsilon,M) ^{-\frac{1}{2R(\varepsilon)}}= M\left(1-\frac{1}{R(\varepsilon)}\right)^{-1+\frac{1}{R(\varepsilon)}} \left(\frac{R(\varepsilon)}{M}\right)^{\frac{1}{R(\varepsilon)}}
\left(\frac{R(\varepsilon)^{-1}\theta_2(f)+\left(1-\frac 1M\right){\bf \Psi}(M)}{\mu(R(\varepsilon))\theta_1(f)}\right).
\]


As the function $\rho\mapsto \frac{1-\rho}{\rho}\rho^{-\frac{1}{2R(\varepsilon)}} $ is a decreasing homeomorphism from $(0,1) $ onto $(0,+\infty)$ this equation always has a solution $\rho=\rho(\varepsilon,M)$. Unfortunately it turns out to be  of little interest in its present form for practical implementation   since both  $\theta_i(f)$ are unknown.

\medskip
However,  as $R(\varepsilon)\rightarrow+\infty$ as $\varepsilon\rightarrow0$,
 and $\mu(R(\varepsilon)) \to  \widetilde c  $ as $\varepsilon \to 0$, we derive that  
\[
\frac{1-\rho(\varepsilon,M)}{\rho(\varepsilon,M)}\sim \frac{ (M-1) \mathbf{\Psi}(M)}{\widetilde c\, \theta_1(f)}
\quad i.e. \quad 
\rho(\varepsilon,M)\sim \frac{1}{1+ \frac{ {(M-1)}{\bf \Psi}(M)}{\widetilde c\, \theta_1(f)}} \quad \mbox{ as }\varepsilon \to 0.
\]

\noindent {\sc Step~4 }(\!{\em MSE, number of iterations and resulting complexity}):

\medskip
\noindent $\rhd$ Resulting $MSE$: From what precedes, we deduce that  after $n(\varepsilon, R(\varepsilon, M,\rho(\varepsilon))$ iterations, the MSE is lower than $\varepsilon^{-2}$.


\medskip
\noindent $\rhd$ Size: it follows from Equation~\eqref{eq:N_opt} in Step~1 combined with the  left inequality in Equation~\eqref{eq:ineqR} that
\begin{eqnarray*}
n(\varepsilon, R(\varepsilon), M,\rho(\varepsilon))& \sim &\left(1+ \frac{(M-1)\mathbf{\Psi}(M)}{\widetilde c\, \theta_1(f)}\right){\sigma_1^2(f)}\ \widetilde c\,  R(\varepsilon) \big(M^{\frac{R(\varepsilon)-1}{2}}\varepsilon^{\frac{1}{R(\varepsilon)}} \big)^{-1} \varepsilon^{-2} \\
&\precsim&\left(1+ \frac{(M-1)\mathbf{\Psi}(M)}{\widetilde c\, \theta_1(f)}\right){\sigma_1^2(f)}\widetilde c\,   R(\varepsilon)^2  \varepsilon^{-2} \\
&\sim&\frac{2}{\log M}\left({\widetilde c}\,+ \frac{(M-1)\mathbf{\Psi}(M)}{ \theta_1(f)}\right){\sigma_1^2(f)} \varepsilon^{-2}\log \Big(\frac{1}{\varepsilon}\Big)  \quad \mbox{ as }\varepsilon \to 0.
\end{eqnarray*}

\smallskip
\noindent $\rhd$ Complexity: Set $n(\varepsilon,M)= n(\varepsilon, R(\varepsilon), M,\rho(\varepsilon))$. The asymptotic resulting complexity satisfies
\[
K(n(\varepsilon,M),M) = n(\varepsilon,M)\Big(1+(M+1)\Big({R(\varepsilon)}-1\Big)\Big)\kappa_0
\sim  (M+1) R(\varepsilon)n(\varepsilon,M)\kappa_0 \quad \mbox{ as }\varepsilon \to 0
\]
 so that
\[
K(n(\varepsilon,M),M)\precsim \frac{2\kappa_0 (M+1)}{\log M}\left({\widetilde c}+ \frac{(M-1)\mathbf{\Psi}(M)}{ \theta_1(f)}\right){\sigma_1^2(f)} \varepsilon^{-2}\log \Big(\frac{1}{\varepsilon}\Big) \quad \mbox{ as }\varepsilon \to 0.
\]

\smallskip
\noindent $\rhd$ {\em  Initialization of the step:} it follows from~\eqref{eq:gamma_0}, the assumption made on $c_{_{R+1}}$  and the convergence of $R(\varepsilon)  \to +\infty$ that 
\begin{eqnarray*}
\gamma^*_1(\varepsilon)&\sim&  \widetilde c^{-1}M^{\frac{R(\varepsilon)}{2}}\,M^{-\frac 34}\quad \mbox{ as }\quad \varepsilon\to 0
\end{eqnarray*}
where we used that $\frac{R(R-1)}{2R+1}= \frac R2 -\frac 34 +\frac 34 \frac{1}{2R+1}$. Finally
using   the expression of $x(\varepsilon)$, we get 
\[
\gamma^*_1(\varepsilon)\sim  \widetilde c^{-1} \underbrace{M^{-\frac 34+\frac{\lceil x(\varepsilon,M)\rceil-x(\varepsilon,M)}{2}}}_{\in (M^{-\frac 14},M^{-\frac 34}]} \Big(\frac{\log M}{2}\Big)^{\frac 14}\exp{\Big(\sqrt{\frac{\log M\log\big(\frac{1}{\varepsilon}\big)}{2}}\Big)}  \Big(\log\big(\frac{1}{\varepsilon}\big)\Big)^{-\frac 14}\; \mbox{ as }\; \varepsilon\to 0.\quad \Box
\] 
%
%
%
%
\section{Numerical experiments}\label{sec:numerics}
\subsection{Practitioner's corner}\label{sec:practicorner}

In this section, we want to provide some helpful informations for some practical use of the optimized algorithm given in Theorem \ref{thm:optimiz}. Let  $\varepsilon>0$ denote the prescribed RMSE  and let $M$ be an integer greater than $2$. In what follows  we  aim at computing  $\nu(f)$ for a given function $f$ such that $f-\nu(f)$ is supposed to be a smooth enough coboundary.
 
  \paragraph{$\rhd$ The weights $\mathbf{W}^{(R)}_r)_{r=1,\ldots,R}$}. When the re-sizers are uniform they are computed by an instant closed form\eqref{eq:quniform}. Otherwise, they are given in full generality by  the $R$-tuple  of series~\eqref{eq:Wr} whose computation is also (almost)  instantaneous.  When $R=2,\,3$ one has again an instant closed form (see Examples below Lemma~\ref{lem:WWtilde}).

 \paragraph{$\rhd$ Computation of $R(\varepsilon,M)$.} We recall that $R(\varepsilon,M)=\lceil x(\varepsilon,M)\rceil$ where $x(\varepsilon,M)$ is the unique solution to $\frac{\log(M)}{2} x(x-1)+x\log x +\log({\varepsilon})=0$. For the computation of 
$x(\varepsilon,M)$, we use the classical (one-dimensional) zero search Newton algorithm. For standard values of $R$ and $M$, the reader may use Table \ref{valeursdexepsilon}. Finally, note that, ``though'' $\displaystyle R(\varepsilon)\sim \sqrt{\frac{2\log \big(1/\varepsilon\big)}{\log M}}$, one has   $\displaystyle \lim_{\varepsilon \to 0}R(\varepsilon)- \sqrt{\frac{2\log \big(1/ \varepsilon\big)}{\log M}}=-\infty$.

 \paragraph{$\rhd$ Values for $\mathbf{ \Psi}(M)$ and choice of $M$.} The quantity ${\bf \Psi}(M)$ appears in the size parameter $n(\varepsilon,M)$ (and in the complexity parameter $K(f,M)$  {given by~\eqref{eq:complexite}).
  Going back to the optimization procedure of the previous section, one remarks that for some fixed $R$ and $M$, one can replace $\mathbf{\Psi}(M)$ by $\frac{\Psi(R,M)}{R}$. This strategy leads to sharper bounds on the size parameter $n(\varepsilon,M)$ for a given RMSE  $\varepsilon$. We refer to  the first paragraph of Section \ref{subsec:numerics} for further investigations on this topic (see~\eqref{KepsRM} below and what precedes).  
Consequently, in Table~\ref{valeursdePsi}, we give some values  of ${\bf \Psi}(M)$,  but also of  $\frac{\Psi(R,M)}{R}$,  corresponding to some standard specifications  encountered in practical simulations. This also allows to check how $\frac{\Psi(R,M)}{R}$ varies for such low values of $R$ compared to ${\bf \Psi}(M)$. The conclusion is that ${\bf \Psi}(M)$ is an acceptable proxy of $\frac{\Psi(R,M)}{R}$.
 %
 %
\begin{table}[htbp]
\begin{center}
\begin{tabular}{|c|c|c|c|c|}
  \hline
  $\frac{\Psi(R,M)}{R}$ &  $R=2$ &  $R=3$ &   $R=4$ & $\mathbf{\Psi}(M)$ \\
  \hline   \hline
 $M=2$ &   2.133 &   2.591 &   2.674& 2.674 \\
\hline
  $M=3$ &  1.200 &    1.278 &   1.245 & 1.278 \\
\hline
  $M=4$ & 0.948 &   1.021  &  1.024 &  1.024\\
  \hline
\end{tabular}
\caption{\label{valeursdePsi} Values of $\Psi(R,M)$ and ${\bf \Psi}(M)$}
\end{center}
\end{table}

\paragraph{$\rhd$ Computation of  $n(\varepsilon,M)$.} The specification of  the size of the coarse level $n(\varepsilon,M)$ and, which is less important,  the {\em a priori} estimation of the global complexity, denoted $K(f,\varepsilon,M)$, both  require to estimate, at least theoretically, the parameters $\tilde{c}$, $\theta_1(f)$ and $\sigma_1^2(f)$. We will focus on their calibration in the next paragraph. To some extent, the estimation of $\theta_2(f)$ is less important and any way out of reach at a reasonable cost.

But even at this stage, it is inserting  to analyze their impact on $n(\varepsilon,M)$ in order to optimize the choice of the root $M$. To this end, we assume  for a moment that $C=\tilde{c}\,\theta_1(f)$ is known.   Going back to the sharper upper-bound of at our disposal, namely~\eqref{eq:complexite}, it suggests to minimize, for  fixed $C$,  the function
 \[
g_{_C}: M\longmapsto \frac{M+1}{\log M} \left (\frac{(M-1)\mathbf{\Psi}(M)}{C}+1\right).
\]
 Without going further, let us just note that $2{\bf \Psi}(3)\le {\bf \Psi}(2)$ so that $g_{_C}(3)\le g_{_C}(2)$ for any $C$ since $3/\log 2> 4/\log 3$ so that it seems that $M=3$ is always  a better choice than $M=2$.
But as emphasized in the next section~\ref{subsec:numerics} (first paragraph devoted to a ``toy'' Ornstein-Uhlenbeck setting), a sharper study of the complexity involving $\frac{\Psi(R,M)}{R}$ leads to temper the  answer. 

\paragraph{$\rhd$ Calibration of the parameters} 
{This calibration can be performed as a pre-processing phase based on a preliminary short Monte Carlo simulation, having in mind that only  rough estimates are needed.}
 
\smallskip
{
\noindent -- {\em Estimation of $\sigma_1^2(f)$ and $\theta_1(f)$}.  First, let us consider $\sigma_1^2(f)$. Through  an $L^2$-version  of~\eqref{tclbasic}, one deduces that 
for a family of independent random empirical measures  $(\nu_n^{(\ell)})_{\ell=1}^L$, namely
$$
\frac{1}{{\Gamma_n}}\sum_{\ell=1}^L\ES[(\nu_n^{(\ell)}(f)-\bar\nu_n^{(L)}(f))^2)]\xrightarrow{n\rightarrow+\infty}\sigma_1^2(f)\qquad\mbox{ as }\quad L,\, n\to +\infty
$$
where $\gamma_n=\gamzero  n^{-a}$ with $a> 1/3$ (say $a = \frac12$ in practice  to get rid  of the bias effect   even for small values of $n$) and $\bar\nu_n^{(L)}(f))=\frac 1L\sum_{\ell=1}^L\nu_n^{(\ell)}(f)$.
}

{As $\theta_1(f)=\frac{\sigma_1^2(f)}{\sigma_{2,2}^2(f)}$, it remains to provide an estimator of $\sigma_{2,2}^2(f)$. To do so we take advantage of the fact that $\sigma_{2,2}^2(f)$ is  the (normalized) asymptotic variance of $(\mu_n^{M,\gamma})_{n\ge1}$.  We thus may   use the same strategy as above. More precisely, under Assumption $\mathbf{(C_s)}$, we deduce from Propositions \ref{prop:dvpcorrectiveterms} and \ref{prop:contribmartmun} that 
 $$
 \frac{1}{\Gamma_n^{(2)}}\sum_{\ell=1}^L\ES[(\mu_n^{(\ell)}(f)-\bar\mu_n^{(L)}(f))^2)]\xrightarrow{n\rightarrow+\infty}\sigma_{2,2}^2(f)
 $$
if $\gamma_n=\gamzero  n^{-a}$ with $a>1/5$ (say $a = \frac14$ in practice  to get rid of  the bias effect   even for small values of $n$)  with $\bar\mu_n^{(L)}(f)=\frac 1L\sum_{\ell=1}^L\mu_n^{(\ell)}(f)$.
}

\smallskip
{\noindent --  {\em About  $\widetilde c$ and  $\theta_2(f)$}.  The coefficient $\widetilde c$ will probably always remain mysterious. On the other hand in practice what we really need is rather $|c_{_{R(\varepsilon)}}|^{\frac{1}{R(\varepsilon)}}$. However, under the assumption $\displaystyle \lim_{R\to +\infty} |c_{_R}|^{\frac 1R} = \widetilde c \!\in (0, +\infty)$ made on $c_{_R}$ in Theorem~\ref{thm:optimiz}, one can make the guess  from its very definition that its value is not too far from $1$ or is at least of order a few units. In particular, if  the coefficients $c_{_R}$ have  a polynomial growth or even $c_{_R} = O(\exp{|R|^{\vartheta_0}})$, $\vartheta_0\!\in [0,1)$, $\widetilde c =1$. If they have an exponential growth it remains finite (but possibly large). The point of interest is that, anyway, this value is much more stable than the first coefficient itself $c_1$ which would come out in a standard MLMC Langevin simulation framework (not investigated here).  
}

{The parameter $\theta_2(f)$ seems to be unaccessible as well, but for another reason: it is  the variance induced by a second order martingale. However as noticed in Section~\ref{subsec:numerics} (first paragraph), $\theta_2(f)$ is the {\em ratio of two variance terms} so that it seems not so much dependent on the magnitude of the diffusion coefficient (in fact it can be noted that the same property holds for $\theta_1(f)$).
 }
{\begin{Remarque} The numerical investigations of the next section show that the algorithm is very robust to the choice of the parameters. For simple practice, we thus recommend to get a rough estimation of $\sigma_1^2(f)$ and possibly of  $\theta_1(f)$    and to set  $\theta_2(f)=\widetilde{c}=1$.
\end{Remarque}
}

\subsection{Numerical tests}\label{subsec:numerics}
We propose in this section to provide some numerical tests of our algorithm.

\paragraph{Orstein-Uhlenbeck process: oracle and blind simulation.} We begin with the Ornstein-Uhlenbeck process in dimension $1$ solution to 
$$
dX_t=-\frac{1}{2} X_t dt+\sigma dW_t
$$
with $f(x)=x^2$.  We recall that this case is a toy example since whole the computations can be made explicit.
In particular, $\nu\sim{\cal N}(0,\sigma^2)$  so that $\nu(f)=\sigma^2$. Furthermore, $g(x)=x^2$ is
the unique solution (up to a constant) to the Poisson equation $f-\nu(f)=-{\cal L}g$   and it follows that 
$$\sigma_1^2(f)=\sigma_{2,2}^2(f)=4\sigma^4,\quad \textnormal{and}\quad\sigma_{2,1}^2(f)=5\sigma^4. $$
The reader can remark that in this case, the ratios $\theta_1(f)$ and $\theta_2(f)$ do not depend on $\sigma$. Even though this property can not be really 
generalized, it however emphasizes a stability of these parameters with respect to the variance of the model. 
The bias terms can also be computed: using that $\varphi_2(f)=\frac{1}{4} f$ and that $\varphi_\ell=0$ for $\ell\ge3$, we get   $c_{R+1}=\sigma^2/4^R$ (so that $\tilde{c}=1/4$).\smallskip

\noindent We want in this part to get a sharp estimate of the complexity for several choices of couples $(R,M)$. Following the optimization procedure, we go back to the definition of $n(\varepsilon, R, M,\rho)$ given in~\eqref{eq:N_opt}:
\begin{equation*}
n= n(\varepsilon, R, M,\rho) = \left\lceil\rho^{-(1+\frac{1}{2R})}\mu(R)  R\,\sigma_1^2(f)M^{-\frac{R-1}{2}}\varepsilon^{-2-\frac 1R}\right\rceil
\end{equation*}
and for each value of $R$ and $M$, we solve by a Newton method the following equation for $\rho\in[0,1]$:
\begin{equation}\label{eq:condrho}
\varepsilon^{\frac 1R} M^{\frac{R-1}{2}}R = \frac{1-\rho}{\rho}\rho^{-\frac{1}{2R}}\frac{\mu(R) \theta_1(f)}{R^{-1}\theta_2(f)+\left(1-\frac 1M\right)R^{-1}{\Psi}(R,M)}
\end{equation}
where the values of $\Psi(R,M)$ for $R,M=2,3,4$ are given in Table~\ref{valeursdePsi}.

We denote by $\rho^\star$ the solution of this equation. Then, the complexity $K(\varepsilon,M)$ (where we assume that $\kappa_0=1$) is given by
\begin{equation}\label{KepsRM}
K(\varepsilon,R,M)= \left(1+M\Big(1-\frac{1}{R}\Big)\right)n(\varepsilon, R, M,\rho^\star).
\end{equation}
This yields the following results for $\varepsilon=10^{-2}$:
\begin{table}[htbp]

\begin{minipage}[t]{.4\linewidth}
    \begin{tabular}{|c|c|c|c|}
    \hline
	$\sigma=1$&$R=2$&R=3&R=4\\
\hline
    \hline
    $M=2$ &  ${\bf 1.09*10^6}$&   $1.58*10^6$&  $2.55*10^6$\\
		 \hline
   $M=3$& $1.11*10^6$&    $1.43*10^6$&    $2.05*10^6$\\
	 \hline
   $M=4$ & $1.21*10^6$&    $1.57*10^6$&    $2.27*10^6$  \\
	 \hline
    \end{tabular}
\end{minipage}
\hskip 1.5cm 
\begin{minipage}[t]{.4\linewidth}
    \begin{tabular}{|c|c|c|c|}
    \hline
$\sigma=4$	&$R=2$&R=3&R=4\\
	\hline
   \hline
    $M=2$ &  ${ 7.02*10^8}$&   $5.23*10^8$&  $7.34*10^8$\\
		 \hline
   $M=3$& $7.17*10^8$&    ${\bf 4.76*10^8}$&    $6.10*10^8$\\
	 \hline
   $M=4$ & $7.56*10^8$&    $4.99*10^8$&    $6.55*10^8$  \\
	 \hline


    \end{tabular}
    \end{minipage}
     \caption{\label{complexexacte} $K(\varepsilon,R,M)$ for $\varepsilon=10^{-2}$}
\end{table}
On this example, we retrieve the property which says that $M=2$ is a good choice when $\tilde{c}\theta_1$ is small whereas $M=3$ can be greater when this quantity increases. However, as expected, the main parameter is the level $R$ of the method which increases when $\varepsilon\rightarrow0$. \smallskip

\noindent Taking only the first term of the expansion of the MSE for the crude procedure, the optimized complexity (with $\kappa_0=1$) for a MSE lower than $\varepsilon=10^{-2}$ is equal to $K(\varepsilon)=6.93*10^6$ and $K(\varepsilon)=1.77*10^9$  if $\sigma=1$ or $\sigma=4$ respectively.

 In Figure~\ref{fig1OU},
we compare numerically the evolution of {\bf   ML2Rgodic } with the crude algorithm for $\sigma=1$ and $\sigma=4$. Note that to obtain a rigorous comparison, the graphs are drawn in terms of the complexity, that  once again with a slight abuse of language, is the number of iterations of the Euler scheme involved by procedure.   
\begin{figure}[htbp]
   \begin{minipage}[c]{.46\linewidth}
      \includegraphics[width=8cm]{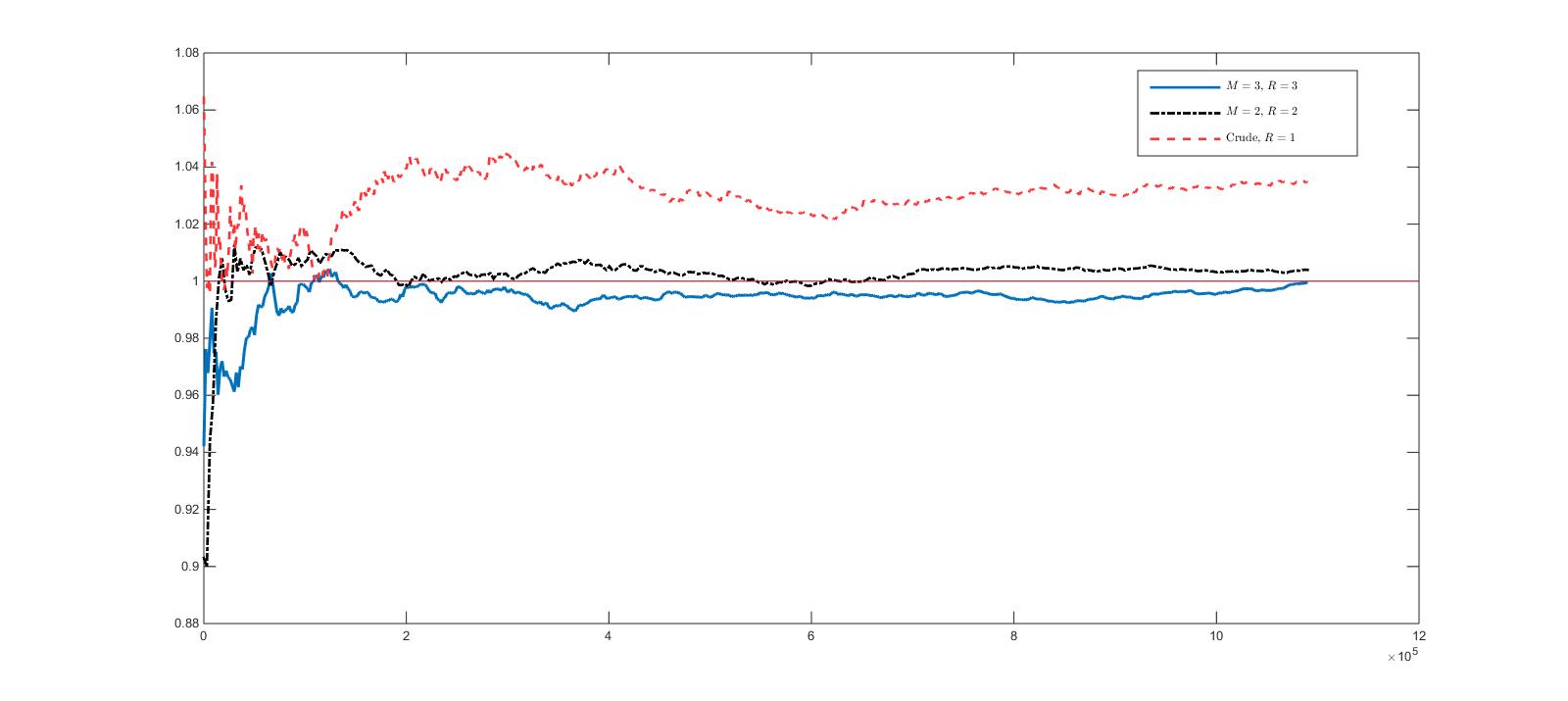}
   \end{minipage} \hfill
   \begin{minipage}[c]{.46\linewidth}
      \includegraphics[width=7.5cm]{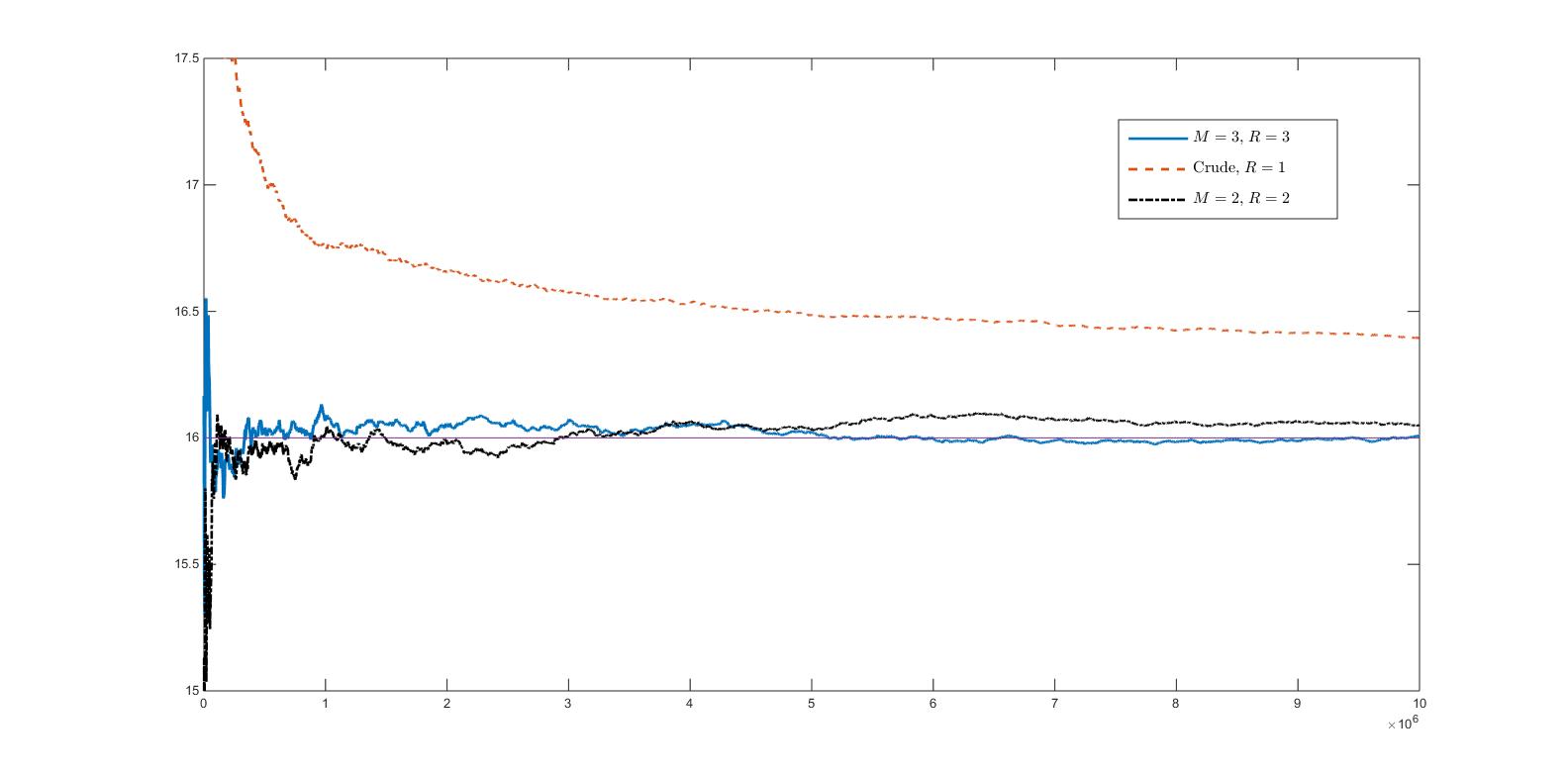}
   \end{minipage}
	\caption{\label{fig1OU} Comparison of the evolution in terms of the complexity of the ML2Rgodic with the crude algorithm}
\end{figure}
One remarks that the effect of the Multilevel-RR procedure is increased in the case $\sigma=4$ where the bias is larger. One also remarks in this case that, even though the algorithm is robust to the choice $M$ and $R$,  the best choice seems to be the one given in Table~\ref{complexexacte}.\smallskip

\noindent Of course, in practice, one can not make use of the exact parameters. As explained in Section~\ref{sec:practicorner}, it is possible to get a rough estimation of $\sigma_1^2(f)$ and $\theta_1(f)$ using the CLTS induced by the procedure. The coefficient $c_{R+1}$ can also be estimated but for this coefficient, this requires to use a Multistep method or the procedure ML2Rgodic itself with one more stratum than in the algorithm that we will implement after. Finally, the coefficient $\theta_2(f)$ seems to be impossible to estimate. This implies that the natural question that the practitioner may ask is: is it possible to get rid of the estimation of the above parameters ?\smallskip

\noindent To answer to this question, we propose in the case $\sigma=4$ to look at the dynamics of the procedure when we choose to fix
\begin{itemize}
\item $c_{R+1}=\theta_2(f)=1$ and to estimate  $\sigma_1^2(f)$ and $\theta_1(f)$,
\item $c_{R+1}=\theta_2(f)=\sigma_1^2(f)=\theta_1(f)$.
\end{itemize}
With these two choices of parameters and with $\varepsilon=10^{-2}$, we follow the procedure described in the previous section to estimate $\gamzero ^\star$, $R$, $\rho$ and $M$. Note that we again obtain $R=3$ and $M=3$ as an optimal choice. 
In Figure~\ref{figarticle3}, we thus compare the evolution of the previous method (with semi-estimated or not estimated) parameters and we can remark on this example that the algorithm seems to be very robust to the choice of the parameters.

\vskip -0.25cm
\begin{figure}[htbp]
\begin{center}
      \includegraphics[width=10cm]{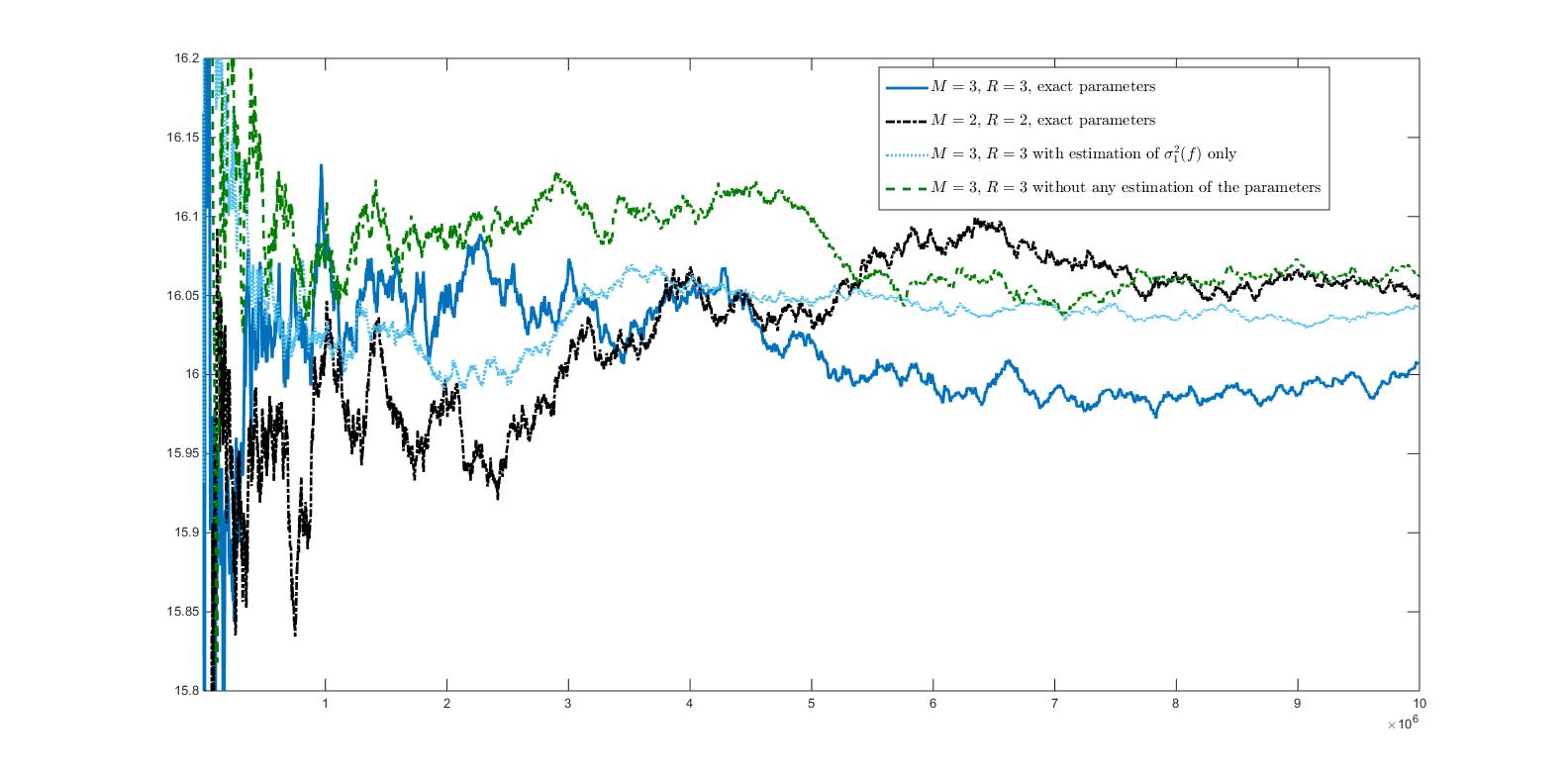}

\caption{\label{figarticle3} Orstein-Uhlenbeck process: Evolution of the algorithm in terms of the estimation of the parameters, Exact Value : $\nu(f)=16$.}
	\end{center}
\end{figure}

\vskip -0.25cm
%
%
\paragraph{Double-well potential.} We consider a second example in dimension $1$ 
$$
dX_t=-V_1'(X_t)dt+\sigma dW_t
$$
where $V_1(x)=x^2-\log(1+x^2)$ which is a non-convex potential (with two local minima in $-1$ and $1$) so that Assumption $\mathbf{(C_s)}$ is not fulfilled. However, Assumption $\mathbf{(C_w)}$ is true (see~\cite{LPPIHP}, Theorem 2.1). Let us also recall that the invariant distribution $\nu$ satisfies
$$
\nu(dx)=\frac{1}{Z_{V_1}}\exp\left(-\frac{V_1(x)}{2\sigma^2}\right)\lambda(dx)
$$
where $Z_{V_1}=\displaystyle \int_\ER \exp\left(-\frac{V_1(x)}{2\sigma^2}\right)\lambda(dx)$.

 We test the algorithm in this setting with $f(x)=x^2$ and $\sigma=2$. Figure~\ref{figarticle4} shows that ML2Rgodic is still efficient in this setting. The results are obtained using a rough estimation of $\sigma_1^2(f)$ and $\theta_1(f)$ and the other parameters are fixed to $1$. Once again, the evolution is compared with the crude algorithm with an optimized choice of $\gamzero ^*$ and the evolution is drawn as a function of the complexity. 

   \begin{figure}[htbp]
\begin{center}
      \includegraphics[width=10cm,height=6cm]{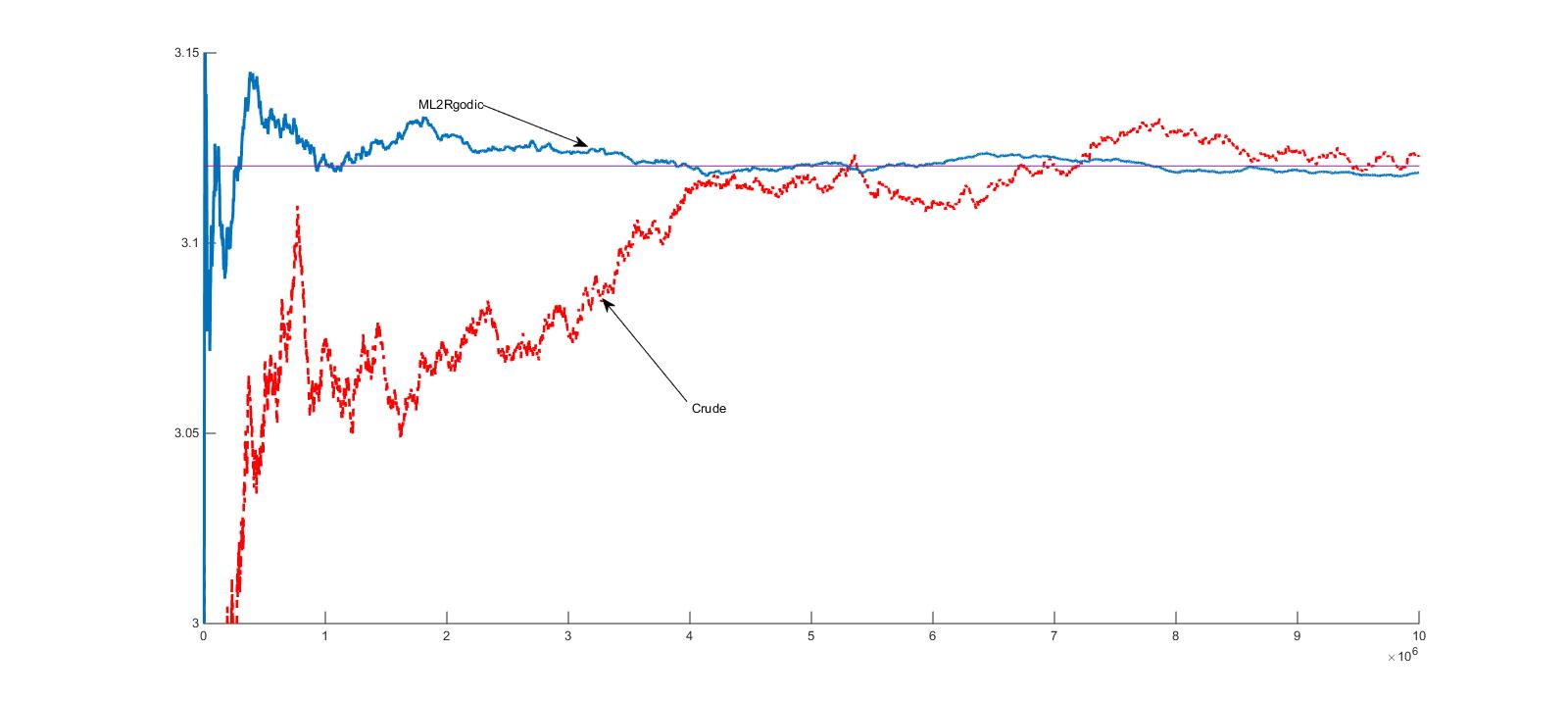}
   \vskip -0.5cm
	\caption{\label{figarticle4} Double-well potential: Approximation of $\nu(f)$ with $f(x)=x^2$, $\sigma=2$, Exact value : 3.1207.}
	\end{center}
\end{figure}

\paragraph{Statistical example (Sparse Regression Learning).} In~\cite{dalalyan-tsybakov}, the authors consider the problem of Sparse Regression Learning by Aggregation. For the sake of simplicity, we only recall here the case of linear regression: let $p$ denote the number of variables and $N$ the number of observations and suppose we are given $n$ couples of observations $({\bf X}_1,{\bf Y}_1)$, \ldots, $({\bf X}_N,{\bf Y}_N)$ where the vector ${\bf X}_i=(X_i^1,\ldots,X_i^p)$  is  the \textit{predictor} and the scalar ${\bf Y}_i$ is   the \textit{response}. Suppose that there exists $\theta_0\!\in\ER^p$ such that 
$$
\forall i\in\{1,\ldots,n\},\quad {\bf Y}_i={\bf X}_i\theta_0+\xi_i
$$
where $(\xi_i)_{i=1}^N$ denotes a sequence of $i.i.d.$ random variables with distribution ${\cal N}(0,\sigma^2)$ for a given (generally unknown) $\sigma>0$. Then, the classical question is: how to estimate $\theta_0$ ?
When $p\gg N$, the classical methods (such as the least-square method) do not work and it is necessary to introduce some alternative procedures.   The estimator of $\theta_0$ proposed by Dalalyan and Tsybakov -- called {\em EWA} (for Exponentially Weighted Aggregate)  -- is designed as follows:
$$ 
\hat{\theta}=\int _{\ER^p} \theta\,\pi_{V_2}(d\theta)
$$
where $\pi_{V_2}$ is the Gibbs probability measure defined by
$$
\pi_{V_2}(d\theta)=\frac{1}{Z_{V_2}}\exp\big(-V_2(\theta)\big)\lambda(d\theta)
$$
and    $Z_{V_2}$  is a normalizing coefficient and $V_2:\ER^p\mapsto\ER$ is  the potential defined for some given positive numbers $\alpha$, $\beta$ and $\tau$ by
$$
\forall \theta\in\ER^p,\quad V_2(\theta)=\frac{|\mathbf{Y}-\mathbf{X}\theta|^2}{\beta}+\sum_{j=1}^p \left(\log(\tau^2+\theta_j^2)+\omega(\alpha \theta_j) \right)
$$
with $\omega(\theta)=\theta^2\vee (2|\theta|-1)$.

As mentioned (and already numerically tested) in~\cite{dalalyan-tsybakov}, $\hat{\theta}$ is but  the expectation related to the invariant distribution of the following SDE 
$$
d\theta_t=-\nabla V_2(\theta_t) dt+\sqrt{2} dW_t.
$$
 It can subsequently be estimated through a Langevin Monte-Carlo procedure. The difficulty in this context is the fact that $p$ is potentially large so that the numerical computation needs some adaptations. More precisely,
in order to avoid an explosion of the Euler scheme, we need to impose the step to be not to large for small values of $n$. We thus assume in what follows that :
$$
\gamma_n=\min\Big(\frac{\gamma_1^\star}{ n^{a}},\frac{1}{p}\Big).
$$
Below, we test our   {\bf ML2Rgodic} estimator on a  \textit{compressed sensing} example given \cite{dalalyan-tsybakov} (see Example 1) with the parameters given in this paper. We fix~(\footnote{From a theoretical point of view, $\alpha$ should be a positive number such that $\alpha\le 1/(4p\tau)$.})
$$
\alpha=0,\quad \beta=4\sigma^2,\quad \tau=\frac{4\sigma}{{\rm Tr}({\bf X}^t{\bf X})^{\frac{1}{2}}}
$$
and the computations are achieved with  $p=500$, $N=100$ and $S=15$ where $S$ denotes the sparsity parameter, $i.e.$ the number $S$ of non-zero components of $\theta_0$  (of course we do not know which ones). 
Then, the matrices ${\bf X}$ and ${\bf Y}$ are generated  from simulated data as follows: in this compressed sensing setting, the matrix ${\bf X}$ has independent Rademacher entries with parameter $1/2$. The unknown $\theta_0$
is defined simply by $\theta_0(j)={\bf 1}_{j\le S}$, for every $j\in\{1,\ldots,p\}$. Finally, following again the parameters given in \cite{dalalyan-tsybakov}, we set $\sigma^2=S/9$.

\smallskip
 \noindent Denoting by $\hat{\theta}_n$ the approximation of $\hat{\theta}$ obtained after $n$ iterations of the scheme,  we depict in  Figure~\ref{figarticleEWA} the evolution of  $n\mapsto \|\hat{\theta}_n-\theta_0\|_2$. Note that $\|\hat{\theta}_n-\theta_0\|_2$ converges to $\|\hat{\theta}-\theta_0\|_2$ (which is not equal to $0$). We compare it with the crude procedure (taken with $a=1/3$ whereas for the ML2Rgodic procedure, $a=1/(2R+1)$ as usual). We can remark that the correction on the bias involved by the weighted Multilevel Langevin procedure strongly
improves the estimation of $\theta_0$. This remark is emphasized if we compare with the results of~\cite{dalalyan-tsybakov} based on an Euler scheme with constant step where the corresponding quantity is equal to $8.917$ (in this case, the constant step is about $(Np)^{-1}$).



\begin{figure}[htbp]
\begin{center}
      \includegraphics[width=10cm,height=5cm]{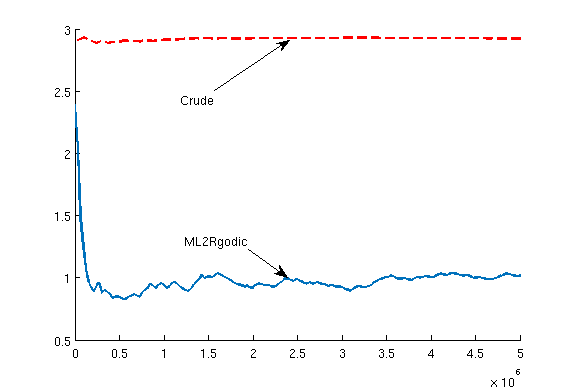}
   \vskip -0.5cm
	\caption{\label{figarticleEWA} Sparse Regression Learning: $n\mapsto \|\hat{\theta}_n-\theta_0\|_2$ for the Crude and  {\bf ML2Rgodic} ($R=3$) procedures.}
	\end{center}
\end{figure}

\bibliographystyle{alpha}
\small
\bibliography{bibli}

\small
\begin{appendix} 
\section{Proof of Lemma~\ref{lem:WWtilde}}\label{App:A}

Prior to the proof of Lemma~\ref{lem:WWtilde}, we need to prove this first technical lemma which will be used  to estimate in a precise way the coefficients $ \widetilde{{\mathbf W}}_{_{R+1}}$ and $\widetilde{{\mathbf W}}_{_{R+2}}$ involved in the asymptotic mean square error of the {\bf ML2Rgodic} estimator in Theorems~\ref{theo:CLT} and~\ref{L2theo}.

\begin{lemme} \label{lem:VanderStuf}Let $R\ge 2$ be an integer.and let   $(x_r)_{r=1,\ldots,R}$ be pairwise distinct real numbers. Then
the unique solution  $(y_r)_{r=1,\dots,R}$ to the   solution to the $R\times R$-Vandermonde system 
$$
\sum_{r=1}^{R}x_r^{\ell-1}y_r= c^{\ell-1},\;\ell=1,\ldots,R,
$$
\begin{eqnarray}
\label{eq:Vanderstuff00}
\mbox{is given by 
}\hskip 5cm  y_r &=&  \frac{\prod_{s=1, s\neq r}^{R}(x_r-c)}{\prod_{s=1, s\neq r}^{R}(x_r-x_s)}.\hskip 2cm
\\
\label{eq:Vanderstuff01}
 \mbox{Moreover }\hskip 4,25cm \sum_{r=1}^{R} y_r x_r^{R}&=& c^{R}-\prod_{r=1}^{R} (c-x_r) \hskip 2cm \\
\label{eq:Vanderstuff02}\mbox{ and }\hskip 5cm   \sum_{r=1}^{R} y_r x_r^{R}&=& c^R+\left(\sum_{r=1}^{R}x_r+c\right) \prod_{r=1}^{R} (c-x_r).\hskip 2cm
\end{eqnarray}
\end{lemme}

\begin{proof} The  above Vandermonde system ${\rm Vand}( x_r, r=1:R) \mathbf{w} = [0^{\ell-1}]_{\ell=1,R}$ can be explicitly solved by the Cramer formulas since its right hand side is of the form $[c^{\ell-1}]_{1\le \ell\le R}$ for some $c\!\in \R$. Namely
\[
y_r = \frac{{\rm det(Vand}(x_1, \ldots, x_{r-1}, c,x_{r+1}, \ldots, x_{_{R}}\mbox{)})}{{\rm det(Vand}( x_s, s=1:R\mbox{)})}, \; r=1,\ldots,R
\]
(the  column vector $[c^{\ell-1}]_{1\le \ell\le R}$ replaces the $r^{th}$ column of the original Vandermonde matrix).   Then, elementary computations show that it yields  the  announced  solutions.

To compute the next two sums, we start from the following canonical  decomposition of the rational fraction 
\[
\frac{1}{\prod_{r=1}^{R}(X-\frac{1}{x_r-c})} = \sum_{r=1}^{R} \frac{1}{(X-\frac{1}{x_r-c})\prod_{s\neq r}(\frac{1}{x_r-c}-\frac{1}{x_s-c})}.
\]
Setting $X=0$ yields after elementary computations
\[
\sum_{r=1}^{R}y_r(x_r-c)^{R} = (-1)^{R} \prod_{r=1}^{R} (x_r-c).
\]
Now, using that $(y_r)_{r=1,\ldots,R}$ solves the above Vandermonde system, we get 
\begin{eqnarray*}
\sum_{r=1}^{R}y_r(x_r-c)^{R}  &= &\sum_{r=1}^{R} y_r\sum_{k=0}^{R} \begin{pmatrix}R\cr k\end{pmatrix} (-1)^{R-k}x_r^kc^{R-k}\\
&=& \sum_{k=0}^{R}\begin{pmatrix}R\cr k\end{pmatrix} (-1)^{R-k} c^{R-k}
\underbrace{\sum_{r=1}^{R}y_rx_r^k }_{=\,c^{k}\mbox{\footnotesize if } k<R} =  \sum_{r=1}^{R}y_r x_r^{R} +c^{R}\big(\big(1-1\big)^{R}-1\big)
\end{eqnarray*}
so that
\[
\sum_{r=1}^{R} y_r x_r^{R} =c^{R}-  (-1)^{R}\prod_{r=1}^{R} (x_r-c)=c^{R}-\prod_{r=1}^{R} (c-x_r).
\]
The second identity follows likewise by differentiating the above rational fraction with respect to $X$ and then setting $X=0$ again.
\end{proof}

\noindent {\bf Proof of Lemma~\ref{lem:WWtilde}.} $(a)$ 
We introduce  the auxiliary variables and parameters 
\begin{equation}\label{eq:VanderR-1}
\overline {\mathbf W} _r = \left( \frac{q_1}{q_{r+1}}\right)^a \frac{W_{r+1}}{ M^{r-1 }},\; \quad x_r =  M^{-(r-1 )}\left(\frac{q_1}{q_{r+1}}\right)^a,\;r=1,\ldots,R-1.
\end{equation}
Then $(\mathbf W _r )_{1\le r\le R-1}$  is solution to the system~\eqref{eq:theSystem}  if and only if $(\overline {\mathbf W} _r )_{1\le r\le R-1}$  is solution to 
\[
\sum_{r=1}^{R-1} \overline {\mathbf W} _r x_r^{\ell-1} =  
\frac{1}{1- M^{-\ell}},\; \ell=1,\ldots,R-1.
\]
Expanding $\displaystyle \frac{1}{1- M^{-\ell}}=\sum_{k\ge 0}\frac{1}{ M^k}\frac{1}{ M^{k(\ell-1)} }$ yields by  linearity of  the above system that it suffices to solve   the sequence of $(R-1)\times(R-1)$-Vandermonde systems. 
\[
({\cal V}_k)\equiv \;\sum_{r=1}^{R-1} \overline {\mathbf W} _{k,r}  x_r^{\ell-1}=  M^{-k(\ell-1)}, \; \ell=1,\ldots,R-1, \;\; k\ge 0.
\]
As the $x_r$ are pairwise distinct,  $({\cal V}_k)$ has a unique  solutions  given by 
\[
\overline {\mathbf W}_{k,r} = \prod_{s=1,s\neq r}^{R-1}\frac{x_s- M^{-k}}{x_s-x_r},\; r=1,\ldots,R-1.
\]
with the usual convention $\prod_{\emptyset} =1$ Consequently, for every  $r=2,\ldots,R$, 
\[
\overline {\mathbf W} _r = \sum_{k\ge 0} \frac{1}{ M^k} \overline {\mathbf W} _{k,r} =\sum_{k\ge 0} \frac{1}{ M^k} \prod_{s=1,s\neq r}^{R-1}\frac{x_s- M^{-k}}{x_s-x_r},\; r=1,\ldots,R-1.
\]
Coming back to the weights of interest  finally yields the expected formula.

One derives   from  the definition~\eqref{eq:Wtilde} of $\widetilde{{\mathbf W}}_{_{R+1}}$, using the auxiliary variables, that
\[
\widetilde{{\mathbf W}}_{_{R+1}}=  q_1^{-aR}\big(1+( M^{-R}-1)\widetilde{\overline{\mathbf W}}_{_R}\big)
\quad\mbox{ with }\quad
\widetilde{\overline{\mathbf W}}_{_R}=\sum_{r=1}^{R-1}\overline{\mathbf W}_r  x_r^{R-1}
\]
and the  $x_r$ are given by~\eqref{eq:VanderR-1}. Following the lines of~$(a)$, we derive that 
\[
\widetilde{\overline{\mathbf W}}_{_R}=\sum_{k\ge 0}  \frac{1}{ M^k}\widetilde{\ \overline{\mathbf W}}_{R,k}
\]
where  the identity~\eqref{eq:Vanderstuff01}  established in the above  lemma~\ref{lem:VanderStuf} yields
 
\[
\widetilde{\overline{\mathbf W}}_{R,k}=  M^{-k(R-1)} - \prod_{r=1}^{R-1}( M^{-k}-x_r).
\]
Finally 
\[
\widetilde{{\mathbf W}}_{_{R+1}}= q_1^{-aR}\left(1+( M^{-R}-1)\sum_{k\ge 0}\frac{1}{ M^{kR}}\Big(1- \prod_{r=0 }^{R-2 }\Big(1- M^{k-r}\Big(\frac{q_1}{q_{r+2 }}\Big)^a\Big)\right).
\]
Noting that $\sum_{k\ge 0}\frac{1}{ M^{kR}} = \frac{1}{1- M^{-R}}$ completes the proof this claim.  The computation of $\widetilde{\mathbf{W}}_{_{R+2}}$ follows likewise, starting from the identity~\eqref{eq:Vanderstuff02}.

%
\smallskip
\noindent $(b)$  In the the starting system~\eqref{eq:theSystem} for the weights $q_r^{a(\ell-1)}$ no longer depends on $r$ and can be cancelled in each equation. This leads to the system
\[
{\mathbf W}_1=1,\quad  1 +( M^{-(\ell-1)}-1)\sum_{r=2}^R  M^{-(r-2 )(\ell-1)}{\mathbf W}_r=0,\; \ell=2,\ldots,R.
\]
After a standard Abel transform, we get that ${\mathbf W}_r= \mathbf{w}_{r} +\cdots+ \mathbf{w}_{_R}$ where the $\mathbf{w}_r$ are solution to the Vandermonde system
\[
\sum_{r=1}^R  M^{-(r-1 )(\ell-1)}\mathbf{w}_r = 0^{\ell-1},\; \ell=1, \ldots,R.
\]
Note that these weights corresponds to those coming out  when dealing with $ML2R$ for regular Monte Carlo (see~\cite{LEPA}) under a weak error expansion condition at rate $\alpha=1$.

As for the boundedness,  first note that the ``small'' weights $\mathbf{w} _r$ read $\mathbf{w}_r= b_{R_-r}/a_r$, $r=1,\ldots,R$, with
\[
a_r = \prod_{k=1}^r(1-M^{-k}) \quad\mbox{ and }\quad b_r = (-1)^r M^{-\frac{r(r-1)}{2}} a^{-1}_r.
\]
One straightforwardly checks that $a_r \downarrow a_\infty =   \prod_{k\ge 1}(1-M^{-k})>0 $ and $B_{\infty}=\sum_{r\ge 1} |b_r|<+\infty$. As a consequence
\[
\forall\, R\!\in \EN^*  , \; \forall\, r\!\in \{1,\ldots,R\},\quad |\mathbf{W}^{(R)}_r| \le \frac{B_{\infty}}{a_{\infty}}<+\infty.
\]

Finally, the same Abel transform shows that 
\[
\widetilde{\mathbf W}_{_{R+i}}= R^{a(R+i)}\sum_{r=1}^R  M^{-(r-1 )(R+i-1)}\mathbf{w}_r ,\; i=1,2, 
\]
and one concludes by formula~\eqref{eq:Vanderstuff01} and~\eqref{eq:Vanderstuff02} from Lemma~\ref{lem:VanderStuf}. 
$\quad\cqfd$
\section{An additional bias term}
In this part of the appendix, we focus the bias induced by the approximation
$$\frac{\Gamma_{n_r}^{(\ell)}}{\Gamma_{n_r}}\approx q_r^{-a\ell}\frac{\Gamma_n^{(\ell)}}{\Gamma_n}\quad\textnormal{(with $\gamma_n=\gamzero n^{-a}$, $a\in(0,1)$)},$$
that we use to build some universal weights $({\bf W}_r^{(R)})_{r=1,\ldots,R}$ (by universal, we mean that they do not depend on $n$). We have the following lemma:
\begin{lemme}\label{lem:bias1} Assume that $\gamma_n=\gamzero n^{-a}$ with $a\in(0,1)$. 
 
\smallskip
\noindent $(a)$ Let $\chi\in(0,1)$ and $L\in\EN$ such that $La<1$. Then, for every $n\ge n_0=\lceil \frac{6^{\frac{1}{1-a}}}{\chi}\rceil$,
 \begin{eqnarray}
\nonumber \left|\frac{\Gamma^{(\ell)}_{\lfloor \chi n\rfloor}}{\Gamma_{\lfloor \chi n\rfloor}}-\chi ^{-a(\ell-1)}\frac{\Gamma^{(\ell)}_{n}}{\Gamma_n}\right|&\le& 3\Big(1+\frac{1-a}{1-a(R+1)}\Big) \frac{\gamzero ^{\ell-1}}{n^{1-a}}\frac{\chi ^{-a\ell} }{\chi ^{1-a}-3n^{a-1}} \\
\label{Gamma-a-ell}&\le& \left(6\,\frac{2-aL}{1-aL}\gamzero ^{\ell-1}\chi ^{-1-a(\ell-1)}\right)\frac{1}{n^{1-a}}  .
 \end{eqnarray}
\noindent $(b)$ Set 
\begin{equation*}
{\rm Bias}^{(1)}\!(a,R,q,n)= \sum_{\ell=2}^{\ERR}\left[\!\Big[\frac{\Gamma_{n_1}^{(\ell)}}{\Gamma_{n_1}}-q_1^{-a(\ell-1)}\frac{\Gamma_{n}^{(\ell)}}{\Gamma_n}\Big]\mathbf{W}_1  \! +\! \sum_{r=2}^{R}m_{r,\ell} {\bf W}_r\Big[\frac{\Gamma_{n_r}^{(\ell)} }{\Gamma_{n_r}}-q_r^{-a(\ell-1)}\frac{\Gamma_{n}^{(\ell)} }{\Gamma_{n}}\Big]\right]c_\ell
\end{equation*}
where $m_{r,\ell}=(M^{-(\ell-1)}\!\!-1)M^{-(r-2)(\ell-1)}$. We have:
 \begin{equation*}
\nonumber  |{\rm Bias}^{(1)}\!(a,R,q,n)|\le \frac{C_{a,{\bf q}, r}}{n^{1-a}},
 \end{equation*}
 where
 $$
 C_{a,q,r}= 6\,\frac{2-a(R+1)}{1-a(R+1)}  \|\mathbf{W}\|_{\infty}\,q_*^{-1} \sum_{\ell=2}^R (\gamzero q_*^{-a})^{\ell-1}\left[1+\sum_{r=2}^R m_{r,\ell}\right]\,|c_{\ell}|
 $$
  with $q_*= \min_{1\le r\le R} q_r$ and $\|\mathbf{W}\|_{\infty}=\sup_{r\in\{1\ldots,R\},R\ge2} {\bf W}_r^{(R)}$. 
  
  Furthermore, if $q_1=\ldots=q_R=\frac{1}{R}$, then ${\rm Bias}^{(1)}\!(a,R,q,n)=0$.
 \end{lemme}
\begin{Remarque}\label{Rem:5.10} Note that since $a<1/2$, $n^{1-a}=o(n^{-\frac{1}{2}})$ so that this term is negligible at the first and second orders of the expansions obtained in this paper. Finally, it is worth noting that
 this term 
is equal to $0$ when the $q_i$ are equal to $\frac{1}{R}$, case where, in addition, the ${\bf W}_r$, $r=1\ldots,R$ have a simple closed form given by~\eqref{eq:quniform0} and ~\eqref{eq:quniform} in~Lemma~\ref{lem:WWtilde}.
\end{Remarque}
\begin{proof}
First, we derive by a comparison argument   with integrals $\int_0^n x^{-a}dx$ and $\int_1^{n+1}x^{-a}dx$ that 
\begin{equation}\label{eq:Gamma-a}
\frac{n^{1-a}-2}{1-a}\le \sum_{k=1}^n k^{-a} \le \frac{n^{1-a}}{1-a}, \; n\ge 1, a\!\in (0,1).
\end{equation}

Elementary computations then show that, for every $a\!\in \big(0, \frac{1}{R}\big)$, $\chi\!\in (0,1)$,  and every $n\ge 1$, every integer $\ell\!\in\{ 1,\ldots,R+1\}$  
\[
\left|\frac{\Gamma^{(\ell)}_{\lfloor \chi n\rfloor}}{\Gamma_{\lfloor \chi n\rfloor}}-\chi ^{-a(\ell-1)}\frac{\Gamma^{(\ell)}_{n}}{\Gamma_n}\right|\le \frac{3\gamzero ^{\ell}}{\Gamma_{\lfloor \chi n\rfloor}}\left(\frac{1}{1-a\ell}+ \frac{\chi^{-a\ell}}{1-a}\right)
\]
Using that $u\mapsto u^{1-a}$ is $(1-a)$-H\"older, we derive from the left inequality in~\eqref{eq:Gamma-a} that  $\Gamma_{\lfloor \chi n\rfloor}\ge \gamzero \frac{(\chi n)^{1-a}-3}{1-a}$ so that, for every $n\ge \frac{6^{\frac{1}{1-a}}}{\chi}$,  
 \begin{eqnarray}
\nonumber \left|\frac{\Gamma^{(\ell)}_{\lfloor \chi n\rfloor}}{\Gamma_{\lfloor \chi n\rfloor}}-\chi ^{-a(\ell-1)}\frac{\Gamma^{(\ell)}_{n}}{\Gamma_n}\right|&\le& 3\Big(1+\frac{1-a}{1-a(R+1)}\Big) \frac{\gamzero ^{\ell-1}}{n^{1-a}}\frac{\chi ^{-a\ell} }{\chi ^{1-a}-3n^{a-1}} \\
\label{Gamma-a-ell}&\le& 6\,\frac{2-a(R+1)}{1-a(R+1)}\frac{\gamzero ^{\ell-1}}{n^{1-a}} \chi ^{-1-a(\ell-1)} .
 \end{eqnarray}
 
 \noindent  Now, since $\|\mathbf{W}\|_{\infty}<+\infty$ (see Lemma~\ref{lem:WWtilde}$(b)$), we deduce by  plugging the above inequality in ${\rm Bias}^{(1)}\!(a,R,q,n)$ that, for every  $n\ge \frac{6^{\frac{1}{1-a}}}{q_*}$, 
 \begin{eqnarray}
\nonumber  |{\rm Bias}^{(1)}\!(a,R,q,n)| &\le &   6\,\frac{2-a(R+1)}{1-a(R+1)}  \frac{1 }{n^{1-a}} \|\mathbf{W}\|_{\infty}\,q_*^{-1} \sum_{\ell=2}^R (\gamzero q_*^{-a})^{\ell-1}\left[1+\sum_{r=2}^R m_{r,\ell}\right]\,|c_{\ell}|.
 \end{eqnarray}
 When $q_r = \frac1R$, $r=1,\ldots,R$,   
 \[
 {\rm Bias}^{(1)}\!(a,R,q,n)= \sum_{\ell=2}^{\ERR}\!\left[\frac{\Gamma_{n_1}^{(\ell)}}{\Gamma_{n_1}}-\bar q_1^{-a(\ell-1)}\frac{\Gamma_{n}^{(\ell)}}{\Gamma_n}\right]\left(\mathbf{W}_1  \! +\sum_{r=2}^{R} {\bf W}_r m_{r,\ell} \right)c_\ell = 0
 \]
since $\mathbf{W}$ is solution to~\eqref{eq:theSystem}. 
\end{proof}

\end{appendix}
\end{document}